\newcommand{\axref}[1]{(\hyperref[ax:#1]{#1})}
\newcommand{\newref}[4][]{
\ifthenelse{\equal{#1}{}}{\newtheorem{h#2}[hthm]{#4}}{\newtheorem{h#2}{#4}[#1]}
\expandafter\newcommand\csname r#2\endcsname[1]{#3~\ref{#2:##1}}
\expandafter\newcommand\csname R#2\endcsname[1]{#4~\ref{#2:##1}}
\expandafter\newcommand\csname n#2\endcsname[1]{\ref{#2:##1}}
\newenvironmentx{#2}[2][1=,2=]{
\ifthenelse{\equal{##2}{}}{\begin{h#2}}{\begin{h#2}[##2]}
\ifthenelse{\equal{##1}{}}{}{\label{#2:##1}}
}{\end{h#2}}
}
\theoremstyle{definition}
\theoremstyle{remark}
\newcommand{\bDelta}{b \Delta}
\newcommand{\fs}[1]{\mathrm{#1}}
\newcommand{\cat}[1]{\mathbf{#1}}
\newcommand{\scat}[1]{\mathcal{#1}}
\newcommand{\Fib}{\mathcal{F}}
\newcommand{\sSet}{\cat{sSet}}
\newcommand{\Hom}{\fs{Hom}}
\newcommand{\Id}{\fs{Id}}
\newcommand{\id}{\fs{id}}
\newcommand{\class}[2]{#1\text{-}\mathrm{#2}}
\newcommand{\J}{\mathrm{J}}
\newcommand{\Icof}[1][\I]{\class{#1}{cof}}
\newcommand{\Icell}[1][\I]{\class{#1}{cell}}
\newcommand{\Iinj}[1][\I]{\class{#1}{inj}}
\numberwithin{figure}{section}
\newcommand{\pb}[1][dr]{\save*!/#1-1.2pc/#1:(-1,1)@^{|-}\restore}
\newcommand{\po}[1][dr]{\save*!/#1+1.2pc/#1:(1,-1)@^{|-}\restore}
\begin{document}

\title{Contextually indexed contextual categories}

\author{Valery Isaev}

\begin{abstract}
In this paper, we define a generalization of indexed categories and contextual categories which we call contextually indexed (contextual) categories.
While contextual categories are models of ordinary type theories, contextually indexed (contextual) categories are models of indexed type theories.
We also define type-theoretic semi-fibration categories which generalize type-theoretic fibration categories.
Every model category in which cofibrations are stable under pullbacks is a type-theoretic semi-fibration category.
We show that type-theoretic semi-fibration category gives rise to a contextually indexed contextual category with finite limits.
Finally, we prove that the category of simplicial sets with the Joyal model structure gives rise to a locally small Cartesian closed contextually indexed contextual category with indexed limits and colimits.
\end{abstract}

\maketitle

\section{Introduction}

Indexed categories were defined in \cite{indexed-cats} (see also \cite[B1]{elephant}).
We defined an analogue of this notion using the language of indexed type theories which were defined in \cite{indexed-tt} as certain algebraic theories.
A contextually indexed contextual category is a model of such a theory.
We will show how to construct such a model from a category with fibrations and some additional structure.
Our construction is based on constructions from \cite{shul-inv,local-universes} which were defined for ordinary type theory.

We use the local universes construction to make constructions stable under substitution and reindexing.
To get an unstable interpretation, we use the notion of a type-theoretic semi-fibration category.
This is a category with fibrations with additional structure which allows us to interpret unit types, $\Sigma$-types, and identity types.
We do not require that all objects of a type-theoretic semi-fibration category are fibrant and we also do not require right properness.
The reason is that right properness implies that corresponding contextually indexed category is locally Cartesian closed and we want to work with models which are not.
Without right properness we cannot require all objects to be
We do not require all objects to be fibrant since we need to work with objects of the form $\Pi_p(q)$, where $p$ and $q$ are fibrations, and such an object is fibrant only if the category is right proper.

We also show how to construct extension types, products, and stable and unstable colimits.
Extension types were defined in \cite{riehl-dhott}.
They generalize $\Pi$-types and identity types.
We use them to give a description of pushouts and other higher inductive types which is based on constructions from \cite{lum-shul-hits}.
To construct unstable pushouts (and other colimits), we need to modify this argument.
We proved in \cite{indexed-tt} that the existence of pushouts in the empty context implies the existence of unstable pushouts.
A minor modification of the construction from \cite{lum-shul-hits} gives us pushouts in the empty context.

We will prove that localizations of model categories are closed under identity types.
This implies that if identity types are extensional in a model category, then they are also extensional in its localizations.
This gives us many examples of contextually indexed contextual categories with finite limits since extensionality of identity types implies the existence of equalizers as was proved in \cite{indexed-tt}.

Finally, we give one example of a type-theoretic semi-fibration category: namely, the category of simplicial sets with the Joyal model structure.
We will show that it gives rise to a locally small Cartesian closed contextually indexed contextual category with dependent limits and colimits.
To prove that it is Cartesian closed and has pushouts in the empty context, we define a functor $G : \sSet \to \sSet$.
Every edge in a simplicial set of the form $G(X)$ is invertible.
This implies that if $X$ is a quasicategory, then $G(X)$ is a Kan complex.
In general, $G(X)$ might not be a Kan complex, but it has many useful properties of Kan complexes.
For example, every categorical fibration of the form $Y \to G(X)$ is a Cartesian fibration and every map of the form $G(Y) \to G(X)$ factors into a categorical trivial cofibration followed by a Kan fibration.

The paper is organized as follows.
In section~\ref{sec:cicc}, we define contextually indexed contextual categories and show how to construct such categories from categories with fibrations.
In section~\ref{sec:ttsfc}, we define type-theoretic semi-fibration categories and prove that corresponding contextually indexed contextual categories have unit types, $\Sigma$-types, and identity types.
In section~\ref{sec:ext}, we discuss extension types.
In section~\ref{sec:colimits}, we construct dependent coproducts, pushouts, binary coproducts, and initial types.
In section~\ref{sec:loc}, we show that localizations of model categories are closed under identity types.
In section~\ref{sec:quasicats}, we discuss the contextually indexed contextual category constructed from quasicategories.

\section{Contextually indexed contextual categories}
\label{sec:cicc}

In this section, we define contextually indexed contextual categories and type-theoretic semi-fibration categories and show how to construct the former from the latter.

\subsection{Definition}

Indexed type theories are defined in \cite{indexed-tt} as certain essentially algebraic theory.
Thus, we have a notion of a model of such a theory.
We will call models of indexed unary (resp., dependent) type theories \emph{contextually indexed categories} (resp., \emph{contextually indexed contextual categories}).
We can also define them explicitly.
A contextually indexed category is a contextual category $\scat{B}$ together with a category indexed over $\scat{B}$, that is a functor $\scat{B}^\fs{op} \to \cat{Cat}$.
A contextually indexed contextual category is a contextual category $\scat{B}$ together with a contextual category indexed over $\scat{B}$, that is a functor $\scat{B}^\fs{op} \to \cat{ConCat}$.

One class of contextually indexed contextual categories was already defined in \cite{indexed-tt}.
An indexed type theory can be interpreted in an appropriate homotopy type theory.
This implies that there is a forgetful functor $U$ from the category of contextual categories to the category of contextually indexed contextual categories.
If $M$ is a contextual category, then $U(M)$ will be called \emph{the canonical indexing of $M$ over itself}.
The underlying contextual category of $U(M)$ is indeed $M$ itself.

We will use the terminology introduced in \cite{indexed-tt}, so let us recall a few relevant definitions.
A contextually indexed category $\scat{C} : \scat{B}^\fs{op} \to \cat{Cat}$ is \emph{locally small} if, for every context $\Gamma$ of $\scat{B}$ and every pair of objects $A,B \in \scat{C}(\Gamma)$,
there is a type $\Hom(A,B)$ over $\Gamma$ of maps between $A$ and $B$.
To be more precise, $\Hom(A,B)$ is a type equipped with a bijection between the set of terms of this type and the set of maps between $A$ and $B$ stable under reindexing.
Syntactically, local smallness is defined by the following rules:
\begin{center}
\AxiomC{$\Gamma \mid \cdot \vdash A$}
\AxiomC{$\Gamma \mid \cdot \vdash B$}
\BinaryInfC{$\Gamma \vdash \Hom(A,B)$}
\DisplayProof
\qquad
\AxiomC{$\Gamma \mid x : A \vdash b : B$}
\UnaryInfC{$\Gamma \vdash \lambda x.\,b : \Hom(A,B)$}
\DisplayProof
\end{center}
\medskip

\begin{center}
\AxiomC{$\Gamma \vdash f : \Hom(A,B)$}
\AxiomC{$\Gamma \mid \Delta \vdash a : A$}
\BinaryInfC{$\Gamma \mid \Delta \vdash f\,a : B$}
\DisplayProof
\end{center}

\begin{align*}
(\lambda x.\,b)\,a & = b[a/x] \\
\lambda x.\,f\,x & = f
\end{align*}

If $\scat{C}$ is a contextually indexed contextual category, then there is a stronger version of $\Hom$-types which we call \emph{dependent $\Hom$-types}.
If $B$ is an indexed type in an indexed context $\Delta$ over a base context $\Gamma$,
then a dependent $\Hom$-type $\Hom(\Delta.B)$ is a type over $\Gamma$ such that there is a bijection between the set of its terms and the set of terms of $B$ stable under reindexing.

We will say that a contextually indexed contextual category $\scat{C} : \scat{B}^\fs{op} \to \cat{ConCat}$ has some type-theoretic construction (such as $\Sigma$-types, $\Pi$-types, pushouts, and so on)
if contextual category $\scat{C}(\Gamma)$ has this construction for every context $\Gamma$ of $\scat{B}$ and this construction is stable under reindexing.
Sometimes $\scat{C}$ has some construction, but it is not stable under substitution in $\scat{C}(\Gamma)$.
In this case, we will say that $\scat{C}$ has the \emph{unstable} version of this construction.
For example, many contextually indexed contextual categories have pushouts which are not stable.
Note that we always assume that constructions are stable under reindexing, that is substitution of the base terms.

We will say that a contextually indexed contextual category has \emph{dependent products} if the following rules can be interpreted in it:
\begin{center}
\AxiomC{$\Gamma, i : I \mid \Delta \vdash B$}
\RightLabel{, $i \notin \mathrm{FV}(\Delta)$}
\UnaryInfC{$\Gamma \mid \Delta \vdash \prod_{i : I} B$}
\DisplayProof
\qquad
\AxiomC{$\Gamma, i : I \mid \Delta \vdash b : B$}
\RightLabel{, $i \notin \mathrm{FV}(\Delta)$}
\UnaryInfC{$\Gamma \mid \Delta \vdash \lambda i.\,b : \prod_{i : I} B$}
\DisplayProof
\end{center}
\medskip

\begin{center}
\AxiomC{$\Gamma \mid \Delta \vdash f : \prod_{i : I} B$}
\AxiomC{$\Gamma \vdash j : I$}
\BinaryInfC{$\Gamma \mid \Delta \vdash f\,j : B[j/i]$}
\DisplayProof
\end{center}

\begin{align*}
(\lambda i.\,b)\,j & = b[j/i] \\
\lambda i.\,f\,i & = f
\end{align*}

We will say that a contextually indexed contextual category has \emph{unstable dependent coproducts} if the following rules can be interpreted in it:
\begin{center}
\AxiomC{$\Gamma, i : I \mid \Delta \vdash B_i$}
\RightLabel{, $i \notin \mathrm{FV}(\Delta)$}
\UnaryInfC{$\Gamma \mid \Delta \vdash \coprod_{i : I} B_i$}
\DisplayProof
\qquad
\AxiomC{$\Gamma \vdash j : I$}
\UnaryInfC{$\Gamma \mid \Delta, x : B_j \vdash \fs{in}_j(x) : \coprod_{i : I} B_i$}
\DisplayProof
\end{center}
\medskip

\begin{center}
\AxiomC{$\Gamma \mid \Delta, z : \coprod_{i : I} B \vdash D$}
\AxiomC{$\Gamma, i : I \mid \Delta, x : B \vdash d : D[\fs{in}_i(x)/z]$}
\BinaryInfC{$\Gamma \mid \Delta, z : \coprod_{i : I} B \vdash \coprod\text{-}\fs{elim}(z.D, i x.d) : D$}
\DisplayProof
\end{center}
\medskip

\[ \coprod\text{-}\fs{elim}(z.D, i x.d)[\fs{in}_j(x)[b/x]/z] = d[j/i,b/x] \]

We will say that dependent coproducts are \emph{stable} if these rules are stable under substitution.
If coproducts are stable, then we can replace $\fs{in}$ with the following rule:
\begin{center}
\AxiomC{$\Gamma \vdash j : I$}
\AxiomC{$\Gamma \mid \Delta \vdash b : B_j$}
\BinaryInfC{$\Gamma \mid \Delta \vdash (j,b) : \coprod_{i : I} B_i$}
\DisplayProof
\end{center}
\medskip

\subsection{Change of base}

Let $F$ be a functor between underlying categories of contextual categories $\scat{B}'$ and $\scat{B}$.
If $\scat{C}$ is a contextual category over $\scat{B}$, then \emph{change of base} of $\scat{C}$ along $F$ is defined as $\scat{C} \circ F^\fs{op}$ and denoted by $F^*(\scat{C})$.
If $F$ and $G$ are isomorphic functors between categories underlying contextual categories $\scat{B}'$ and $\scat{B}$, then $F^*(\scat{C})$ and $G^*(\scat{C})$ are also isomorphic for every contextually indexed (contextual) category $\scat{C}$.

Let $\scat{C}$ be a category with a chosen class of maps $\Fib$, called fibrations, such that the terminal object exists and pullbacks of fibrations exist and are fibrations.
The pair $(\scat{C},\Fib)$ will be called a \emph{category with fibrations}.
We will write simply $\scat{C}$ for such a category if the class of fibrations is clear from the context.
The local universes construction defined in \cite{local-universes}, for every category with fibrations $\scat{C}$,
gives us a contextual category $\scat{C}_!$ whose types over $\Gamma$ are diagrams of the form
\[ \xymatrix{                       & E_A \ar@{->>}[d]^{p_A} \\
              \Gamma \ar[r]_-{r_A}  & V_A
            } \]
where $p_A$ is a fibration.
Terms of this type are sections $\Gamma \to E_A$.
For every type $\Gamma \vdash A$, the extended context $\Gamma, x : A$ is the pullback of this diagram.
If $\scat{C}$ satisfies additional conditions listed in \cite[Definition~4.2.1]{local-universes}, then $\scat{C}_!$ models unit types, identity types, $\Sigma$-types, and $\Pi$-types.

If $\scat{B}$ is a category with fibrations, then the underlying category of $\scat{B}_!$ is equivalent to the category $\scat{B}_\fs{f}$ of fibration objects of $\scat{B}$.
Thus, change of base for (contextual) categories indexed over $\scat{B}_!$ is defined for every functor $\scat{B}'_\fs{f} \to \scat{B}_\fs{f}$.
If $F : \scat{B}' \to \scat{B}$ is a functor that preserves fibrations, terminal objects, and pullbacks of fibrations, then there is an obvious contextual functor between contextual categories $F_! : \scat{B}'_! \to \scat{B}_!$.
The underlying functor of $F_!$ is isomorphic to $F_\fs{f} : \scat{B}'_\fs{f} \to \scat{B}_\fs{f}$, the restriction of $F$ to fibrant objects of $\scat{B}'$.

Let $\scat{B}$ be a contextual category, let $\scat{C}$ be a category with fibrations, and let $F : \scat{B} \to \scat{C}$ be a functor.
We showed above that if the image of $F$ consists of fibrant objects, then we can define change of base $F^*(\scat{C}_!)$.
Actually, we can define this contextual category for arbitrary $F$.
This is a straightforward generalization of the local universes model.
Closed indexed types in context $\Gamma$ are diagrams of the form
\[ \xymatrix{                           & E_A \ar@{->>}[d]^{p_A} \\
              F(\Gamma) \ar[r]_-{r_A}   & V_A
            } \]
where $p_A$ is a fibration.
Contexts, terms, and non-closed types are defined as before.

We are mainly interested in locally small contextually indexed categories since most of the constructions in \cite{indexed-tt} use this property.
In general, local smallness of $\scat{C}$ does not imply local smallness of $F^*(\scat{C})$.
We consider a special case when it does:

\begin{lem}[cartesian-closed]
Let $F : \scat{B} \to \scat{C}$ be a functor between categories with fibrations.
Suppose that, for every fibration $f : A \twoheadrightarrow B$ in $\scat{C}$, the pullback functor $f^* : \scat{C}/B \to \scat{C}/A$ has a right adjoint $\Pi_f : \scat{C}/A \to \scat{C}/B$.
Then
\begin{enumerate}
\item If $\Pi_f$ maps fibrations over $A$ to fibrations over $B$, then $F^*(\scat{C}_!)$ has $\Pi$-types.
\item Suppose that there is a full subcategory $\scat{C}'$ of $\scat{C}$ such that $\Pi_f$ maps fibrations over $A$ to fibrations over $B$ whenever $B$ belongs to $\scat{C}'$.
Suppose that, for every object $X \in \scat{C}$, there is a map $\varepsilon_X : X' \to X$ such that $X' \in \scat{C}'$ and,
for every fibrant object $\Gamma \in \scat{B}$, every map of the form $F(\Gamma) \to X$ factors through $\varepsilon_X$ and the factorization is natural in $\Gamma$.
Then $F^*(\scat{C}_!)$ has $\Pi$-types in the empty (indexed) context, that is $F^*(\scat{C}_!)$ is Cartesian closed.
\end{enumerate}
\end{lem}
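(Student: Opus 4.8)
The plan is to realize $\Pi$-types through the local universes construction, so that in both parts the problem reduces to producing, for suitable types $A$ and $B$, a local universe $\bigl(V_\Pi,\, p_\Pi : E_\Pi \twoheadrightarrow V_\Pi\bigr)$ together with a classifying map natural in the base, whose display map $p_\Pi$ is a fibration. Recall that a type over an indexed context $\Delta$ in the fibre over $\Gamma$ is a datum $(V_A, p_A : E_A \twoheadrightarrow V_A, r_A)$ with $r_A : F(\Gamma).\Delta \to V_A$, and that the display map of $A$ over $F(\Gamma).\Delta$ is the pullback of $p_A$ along $r_A$. Since the universes $V_A, E_A$ do not depend on $\Gamma$ and the classifying maps are built by naturality, stability under reindexing will be automatic once the classifying map of the $\Pi$-type is produced functorially out of $r_A$ and $r_B$; the entire content of the lemma is the fibration condition on $p_\Pi$.

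For part (1), given $A$ with universe $p_A : E_A \twoheadrightarrow V_A$ and $B$ (in the context extended by $A$) with universe $p_B : E_B \twoheadrightarrow V_B$, I would set $V_\Pi := \Pi_{p_A}(E_A \times V_B \to E_A)$, which exists since $p_A$ is a fibration and $\Pi_{p_A}$ is available by hypothesis. By the adjunction $p_A^* \dashv \Pi_{p_A}$, a map $F(\Gamma).\Delta \to V_\Pi$ lying over a given $r_A$ is exactly a map $F(\Gamma).\Delta.A \to V_B$, so maps into $V_\Pi$ classify precisely pairs $(A,B)$, and this produces $r_\Pi$ out of $r_A$ and $r_B$. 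Over $V_\Pi$ one has the generic display map $q : V_\Pi \times_{V_A} E_A \twoheadrightarrow V_\Pi$ (a pullback of $p_A$, hence a fibration) and the generic family $E_B^\Pi \twoheadrightarrow V_\Pi \times_{V_A} E_A$ obtained by pulling back $p_B$ along the tautological map to $V_B$; I then set $E_\Pi := \Pi_q(E_B^\Pi)$. Since $E_B^\Pi$ is a fibration over the domain of $q$ and $\Pi_q$ preserves fibrations by assumption, $p_\Pi : E_\Pi \twoheadrightarrow V_\Pi$ is a fibration. Beck--Chevalley for the pullback of $q$ along $r_\Pi$ identifies the display map classified by $r_\Pi$ with $\Pi_a(b)$, where $a : F(\Gamma).\Delta.A \twoheadrightarrow F(\Gamma).\Delta$ and $b$ are the display maps of $A$ and $B$, and the $\beta$- and $\eta$-rules are the counit and unit of the adjunction $a^* \dashv \Pi_a$.

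For part (2) the indexed context $\Delta$ is empty, so $r_A : F(\Gamma) \to V_A$ and $r_\Pi : F(\Gamma) \to V_\Pi$ have domain in the image of $F$. I would build $V_\Pi$, $E_\Pi$, and $r_\Pi$ exactly as above; the only missing ingredient is that $\Pi_q$ need not preserve fibrations when $V_\Pi \notin \scat{C}'$. To repair this, replace $V_\Pi$ by its reflection $\varepsilon : V_\Pi' \to V_\Pi$ with $V_\Pi' \in \scat{C}'$, form the pullback $q' : V_\Pi' \times_{V_\Pi}\!\bigl(V_\Pi \times_{V_A} E_A\bigr) \twoheadrightarrow V_\Pi'$ of $q$, and set $E_\Pi' := \Pi_{q'}\bigl(\varepsilon^*(E_B^\Pi)\bigr)$. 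Beck--Chevalley gives $E_\Pi' \cong \varepsilon^* E_\Pi$, and since the base $V_\Pi'$ of $q'$ lies in $\scat{C}'$ while $\varepsilon^*(E_B^\Pi)$ is a fibration, the weaker hypothesis makes $p_\Pi' : E_\Pi' \twoheadrightarrow V_\Pi'$ a fibration. Because the base contexts of the indexing are fibrant objects of $\scat{B}$, the factorization property applies to $r_\Pi$ and yields $\bar r : F(\Gamma) \to V_\Pi'$ with $\varepsilon \circ \bar r = r_\Pi$; the datum $(V_\Pi', p_\Pi', \bar r)$ is the required $\Pi$-type, and $\bar r^* E_\Pi' = r_\Pi^* E_\Pi = \Pi_A B$, so it classifies the same object as before.

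The routine parts — the adjunction computations, the $\beta\eta$-rules, and the Beck--Chevalley identifications — follow the local universes model of \cite{local-universes}, and I do not expect difficulties there. The main obstacle is the reflection step in part (2): one must check simultaneously that passing from $V_\Pi$ to $V_\Pi'$ leaves the classified $\Pi$-type unchanged (this is exactly where Beck--Chevalley is essential) and that the factorization $\bar r$ is compatible with reindexing. The latter is precisely why the factorization is required to be natural in $\Gamma$: for a base morphism $\sigma$, naturality gives $\overline{r_\Pi \circ F(\sigma)} = \bar r \circ F(\sigma)$, which is the strict stability of the $\Pi$-type under reindexing.
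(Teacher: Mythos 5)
Your proposal is correct and follows essentially the same route as the paper: both form the universe $V_\Pi = \Pi_{p_A}(E_A \times V_B)$, factor the classifying map $F(\Gamma) \to V_\Pi$ through $\varepsilon_{V_\Pi}$ using fibrancy of the base context $\Gamma$, pull the generic data back to the reflection $V_\Pi' \in \scat{C}'$, and apply the dependent product only \emph{after} this pullback so that its base lies in $\scat{C}'$, with naturality of the factorization giving stability under reindexing. The only cosmetic difference is your explicit Beck--Chevalley identification $E_\Pi' \cong \varepsilon^*E_\Pi$; the paper avoids this by never forming $E_\Pi$ over the unreflected universe and simply taking $\Pi_{p_A^{u'}}(p_B^{u'})$ as the display map.
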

\begin{proof}
The first claim was proved in \cite{local-universes}.
The proof of the second one is a minor modification of this proof.
A type $\Gamma \mid \cdot \vdash A$ corresponds to a map $r_A : F(\Gamma) \to V_A$ and a fibration $p_A : E_A \twoheadrightarrow V_A$.
A type $\Gamma \mid x : A \vdash B$ corresponds to a map $r_B : \Gamma.A \to V_B$ and a fibration $p_B : E_B \twoheadrightarrow V_B$, where $\Gamma.A = F(\Gamma) \times_{V_A} E_A$.
Let $V_\Pi^u = \Pi_{p_A}(E_A \times V_B)$, let $p_A^u : E_A^u \twoheadrightarrow V_\Pi^u$ be the pullback of $p_A$ along the map $V_\Pi^u \to V_A$,
and let $p_B^u : E_B^u \twoheadrightarrow E_A^u$ be the pullback of $p_B$ along the map $E_A^u \xrightarrow{\fs{ev}} E_A \times V_B \xrightarrow{\pi_2} V_B$.

The maps $r_A : F(\Gamma) \to V_A$ and $r_B : \Gamma.A \to V_B$ give us a map $r_\Pi : F(\Gamma) \to V_\Pi^u$.
By assumption, this map factors through a map $\varepsilon_{V_\Pi^u} : V_\Pi^{u'} \to V_\Pi^u$.
Let $p_A^{u'} : E_A^{u'} \twoheadrightarrow V_\Pi^{u'}$ and $p_B^{u'} : E_B^{u'} \twoheadrightarrow E_A^{u'}$ be pullbacks of $p_A^u$ and $p_B^u$ along $\varepsilon_{V_\Pi^u}$, respectively.
By assumption, $\Pi_{p_A^{u'}}(p_B^{u'})$ is a fibration over $V_\Pi^{u'}$.
We define the interpretation of $\Pi_{x : A} B$ as the map $F(\Gamma) \to V_\Pi^{u'}$ together with the fibration $\Pi_{p_A^{u'}}(p_B^{u'})$.
Since the factorization is natural in $\Gamma$, this interpretation is stable under substitution.
The interpretation of abstraction and application is defined in the same way as in the proof of the first claim.
\end{proof}

Cartesian closed contextually indexed categories are often locally small.
This is true for $F^*(\scat{C}_!)$ if $F$ has a right adjoint.
It is actually enough to assume that it has a relative right adjoint \cite{dense-relative}.
Let $\scat{B}'$ be a full subcategory.
Then a functor $G : \scat{C} \to \scat{B}$ is right adjoint to $F : \scat{B} \to \scat{C}$ relative to $\scat{B}'$ if, for every $X \in \scat{B}'$ and every $Y \in \scat{C}$,
there is a bijection $\varphi : \Hom_\scat{C}(F(X),Y) \simeq \Hom_\scat{B}(X,G(Y))$ natural in $X$ and $Y$ (actually, we only need to assume that it is natural in $X$).
Of course, any right adjoint is a right adjoint relative to any subcategory.
We will later see an example of a relative right adjoint which is not a right adjoint.

\begin{lem}[locally-small]
Let $F : \scat{B} \to \scat{C}$ be a functor between categories with fibrations which has a right adjoint $G$ relative to fibrant objects of $\scat{B}$ which preserves fibrations.
If $F^*(\scat{C}_!)$ has the $\Pi$-type $\Pi(\Delta.B)$ for some type $\Gamma \mid \Delta \vdash B$, then it has the dependent $\Hom$-type $\Hom(\Delta.B)$.
In particular, if it is Cartesian closed, then it is locally small.
\end{lem}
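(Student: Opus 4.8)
The plan is to move the closed indexed type $\Pi(\Delta.B)$, which lives over $F(\Gamma)$ inside $\scat{C}$, down to a base type over $\Gamma$ inside $\scat{B}$ by applying the relative right adjoint $G$ and precomposing with the unit of the adjunction; the bijection defining the dependent $\Hom$-type will then be read off from the naturality of $\varphi$. Both the hypothesis that $G$ preserves fibrations and the hypothesis that $G$ is a relative right adjoint over fibrant objects will be used, the former to ensure the output is a genuine type and the latter to match terms.

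First I would present the closed indexed type $\Pi(\Delta.B)$ over $\Gamma$ in local-universe form, as a map $r : F(\Gamma) \to V$ together with a fibration $p : E \twoheadrightarrow V$ in $\scat{C}$, so that by the defining property of the $\Pi$-type the sections $F(\Gamma) \to E$ over $r$ are in bijection, stably under reindexing, with the terms $\Gamma \mid \Delta \vdash b : B$. Since contexts of $\scat{B}_!$ are fibrant objects, $\Gamma$ is fibrant and the relative adjunction supplies a unit $\eta_\Gamma = \varphi(\id_{F(\Gamma)}) : \Gamma \to G(F(\Gamma))$. As $G$ preserves fibrations, $G(p) : G(E) \twoheadrightarrow G(V)$ is a fibration in $\scat{B}$, and I would define the base type $\Hom(\Delta.B)$ over $\Gamma$ to be the local-universe diagram given by the composite $G(r) \circ \eta_\Gamma : \Gamma \to G(V)$ together with the fibration $G(p)$. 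This is a legitimate base type precisely because $G(p)$ is a fibration.

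Next I would identify its terms. A term of $\Hom(\Delta.B)$ is a section $s : \Gamma \to G(E)$ with $G(p) \circ s = G(r) \circ \eta_\Gamma$. Writing $s = \varphi(\tilde{s})$ for the unique $\tilde{s} : F(\Gamma) \to E$ and using naturality of $\varphi$ in the second variable together with $\eta_\Gamma = \varphi(\id_{F(\Gamma)})$, the two sides of this equation become $\varphi(p \circ \tilde{s})$ and $\varphi(r)$; since $\varphi$ is a bijection, the section condition is equivalent to $p \circ \tilde{s} = r$. Hence terms of $\Hom(\Delta.B)$ correspond bijectively to sections of $p$ over $r$, therefore to terms of $\Pi(\Delta.B)$, therefore to terms of $B$. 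Stability under reindexing along a context map $\sigma : \Gamma' \to \Gamma$ (a map of fibrant objects) then follows from naturality of $\varphi$ in the first variable, naturality of the unit, and the stability already carried by $\Pi(\Delta.B)$.

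The step I expect to be the main obstacle is the bookkeeping for this stability: one must verify that reindexing maps are admissible for the relative adjunction (which holds since all contexts are fibrant), that the structure map $G(r) \circ \eta_\Gamma$ and the fibration $G(p)$ restrict correctly along $\sigma$, and that the resulting square of bijections commutes, so that the correspondence is natural and not merely pointwise. For the final assertion, if $F^*(\scat{C}_!)$ is Cartesian closed then by \rlem{cartesian-closed} it has the $\Pi$-type $\Pi(x : A.\,B)$ for all closed indexed types $\Gamma \mid \cdot \vdash A$ and $\Gamma \mid \cdot \vdash B$; applying the construction above with $\Delta = (x : A)$ produces $\Hom(A,B)$ whose terms are in natural bijection with the terms $\Gamma \mid x : A \vdash b : B$, i.e.\ with the maps $A \to B$, which is exactly local smallness.
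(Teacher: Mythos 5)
Your proposal is correct and matches the paper's own proof in all essentials: the paper likewise defines $\Hom(\Delta.B)$ by applying $G$ to the local-universe presentation of $\Pi(\Delta.B)$ (writing the structure map directly as $\varphi(r_\Pi) : \Gamma \to G(V_\Pi)$, which is exactly your $G(r) \circ \eta_\Gamma$), uses that contexts of $\scat{B}$ are fibrant to apply the relative adjunction, identifies terms via $\varphi$ and $\varphi^{-1}$, and derives stability under reindexing from naturality of $\varphi$ in $\Gamma$. The only cosmetic difference is that the paper packages the term bijection as the two operations $\lambda(-)$ and $(-)\,()$ relating $\Hom(\Delta.B)$ to $\Pi(\Delta.B)$, rather than chasing the section condition through $\varphi$ as you do.
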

\begin{proof}
Let $r_\Pi : F(\Gamma) \to V_\Pi$, $p_\Pi : E_\Pi \twoheadrightarrow V_\Pi$ be the interpretation of $\Pi(\Delta.B)$.
We define $\Hom(\Delta.B)$ as $\varphi(r_\Pi) : \Gamma \to G(V_\Pi)$, $G(p_\Pi) : G(E_\Pi) \twoheadrightarrow G(V_\Pi)$.
We can apply $\varphi$ since $\Gamma$ is a context of $\scat{B}$ and contexts are always fibrant.

Since we have the abstraction and the application operations for $\Pi$-types, it is enough to define the interpretation of the following operations:
\begin{center}
\AxiomC{$\Gamma \mid \cdot \vdash b : \Pi(\Delta.B)$}
\UnaryInfC{$\Gamma \vdash \lambda(b) : \Hom(\Delta.B)$}
\DisplayProof
\qquad
\AxiomC{$\Gamma \vdash f : \Hom(\Delta.B)$}
\UnaryInfC{$\Gamma \mid \cdot \vdash f\,() : \Pi(\Delta.B)$}
\DisplayProof
\begin{align*}
\lambda(b)\,() & = b \\
\lambda(f\,()) & = f
\end{align*}
\end{center}

A term $\Gamma \mid \cdot \vdash b : \Pi(\Delta.B)$ corresponds to a section $b : F(\Gamma) \to E_\Pi$ of $p_\Pi$.
We define $\lambda(b)$ as $\varphi(b) : \Gamma \to G(E_\Pi)$.
Conversely, if $f : \Gamma \to G(E_\Pi)$ is a section of $G(p_\Pi)$, then we define $f\,()$ as $\varphi^{-1}(f) : F(\Gamma) \to E_\Pi$.
The fact that these constructions are stable under substitution follows from the naturality of $\varphi$ in $\Gamma$.
\end{proof}

\begin{remark}
If $F^*(\scat{C}_!)$ is not Cartesian closed, it might be possible to use the argument from \rlem{cartesian-closed} to prove that it is locally small directly.
If the second condition of this lemma holds except for the part that $\Pi_f$ preserves fibrations,
but it is true that $F$ has a right adjoint $G$ relative to fibrant objects such that $G(\Pi_f(g))$ is a fibration for every fibration $g$, then $F^*(\scat{C}_!)$ has dependent $\Hom$-types.
\end{remark}

Many examples of categories with fibrations that we are interested in can be constructed as localizations of larger categories.
If $\scat{M}$ is a combinatorial model category, then left Bousfield localizations of $\scat{M}$ can be described model structures on $\scat{M}$ that have the same class of cofibrations, but less fibrations.
The following proposition implies that every such localization preserves various nice properties of $\scat{M}_!$:

\begin{prop}[localized-hom]
Let $\scat{B}$ be a contextual category, let $\scat{C}$ be a category, and let $F : \scat{B} \to \scat{C}$ be a functor between them.
Let $(\scat{C},\Fib)$ and $(\scat{C},\Fib')$ be two categories with fibrations such that $\Fib' \subseteq \Fib$.
If $F^*((\scat{C},\Fib)_!)$ is locally small or has dependent $\Hom$-types, then this is also true for $F^*((\scat{C},\Fib')_!)$.
\end{prop}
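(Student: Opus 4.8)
The plan is to exploit the fact that both $F^*((\scat{C},\Fib)_!)$ and $F^*((\scat{C},\Fib')_!)$ are built over the \emph{same} base contextual category $\scat{B}$ and the \emph{same} ambient category $\scat{C}$, differing only in which diagrams over a context are declared to be indexed types, and that a dependent $\Hom$-type $\Gamma \vdash \Hom(\Delta.B)$ is a type of $\scat{B}$ characterized by a property phrased entirely in terms of its terms. First I would observe that, since $\Fib' \subseteq \Fib$, every $\Fib'$-fibration is a $\Fib$-fibration. Consequently every closed indexed type of the localized construction, given by a map $r_A : F(\Gamma) \to V_A$ together with a fibration $p_A : E_A \twoheadrightarrow V_A$ with $p_A \in \Fib'$, is a fortiori a closed indexed type of the unlocalized construction, with the identical underlying diagram. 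The same holds for indexed contexts: each is an iterated extension by fibrations, and context extension is computed as a pullback in $\scat{C}$; because $(\scat{C},\Fib')$ is a category with fibrations, these pullbacks of $\Fib'$-fibrations are again $\Fib'$-fibrations and are the very same objects of $\scat{C}$ whether regarded in $(\scat{C},\Fib')$ or in $(\scat{C},\Fib)$. Thus the indexed types and indexed contexts of $F^*((\scat{C},\Fib')_!)$ form a subcollection of those of $F^*((\scat{C},\Fib)_!)$ with identical underlying data.

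Next I would check that the relevant term sets and their reindexing behaviour agree between the two constructions. A term of an indexed type $\Gamma \mid \Delta \vdash B$ is a section of the associated fibration, and this set depends only on the underlying maps in $\scat{C}$, not on whether the fibration is viewed in $\Fib$ or in $\Fib'$. A term of a base type $\Gamma \vdash \Hom(\Delta.B)$ is a section in $\scat{B}$, and $\scat{B}$ is fixed throughout. Reindexing along a morphism of $\scat{B}$ is pullback in $\scat{C}$ precomposed with $F$, which is again insensitive to the fibration class. Hence for any $\Gamma \mid \Delta \vdash B$ valid in the localized construction, its set of terms and the way that set varies under reindexing coincide with what is computed in the unlocalized construction.

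With these identifications the proposition is immediate. Given $\Gamma \mid \Delta \vdash B$ in $F^*((\scat{C},\Fib')_!)$, it is also a type in $F^*((\scat{C},\Fib)_!)$, so by hypothesis the latter supplies a type $\Gamma \vdash \Hom(\Delta.B)$ in $\scat{B}$ together with a reindexing-stable bijection between its terms and the terms of $B$. Since the base type and both term sets are literally the same objects in the two constructions, this same type and bijection witness a dependent $\Hom$-type for $B$ in $F^*((\scat{C},\Fib')_!)$, and stability under reindexing carries over because reindexing is the same operation. The local-smallness claim is the special case $\Delta = (x : A)$ with $B$ the weakening of a second type $\Gamma \mid \cdot \vdash B$, which recovers $\Hom(A,B)$, and so follows by identical reasoning.

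The one point that genuinely requires care — and where I expect to spend the most effort — is confirming that the generalized local-universes notions of \emph{indexed type}, \emph{indexed context}, and \emph{term} all factor through data that is manifestly insensitive to the choice between $\Fib$ and $\Fib'$, so that the transfer is an \emph{equality} of the pertinent sets rather than merely some natural bijection that would itself need checking. Concretely, this means unwinding the construction described just before \rlem{locally-small} and verifying that each ingredient (the maps $r_A$, the fibrations $p_A$, the context-extension pullbacks, and the sections defining terms) is computed in the common category $\scat{C}$ and the fixed base $\scat{B}$. Once this bookkeeping is complete, no further argument is needed.
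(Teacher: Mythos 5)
Your proposal is correct and follows exactly the paper's own (much terser) argument: since $\Fib' \subseteq \Fib$, the indexed types of $F^*((\scat{C},\Fib')_!)$ are among those of $F^*((\scat{C},\Fib)_!)$, while base types, base terms, and indexed terms coincide, so the $\Hom$-types of the larger structure can simply be reused. The extra bookkeeping you flag (contexts, reindexing, term sets being literally equal) is precisely what the paper's one-sentence proof implicitly relies on, so there is no divergence in approach.
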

\begin{proof}
Since types of $(\scat{C},\Fib')_!$ are also types of $F^*((\scat{C},\Fib)_!)$ and the base types, base terms, and indexed terms in these contextually indexed contextual categories are the same,
we can define $\Hom$-types between types of $(\scat{C},\Fib')_!$ as $\Hom$-types in $F^*((\scat{C},\Fib)_!)$.
\end{proof}

\section{Type-theoretic semi-fibration categories}
\label{sec:ttsfc}

In this section, we define type-theoretic semi-fibration categories which generalize type-theoretic fibration categories defined in \cite[Definition~2.1]{shul-inv}.

\subsection{Definition}

Let $\scat{C}$ be a category with fibrations.
Then over categories $\scat{C}/\Gamma$ are also categories with fibrations.
We will say that a map of $\scat{C}$ is a \emph{trivial cofibration} if it has the left lifting property with respect to fibrations.
Then a map in each of the categories $\scat{C}_\fs{f}$, $\scat{C}/\Gamma$, and $(\scat{C}/\Gamma)_\fs{f}$ is a trivial cofibration if and only if it is a trivial cofibration in $\scat{C}$.

Let $\scat{C}$ be a category with fibrations.
We will say that $\scat{C}$ is \emph{right proper} if, for every fibration $p : A \twoheadrightarrow B$, the right adjoint $\Pi_p : \scat{C}/A \to \scat{C}/B$ to the pullback functor $p^* : \scat{C}/B \to \scat{C}/A$ exists and maps fibrations over $A$ to fibrations over $B$.
We will say that $\scat{C}$ is \emph{cofibrantly generated} if it is cocomplete and fibrations are precisely the maps with the right lifting property with respect to a small set of maps permitting the small object argument.
We will say that $\scat{C}$ is \emph{combinatorial} if it is locally presentable and cofibrantly generated.
We will say that the class of fibrations of $\scat{C}$ is \emph{saturated} if every map that has the right lifting property with respect to trivial cofibrations is a fibration.
The class of fibrations in every cofibrantly generated category is saturated.

\begin{defn}[ttsfc]
A \emph{type-theoretic semi-fibration category} consists of a category $\scat{C}$ with a chosen class of maps, called fibrations, a chosen terminal object, and chosen pullbacks of fibrations such that the following conditions hold:
\begin{enumerate}
\item \label{it:ttsfc-pi} Fibrations are exponentiable.
\item \label{it:ttsfc-fib} The class of fibrations contains identity morphisms and is closed under compositions and pullbacks.
\item Let $i : A \to B$ be a trivial cofibration in $(\scat{C}/\Gamma)_\fs{f}$ for some $\Gamma$.
Then pullbacks of $i$ along fibrations are trivial cofibrations.
\item \label{it:ttsfc-factor} Let $i : A \to B$ be a map in $(\scat{C}/\Gamma)_\fs{f}$ for some $\Gamma$.
Then $i$ factors as a trivial cofibration followed by a fibration.
\item Let $i : A \to B$ be a trivial cofibration in $(\scat{C}/\Gamma)_\fs{f}$ for some $\Gamma$.
Then, for every map $r : \Delta \to \Gamma$, the pullback map $r^*(A) \to r^*(B)$ is a trivial cofibration.
\end{enumerate}
\end{defn}

The first condition is a technical assumption required by the local universes construction.
The second condition implies the existence of $\Sigma$-types and unit types.
The third condition is a technical assumption that guarantees that the category is homotopically well-behaved.
The last two conditions imply the existence of identity types.

The following conditions on a category with fibrations are equivalent:
\begin{itemize}
\item It is a right proper type-theoretic semi-fibration category in which all objects are fibrant.
\item It is a type-theoretic fibration category in which fibrations are exponentiable.
\end{itemize}
We put a stronger condition on our categories that fibrations are exponentiable since we need it to use the local universes construction.

\begin{remark}[semi-fib]
The results of the second and the third sections of \cite{shul-inv} hold for a type-theoretic semi-fibration category in which all objects are fibrant even if it is not right proper since this condition was not used there.
In particular, such a type-theoretic semi-fibration category is a category of fibrant objects.
\end{remark}

\begin{remark}[semi-over]
Slice categories of a type-theoretic semi-fibration category are also type-theoretic semi-fibration categories.
\end{remark}

Let $p : X \twoheadrightarrow Y$ be a fibration in a type-theoretic semi-fibration category.
We will say that $p$ is \emph{trivial} if there is a map $s : Y \to X$ such that $p \circ s = \id_Y$ and a homotopy $h : X \to P(X)$ between $s \circ p$ and $\id_X$,
where $P(X)$ is a factorization of $\langle \id_X, \id_X \rangle : X \to X \times_Y X$ into a trivial cofibration followed by a fibration.
We will say that $\scat{C}$ is \emph{extensional} if $\Pi_p$ maps trivial fibrations over $X$ to trivial fibrations over $Y$.

\begin{example}
Let $\scat{M}$ be a model category in which fibrations are exponentiable and pullbacks of trivial cofibrations are cofibrations.
Then $\scat{M}$ is a type-theoretic semi-fibration category.
We will call such a model category \emph{type-theoretic model category}.
This is a slightly more general notion than the one defined in \cite[Definition~2.12]{shul-inv}.

A type-theoretic model category is right proper as a type-theoretic semi-fibration category if and only if it is right proper as a model category.
If trivial fibrations in a type-theoretic model category $\scat{M}$ are homotopy equivalences, then $\scat{M}$ is extensional.
In particular, every right proper locally Cartesian closed model category in which cofibrations are precisely monomorphisms is an extensional type-theoretic semi-fibration category.

Every cofibrantly generated (resp., combinatorial) type-theoretic model category is cofibrantly generated (resp., combinatorial) as a type-theoretic semi-fibration category.
\end{example}

\begin{defn}
A \emph{(resp., relative) Quillen adjunction} between type-theoretic semi-fibration categories is an adjunction (resp., relative to fibrant objects) such that the right adjoint preserves fibrations.
If $F \dashv G$ is a (resp., relative) Quillen adjunction, then $F$ will be called \emph{left (resp., relative) Quillen functor} and $G$ will be called \emph{right (resp., relative) Quillen functor}.
We will say that a (relative) Quillen adjunction is \emph{extensional} if the right adjoint also preserves trivial fibrations.
\end{defn}

\begin{prop}[indexed-locally-small]
Let $F : \scat{B} \to \scat{C}$ be a functor between type-theoretic semi-fibration categories.
Then $F^*(\scat{C}_!)$ is a contextually indexed contextual category with unit types, $\Sigma$-types, and identity types.
Moreover, if $\scat{C}$ is right proper, then
\begin{enumerate}
\item $F^*(\scat{C}_!)$ has $\Pi$-types which are extensional if $\scat{C}$ is extensional.
\item If $F$ is a left relative Quillen functor, then $F^*(\scat{C}_!)$ has dependent $\Hom$-types.
In particular, it is locally small.
If $\scat{C}$ is extensional and the relative adjunction is extensional, then identity types are extensional.
\end{enumerate}
\end{prop}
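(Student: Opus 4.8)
The plan is to reduce the unconditional assertions to the generalized local universes construction described above and to the two preceding lemmas, and then to handle the extensionality clauses by tracking path objects through the $\Pi_p$ and $G$ constructions.

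First I would observe that $F^*(\scat{C}_!)$ is a contextually indexed contextual category essentially by construction: since $\scat{B}$ is a type-theoretic semi-fibration category it is in particular a category with fibrations, so $\scat{B}_!$ is a contextual category, and the generalized local universes construction assigns to each context $\Gamma$ the contextual category of diagrams $F(\Gamma) \to V_A \leftarrow E_A$ with $p_A : E_A \twoheadrightarrow V_A$ a fibration, functorially in reindexing. For the type formers I would check that the axioms of \rdefn{ttsfc} supply exactly the data the local universes construction requires: condition~\ref{it:ttsfc-fib} (identities are fibrations, fibrations closed under composition and pullback) yields unit and $\Sigma$-types, while the factorization axiom~\ref{it:ttsfc-factor} together with the two stability conditions for trivial cofibrations yields strictly stable path objects and hence identity types, as in \cite{local-universes,shul-inv}. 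The point that makes the change of base harmless is that all of these operations take place at the level of the local universe data $(V,E,p)$, which are objects and fibrations of $\scat{C}$ that do not mention the indexing context; they therefore transfer verbatim from $\scat{C}_!$ to $F^*(\scat{C}_!)$, only the classifying maps $r$ being reindexed. Since the universe-level operations are natural in $r$, stability under reindexing is automatic.

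For the ``moreover'' clauses I would invoke the earlier lemmas. When $\scat{C}$ is right proper, $\Pi_f$ exists and preserves fibrations for every fibration $f$, so \rlem{cartesian-closed} part~(1) gives $\Pi$-types; their extensionality follows because the interpretation of $\Pi$ is built from $\Pi_{p_A}$, and $\scat{C}$ being extensional means $\Pi_{p_A}$ preserves trivial fibrations, which is exactly what is needed for the $\Pi$-types to be extensional. If moreover $F$ is a left relative Quillen functor, then it has a relative right adjoint $G$ preserving fibrations, and since part~(1) provides every $\Pi$-type $\Pi(\Delta.B)$, \rlem{locally-small} yields the dependent $\Hom$-type $\Hom(\Delta.B)$, with ordinary local smallness as the special case. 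For the final extensionality statement I would compute the identity type of $\Hom(\Delta.B) = (G(V_\Pi), G(E_\Pi), G(p_\Pi), \varphi(r_\Pi))$ as the image under $G$ of the path object of the corresponding $\Pi$-type, using that an extensional relative adjunction has $G$ preserving trivial fibrations; combined with the extensionality of $\scat{C}$ feeding through $\Pi_{p_A}$, this exhibits the relevant path-object fibration as a trivial fibration, i.e. the identity types are extensional.

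The main obstacle I anticipate is the extensionality bookkeeping rather than existence: one must verify that applying $G$ and $\Pi_{p_A}$ to a path object again computes a path object up to the triviality one needs, so that ``$\Pi_p$ preserves trivial fibrations'' and ``$G$ preserves trivial fibrations'' genuinely combine to force extensionality of the base $\Hom$-types. By contrast, the unconditional part is comparatively routine once one accepts that the local universes constructions are insensitive to the base functor $F$, and both the existence of $\Pi$-types and the passage to dependent $\Hom$-types are immediate applications of \rlem{cartesian-closed} and \rlem{locally-small}.
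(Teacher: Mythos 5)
Your treatment of everything except the last sentence of item~(2) matches the paper: the unit types, $\Sigma$-types, and identity types come from the local universes construction applied to the data in \rdefn{ttsfc}, and the $\Pi$-types and dependent $\Hom$-types are exactly \rlem{cartesian-closed} and \rlem{locally-small}, with functional extensionality supplied by \cite[Lemma~5.9]{shul-inv}. That part is fine.

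The gap is in the final claim, extensionality of identity types, which is in fact the bulk of the paper's proof rather than ``bookkeeping.'' You propose to \emph{compute} the identity type of $\Hom(\Delta.B)$ as the image under $G$ of the path object of the corresponding $\Pi$-type. But that is not how this identity type is interpreted. $\Hom(\Delta.B)$ is a \emph{base} type: its interpretation is the fibration $G(p_\Pi) : G(E_\Pi) \twoheadrightarrow G(V_\Pi)$ in $\scat{B}$, so its identity type is given by a path object formed \emph{in $\scat{B}$}, i.e.\ a factorization of the diagonal $G(E_\Pi) \to G(E_\Pi) \times_{G(V_\Pi)} G(E_\Pi)$ into a trivial cofibration followed by a fibration of $\scat{B}$, yielding $P_{G(V_\Pi)}(G(E_\Pi))$. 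By contrast, $\Hom$ applied to the indexed identity type is interpreted as $G$ applied to a path object formed \emph{in $\scat{C}$}, namely $G(P_{V_\Pi}(E_\Pi))$. These are genuinely different objects: $G$ is only a relative right adjoint preserving fibrations (and, under the extensionality hypothesis, trivial fibrations); it does not preserve trivial cofibrations, factorizations, or even pullbacks on the nose, so there is no identification of the one with the other. Extensionality of identity types is precisely the assertion that the canonical comparison map $\fs{hap}$ between these two distinct types is an equivalence, and this must be proved, not read off.

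Concretely, the paper's argument has three steps your sketch collapses. First, using $\Pi$-types and their extensionality, the general statement for $\Id_{\Hom(\Delta.B)}(f,g) \to \Hom(\Delta.\Id_B(f\,\overline{x},g\,\overline{x}))$ is reduced to showing $\fs{hap}_A$ is an equivalence for a \emph{closed} type $A$ (namely $A = \Pi(\Delta.B)$). Second, $\fs{hap}_A$ is \emph{constructed} as a lift: the trivial cofibration $G(E_A) \to P_{G(V_A)}(G(E_A))$ in $\scat{B}$ lifts against the fibration $G(q') : G(P_{V_A}(E_A)) \twoheadrightarrow G(E_A \times_{V_A} E_A)$. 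Third, one proves this lift $s$ is an equivalence by regarding the whole diagram as living among fibrant objects over $G(E_A)$, where \rremark{semi-fib} and \rremark{semi-over} make \cite[Lemma~3.6]{shul-inv} applicable, so trivial cofibrations are equivalences; then 2-out-of-3 shows $P_{V_A}(E_A) \to E_A$ is a trivial fibration in $\scat{C}$, and only at this point does ``$G$ preserves trivial fibrations'' enter, giving that $G(r')$ is an equivalence and hence, by 2-out-of-3 again, that $s$ is an equivalence. Without the lifting construction and this 2-out-of-3 chase, the hypotheses ``$\Pi_p$ preserves trivial fibrations'' and ``$G$ preserves trivial fibrations'' never combine into the conclusion, so your proposal as written does not establish the last sentence of the proposition.
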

\begin{proof}
The existence of unit types, $\Sigma$-types, and identity types follows from \cite{local-universes}.
The existence of $\Pi$-types and dependent $\Hom$-types is proved in \rlem{cartesian-closed} and \rlem{locally-small}.
The functional extensionality holds by \cite[Lemma~5.9]{shul-inv}.
Thus, we just need to prove that identity types are extensional.

We need to prove that the canonical function
\[ \Id_{\Hom(\Delta.B)}(f,g) \to \Hom(\Delta.\,\Id_B(f\,\overline{x},g\,\overline{x})) \] is an equivalence, where $\Delta = x_1 : A_1, \ldots x_n : A_n$.
Since we have $\Pi$-types this function is equivalent to the canonical function $\Id_{\Hom(\cdot . \Pi(\Delta,B))(\lambda(\lambda \overline{x}.f\overline{x}),\lambda(\lambda \overline {x}.g\overline{x}))} \to \Hom(\cdot . \Pi(\Delta, \Id(f\,\overline{x},g\,\overline{x})))$.
This function factors through the canonical function $\Hom(\cdot . \Id_{\Pi(\Delta,B)}(\lambda \overline{x}. f\,\overline{x}, \lambda \overline{x}. g\,\overline{x})) \to \Hom(\cdot . \Pi(\Delta, \Id(f\,\overline{x},g\,\overline{x})))$.
Since $\Pi$-types are extensional, we just need to show that the map $\Id_{\Hom(\cdot . \Pi(\Delta,B))(\lambda(\lambda \overline{x}.f\overline{x}),\lambda(\lambda \overline {x}.g\overline{x}))} \to \Hom(\cdot . \Id_{\Pi(\Delta,B)}(\lambda \overline{x}. f\,\overline{x}, \lambda \overline{x}. g\,\overline{x}))$ is an equivalence.
We prove that, more generally, for every closed type $A$ and terms $\Gamma \vdash a : \Hom(\cdot.A)$ and $\Gamma \vdash a' : \Hom(\cdot.A)$, the canonical function $\Id_{\Hom(\cdot . A)}(a,a') \to \Hom(\cdot . \Id_A(a\,(),a'\,()))$ is an equivalence.
This function will be denoted by $\fs{hap}_A$.

The type $A$ correspond to a diagram of the form
\[ \xymatrix{                           & E_A \ar@{->>}[d]^{p_A} \\
              F(\Gamma) \ar[r]_-{r_A}   & V_A
            } \]
and terms $a$ and $a'$ correspond to section $a,a' : F(\Gamma) \to E_A$ of $p_A$.
Types $\Id_{\Hom(\cdot . A)}(a,a')$ and $\Hom(\cdot . \Id_A(a\,(),a'\,()))$ are interpreted as the following diagrams:
\[ \xymatrix{                                                           & & P_{G(V_A)}(G(E_A)) \ar@{->>}[d]^q \\
              \Gamma \ar[rr]_-{\langle \varphi(a), \varphi(a') \rangle} & & G(E_A) \times_{G(V_A)} G(E_A)
            } \qquad
   \xymatrix{                                                   & & G(P_{V_A}(E_A)) \ar@{->>}[d]^{G(q')} \\
              \Gamma \ar[rr]_-{\varphi(\langle a, a' \rangle)}  & & G(E_A \times_{V_A} E_A)
            } \]
where $P_{X}(Y)$ is the path object for the diagonal $Y \to Y \times_{X} Y$.
Since the map $G(E_A) \to P_{G(V_A)}(G(E_A))$ is a trivial cofibration, we have a lift in the following diagram:
\[ \xymatrix{ G(E_A) \ar[d]_r \ar[rr]^-{G(r')}                      &                                               & G(P_{V_A}(E_A)) \ar@{->>}[d]^{G(q')} \\
              P_{G(V_A)}(G(E_A)) \ar@{->>}[r]_-q \ar@{-->}[urr]^s   & G(E_A) \times_{G(V_A)} G(E_A) \ar[r]_-\simeq  & G(E_A \times_{V_A} E_A)
            } \]
This lift is the interpretation of $\fs{hap}_A$.
To prove that the pullback of $s$ over $\Gamma$ is an equivalence which is stable under pullbacks, it is enough to show that $s$ is an equivalence.

There is a fibration $G(E_A \times_{V_A} E_A) \twoheadrightarrow G(E_A)$.
Moreover, the induced maps $X \to G(E_A)$ for every object in the diagram above are also fibrations.
By \rremark{semi-fib} and \rremark{semi-over}, we can think of this diagram as a diagram in the (non-right-proper) type-theoretic fibration category of fibrations over $G(E_A)$.
Since $r$ is trivial cofibration, \cite[Lemma~3.6]{shul-inv} implies that it is an equivalence.
The map $r'$ is also an equivalence for the same reasons.
By the 2-out-of-3 property, the map $P_{V_A}(E_A) \to E_A$ is a trivial fibration.
Since $G$ preserves trivial fibrations, the 2-out-of-3 property implies that $G(r')$ is an equivalence.
Finally, by the 2-out-of-3 property, $s$ is an equivalence.
\end{proof}

\begin{example}[cisinski-simp]
A \emph{Cisinski model structure} on a Grothendieck topos $\scat{C}$ is any cofibrantly generated model structure in which cofibrations are precisely monomorphisms.
For every such model structure and every left relative Quillen functor $F : \scat{B} \to \scat{C}$,
the contextually indexed contextual category $F^*(\scat{C}_!)$ has dependent $\Hom$-types, unit types, $\Sigma$-types, and identity types.

The existence of unit types, $\Sigma$-types, and identity types follows from \rprop{indexed-locally-small}.
By \rprop{localized-hom}, it is enough to prove that any Cisinski model structure is a left Bousfield localization of a right proper model structure.
Every such model structure is indeed a left Bousfield localization of the minimal model structure in which weak equivalences are generated by the empty set in an appropriate sense \cite[Th\'{e}or\`{e}me~3.9]{cisinski}.
The minimal model structure is right proper by \cite[Remarque~4.9]{cisinski}.
\end{example}

\subsection{Categories of functors}

Let $\scat{C}$ be a category with fibrations and let $\scat{J}$ be a small category.
A map of $\scat{C}^\scat{J}$ is a \emph{projective fibration} (resp., \emph{injective trivial cofibration}) if it is objectwise fibration (resp., objectwise trivial cofibration).
A map of $\scat{C}^\scat{J}$ is an \emph{injective fibration} if it has the right lifting property with respect to injective trivial cofibrations.
The \emph{injective} structure of a category with fibrations on $\scat{C}^\scat{J}$ has all injective fibrations as its fibrations.

\begin{lem}[inj-proj]
Let $\scat{C}$ be a cofibrantly generated category with fibrations and let $\scat{J}$ be a small category.
Then every injective fibration of $\scat{C}^\scat{J}$ is a projective fibration.
\end{lem}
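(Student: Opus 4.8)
The plan is to exploit the adjunction between $\scat{C}^\scat{J}$ and $\scat{C}$ given by evaluation and its left adjoint, and to reduce injective fibrancy to a lifting condition that can be checked on a generating set. Write $\I$ for the small set of maps generating the fibrations of $\scat{C}$, so that the fibrations of $\scat{C}$ are exactly the maps in $\Iinj$. Since trivial cofibrations are by definition the maps with the left lifting property against fibrations, we have $\I \subseteq \Icof$; that is, every map in $\I$ is a trivial cofibration. Consequently, to prove that a map of $\scat{C}$ is a fibration it suffices to check the right lifting property against $\I$.

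For each object $j \in \scat{J}$, the evaluation functor $\fs{ev}_j : \scat{C}^\scat{J} \to \scat{C}$ admits a left adjoint $F_j$, namely the left Kan extension along $j : \mathbf{1} \to \scat{J}$. It exists since $\scat{C}$ is cocomplete and is given objectwise by $(F_j A)_k = \coprod_{\Hom_{\scat{J}}(j,k)} A$. The key observation is that $F_j$ sends trivial cofibrations of $\scat{C}$ to injective trivial cofibrations of $\scat{C}^\scat{J}$: for a trivial cofibration $i$, each component $(F_j i)_k$ is a coproduct of copies of $i$, and the class of maps with the left lifting property against fibrations is closed under coproducts, so $(F_j i)_k$ is again a trivial cofibration. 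Hence $F_j i$ is objectwise a trivial cofibration, i.e. an injective trivial cofibration.

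Now let $f$ be an injective fibration of $\scat{C}^\scat{J}$, i.e. a map with the right lifting property against all injective trivial cofibrations, and fix $j \in \scat{J}$. For every $i \in \I$ the map $F_j i$ is an injective trivial cofibration by the previous paragraph, so $f$ lifts against $F_j i$. Transposing across the adjunction $F_j \dashv \fs{ev}_j$, this says exactly that $f_j = \fs{ev}_j(f)$ lifts against $i$. Thus $f_j$ has the right lifting property against every map of $\I$, so $f_j \in \Iinj$ is a fibration of $\scat{C}$. As this holds for all $j$, the map $f$ is an objectwise fibration, that is, a projective fibration.

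The only genuine content is the closure of trivial cofibrations under coproducts used in the second paragraph; this is the standard fact that maps with the left lifting property against a fixed class are stable under coproducts, and it is precisely where the lifting-property definition of trivial cofibration, rather than any finer notion, is essential. The remaining work is bookkeeping: confirming the existence and objectwise formula for $F_j$ and the transposition of lifting problems across $F_j \dashv \fs{ev}_j$, both routine once $\scat{C}$ is known to be cocomplete. I therefore expect no serious obstacle beyond assembling these ingredients in the correct order.
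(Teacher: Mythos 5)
Your proof is correct and follows essentially the same route as the paper: the adjunction $F_j \dashv \fs{ev}_j$ with $F_j(A) = \Hom(j,-)\cdot A$, the observation that $F_j$ carries trivial cofibrations to objectwise (injective) trivial cofibrations via closure of lifting classes under coproducts, and transposition of lifting problems. The only cosmetic difference is that you check lifting of $f_j$ against the generating set $\I$ directly (using $\I \subseteq \Icof$), whereas the paper checks it against all trivial cofibrations and then invokes saturation of the fibrations in a cofibrantly generated category; these amount to the same argument.
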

\begin{proof}
Let $j$ be an object of $\scat{J}$.
Then the functor $E_j : \scat{C}^\scat{J} \to \scat{C}$ of evaluating at $j$ has a left adjoint $F : \scat{C} \to \scat{C}^\scat{J}$, which is defined as $F(X) = \Hom(j,-) \cdot X$.
If $f : A \to B$ is a trivial cofibration of $\scat{C}$, then $F(f)$ is an injective trivial cofibration.
Thus, every injective fibration $p : X \twoheadrightarrow Y$ has the right lifting property with respect to $F(f)$.
By adjointness, $p_j : X_j \to E_j$ has the right lifting property with respect to $f$.
Since $\scat{C}$ is cofibrantly generated, fibrations are precisely the maps which have the right lifting property with respect to trivial cofibrations.
Thus, $p_j$ is a fibration.
\end{proof}

\begin{prop}[injective]
Let $\scat{C}$ be a complete cofibrantly generated type-theoretic semi-fibration category and let $\scat{J}$ be a small category.
Let $\Fib$ be a class of injective fibrations closed under pullbacks, identity morphisms, and compositions such that every map in $(\scat{C}^\scat{J}/\Gamma)_\fs{f}$ for every $\Gamma$ factors into an injective trivial cofibration followed by a fibration in $\Fib$.
Then $(\scat{C}^\scat{J},\Fib)$ is a type-theoretic semi-fibration category.
\end{prop}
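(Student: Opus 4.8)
The plan is to check the five conditions of \rdefn{ttsfc} for $(\scat{C}^\scat{J},\Fib)$; the ambient data of a category with fibrations is immediate, since $\scat{C}^\scat{J}$ inherits completeness from $\scat{C}$ (so it has a terminal object and all pullbacks) and $\Fib$ is closed under pullbacks, identities, and composition by hypothesis, which also supplies the second condition. The conceptual heart of the argument is the following identification of the trivial cofibrations of $(\scat{C}^\scat{J},\Fib)$, that is, the maps with the left lifting property against $\Fib$: I claim that a map between objects fibrant over some $\Gamma$ is such a trivial cofibration if and only if it is an injective trivial cofibration, i.e.\ an objectwise trivial cofibration. One direction is formal and needs no fibrancy hypothesis: an injective trivial cofibration lifts against every injective fibration, hence against $\Fib$, since $\Fib$ consists of injective fibrations. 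For the converse, given a trivial cofibration $i \colon A \to B$ in $(\scat{C}^\scat{J}/\Gamma)_\fs{f}$, I would use the factorization hypothesis to write $i = q \circ j$ with $j$ an injective trivial cofibration and $q \in \Fib$; because $i$ lifts against $q$, the usual retract argument realizes $i$ as a retract of $j$, and objectwise trivial cofibrations are closed under retracts.

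With this identification the factorization condition and the two stability conditions follow objectwise. The factorization condition is essentially the hypothesis itself, since the injective trivial cofibration it produces is already a trivial cofibration for $\Fib$ by the easy half of the identification. For the stability conditions, recall that by \rlem{inj-proj} every map in $\Fib$ is an objectwise fibration, and that pullbacks in $\scat{C}^\scat{J}$ are computed objectwise. Thus if $i$ is a trivial cofibration over $\Gamma$, the identification makes it objectwise a trivial cofibration; its pullback along a fibration, or its reindexing along a map $r \colon \Delta \to \Gamma$, is computed objectwise and is, object by object, the pullback of a trivial cofibration along a fibration of $\scat{C}$, respectively a reindexing of one, each a trivial cofibration in $\scat{C}$ by the corresponding conditions for $\scat{C}$ (here using that $A_k,B_k$ are fibrant over $\Gamma_k$). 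The resulting objectwise trivial cofibration is a trivial cofibration for $\Fib$ by the identification again.

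The main obstacle is the remaining condition, that every $p \in \Fib$ is exponentiable in $\scat{C}^\scat{J}$; unlike the others this does not reduce objectwise, because $\scat{C}$ need not be locally Cartesian closed and exponentiability in a functor category involves the functoriality in $\scat{J}$, not merely the individual fibres. Here I would construct the right adjoint $\Pi_p$ to $p^\ast$ by hand. By \rlem{inj-proj} each $p_k \colon A_k \to B_k$ is a fibration of $\scat{C}$, hence exponentiable, so the objectwise dependent products $\Pi_{p_k}$ exist; combining them with limits over the coslice categories $k \downarrow \scat{J}$, which exist as $\scat{C}$ is complete, I would set
\[ (\Pi_p X)_k \;=\; \lim_{(\alpha \colon k \to k') \,\in\, k \downarrow \scat{J}} B(\alpha)^\ast\, \Pi_{p_{k'}}(X_{k'}), \]
a limit in $\scat{C}/B_k$, and promote it to a functor over $B$ using the evident restriction maps between coslice categories. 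To verify the adjunction $p^\ast \dashv \Pi_p$ I would test against the objects $F_k Z \to B$ corresponding to maps $Z \to B_k$, where $F_k$ is the left adjoint to evaluation at $k$ from \rlem{inj-proj}; since every object of $\scat{C}^\scat{J}/B$ is a colimit of such objects and $\Hom$ turns these into limits, it suffices to match $\Hom_{/B}(F_k Z, \Pi_p X)$ with $\Hom_{/A}(p^\ast(F_k Z), X)$ naturally in $Z$. The right-hand side unwinds, using the coproduct description of $F_k Z$ and the objectwise adjunctions $p_{k'}^\ast \dashv \Pi_{p_{k'}}$, precisely into cones over the diagram defining $(\Pi_p X)_k$. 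This computation, together with the check of naturality in $k$, is the technical core; once it is in place all five conditions hold and $(\scat{C}^\scat{J},\Fib)$ is a type-theoretic semi-fibration category.
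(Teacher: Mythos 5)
Everything in your proposal except the exponentiability condition coincides with the paper's own proof: the paper establishes the same identification of trivial cofibrations between $\Fib$-fibrant objects over $\Gamma$ with injective (objectwise) trivial cofibrations, by the same factor-and-retract argument, and then reduces the remaining conditions to the corresponding conditions of $\scat{C}$ objectwise via \rlem{inj-proj}, exactly as you do. The divergence is the exponentiability of maps in $\Fib$: the paper does not construct $\Pi_p$ by hand but quotes \cite[Theorem~2.12]{comp-fact-tor}, which gives exponentiability in $\scat{C}^\scat{J}$ from completeness of $\scat{C}$ and objectwise exponentiability.

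Your hand construction, which replaces that citation, has a genuine gap: the formula
\[ (\Pi_p X)_k \;=\; \lim_{(\alpha \colon k \to k') \,\in\, k \downarrow \scat{J}} B(\alpha)^*\,\Pi_{p_{k'}}(X_{k'}) \]
cannot define the right adjoint. The decisive point is that the coslice $k \downarrow \scat{J}$ has the initial object $\id_k$, so whatever functorial structure the diagram is given, its limit is simply its value at $\id_k$, namely $\Pi_{p_k}(X_k)$: the formula collapses to the purely objectwise dependent product and retains none of the functoriality of $X$ in $\scat{J}$. Already for $\scat{C} = \Set$ and $\scat{J} = (0 \to 1)$ this fails: take $B$ terminal, $A = (\emptyset \to \{*\})$, $X = (\emptyset \to \emptyset)$ over $A$; then $\Pi_{p_0}(X_0) = \{*\}$ while $\Pi_{p_1}(X_1) = \emptyset$, and since there is no map $\{*\} \to \emptyset$, the formula does not even produce an object of $\Set^{\scat{J}}$, let alone a right adjoint. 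Relatedly, the ``evident restriction maps'' you invoke do not exist: for $\beta \colon k' \to k''$ there is no natural map between $B(\alpha)^*\Pi_{p_{k'}}(X_{k'})$ and $B(\beta\alpha)^*\Pi_{p_{k''}}(X_{k''})$ in either direction; the only natural comparison is the Beck--Chevalley map $B(\beta)^*\Pi_{p_{k''}}(X_{k''}) \to \Pi_{p_{k'}}(A(\beta)^*X_{k''})$, whose codomain is a mixed object. Correspondingly, when you unwind $\Hom_{/A}(p^*(F_kZ),X)$, the compatibility $X(\beta) \circ f_\alpha = f_{\beta\alpha} \circ (\id_Z \times A(\beta))$ on a family of maps $f_\alpha \colon Z \times_{B_{k'}} A_{k'} \to X_{k'}$ is an equalizer condition, not a cone condition over your diagram; the correct object is the end
\[ (\Pi_p X)_k \;=\; \mathrm{eq}\Bigl(\ \prod_{\alpha \colon k \to k'} B(\alpha)^*\Pi_{p_{k'}}(X_{k'})\ \rightrightarrows\ \prod_{k \xrightarrow{\alpha} k' \xrightarrow{\beta} k''} B(\alpha)^*\Pi_{p_{k'}}\bigl(A(\beta)^*X_{k''}\bigr)\ \Bigr) \]
computed in $\scat{C}/B_k$, one arrow induced by the adjuncts $X_{k'} \to A(\beta)^*X_{k''}$ of the structure maps of $X$, the other by the Beck--Chevalley maps. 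With this replacement (or simply with the paper's citation) the rest of your argument goes through; as written, however, the exponentiability condition is not established.
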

\begin{proof}
By \rlem{inj-proj}, fibrations in $\Fib$ are projective fibrations.
Since $\scat{C}$ is complete, fibrations in $\Fib$ are projective fibrations, and fibrations of $\scat{C}$ are exponentiable, \cite[Theorem~2.12]{comp-fact-tor} implies that fibrations of $\scat{C}^\scat{J}$ are also exponentiable.
Since fibrations in $\Fib$ are projective fibrations, to prove that other conditions hold, it is enough to prove that a map in $(\scat{C}^\scat{J}/\Gamma)_\fs{f}$ is a trivial cofibration if and only if it is an injective trivial cofibration.
The ``if'' direction holds by the assumption that fibrations are injective.
Conversely, if $i : A \to B$ is a map in $(\scat{C}^\scat{J}/\Gamma)_\fs{f}$ that has the left lifting property with respect to fibrations, then we can factor it into an injective trivial cofibration $i' : A \to A'$ followed by a fibration $A' \twoheadrightarrow B$.
Then $i$ is a retract of $i'$ which implies that it is also an injective trivial cofibration.
\end{proof}

\begin{cor}[injective-combinatorial]
Let $\scat{C}$ be a combinatorial type-theoretic semi-fibration category and let $\scat{J}$ be a small category.
Then $\scat{C}^\scat{J}$ is a combinatorial injective type-theoretic semi-fibration category.
If $\scat{C}$ is right proper, then so is $\scat{C}^\scat{J}$.
If $\scat{C}$ is a right proper type-theoretic combinatorial model category in which all object are cofibrant, then $\scat{C}^\scat{J}$ with its injective model structure is an extensional combinatorial type-theoretic model category.
\end{cor}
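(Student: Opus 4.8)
The plan is to derive all three assertions from \rprop{injective}, taking $\Fib$ to be the class of all injective fibrations, together with standard facts about combinatorial categories. For the first assertion I would verify the hypotheses of \rprop{injective}. Since $\scat{C}$ is combinatorial it is locally presentable (hence complete and cocomplete) and cofibrantly generated, so $\scat{C}^\scat{J}$ is again locally presentable. The class of all injective fibrations is by definition the class of maps right orthogonal to the injective trivial cofibrations, so it is automatically closed under pullbacks, identities, and compositions. The one substantive point is the required factorization of every map of $(\scat{C}^\scat{J}/\Gamma)_\fs{f}$ into an injective trivial cofibration followed by an injective fibration. Because trivial cofibrations and fibrations in a slice agree with those in the ambient category, it suffices to produce this factorization in $\scat{C}^\scat{J}$ itself, and for that I would run the small object argument once the injective (that is, objectwise) trivial cofibrations are known to be generated by a set. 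This last fact is the recognition theorem for combinatorial categories: the objectwise trivial cofibrations form a weakly saturated class (colimits in $\scat{C}^\scat{J}$ are computed objectwise and trivial cofibrations in $\scat{C}$ are closed under pushout, transfinite composition, and retract) which is accessible (the trivial cofibrations of $\scat{C}$ are accessible, being cofibrantly generated, and imposing an objectwise condition over the small category $\scat{J}$ preserves accessibility), and any such class is the weak saturation of a set. The small object argument then yields the factorization and exhibits the injective fibrations as the maps right orthogonal to a set, so $\scat{C}^\scat{J}$ is combinatorial. With these inputs \rprop{injective} gives that $(\scat{C}^\scat{J},\Fib)$ is a type-theoretic semi-fibration category.

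For the second assertion I would first note that \rprop{injective} already supplies exponentiability of injective fibrations (condition~\ref{it:ttsfc-pi} of \rdefn{ttsfc}), so for each injective fibration $p$ the right adjoint $\Pi_p$ to $p^*$ exists; the content of right properness is that $\Pi_p$ preserves injective fibrations. Since the class of fibrations is saturated in any cofibrantly generated category, a map is an injective fibration exactly when it is right orthogonal to the injective trivial cofibrations, and by the adjunction $p^* \dashv \Pi_p$ one checks that $\Pi_p$ preserves injective fibrations if and only if $p^*$ preserves injective trivial cofibrations. I would then use that the same saturation argument inside $\scat{C}$ identifies right properness of $\scat{C}$ with the unrestricted statement that pullbacks of trivial cofibrations along fibrations in $\scat{C}$ are again trivial cofibrations. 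As injective fibrations, injective trivial cofibrations, and pullbacks in $\scat{C}^\scat{J}$ are all computed objectwise, $p^*(i)$ is objectwise such a pullback and is therefore an objectwise trivial cofibration whenever $\scat{C}$ is right proper. Hence $p^*$ preserves injective trivial cofibrations, $\Pi_p$ preserves injective fibrations, and $\scat{C}^\scat{J}$ is right proper.

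For the third assertion, when $\scat{C}$ is a combinatorial model category the injective model structure on $\scat{C}^\scat{J}$ exists and is combinatorial, with cofibrations and weak equivalences objectwise and fibrations the injective fibrations. I would check it is a type-theoretic model category: exponentiability of fibrations comes from the first assertion, and pullbacks of objectwise trivial cofibrations are objectwise cofibrations because this holds in $\scat{C}$. All objects are objectwise, hence injectively, cofibrant. To obtain extensionality I would invoke the criterion recorded in the example above (following \rdefn{ttsfc}): for a type-theoretic model category it suffices that its trivial fibrations be homotopy equivalences. An injective trivial fibration $p$ is, by \rlem{inj-proj}, objectwise a trivial fibration of $\scat{C}$; since every object is cofibrant, lifting $\varnothing \to X$ against $p$ \emph{inside the functor category} produces a natural section $s$, and a further lift produces a natural fibrewise homotopy from $s \circ p$ to the identity, so $p$ is a homotopy equivalence. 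The criterion then yields extensionality of $\scat{C}^\scat{J}$.

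I expect the main obstacle to be the factorization step in the first assertion, that is, verifying that the injective trivial cofibrations are cofibrantly generated so that the small object argument applies; although this is standard combinatorial-category technology, it is where all the local-presentability hypotheses are actually consumed. A secondary subtlety, easy to overlook, is that right properness must be used in its unrestricted form (pullbacks of trivial cofibrations along fibrations, with no fibrancy hypothesis on the objects), which is strictly stronger than condition~(3) of \rdefn{ttsfc} and is exactly what the objectwise argument in the second assertion requires.
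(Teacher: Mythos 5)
Your proposal is correct and its skeleton coincides with the paper's proof: the first claim is reduced to \rprop{injective}, whose only substantive hypothesis is the factorization of maps into an injective trivial cofibration followed by an injective fibration; right properness is obtained by translating the paper's definition (via the adjunction $p^* \dashv \Pi_p$ and saturation) into pullback-stability of objectwise trivial cofibrations along objectwise fibrations, which holds componentwise in $\scat{C}$; and the third claim rests on all objects of $\scat{C}^\scat{J}$ being injectively cofibrant. The two points where you diverge are presentational inverses of what the paper does. For the factorization, the paper simply cites \cite[Lemma~A.2.8.3]{lurie-topos}, whereas you re-derive that lemma from the combinatorial recognition principle (weak saturation plus accessibility of the objectwise trivial cofibrations, then the small object argument); this is the same fact, and your version correctly identifies it as the step consuming local presentability. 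For extensionality, the paper argues inline that, all objects being cofibrant, trivial fibrations are exactly the maps with the right lifting property with respect to cofibrations and that $\Pi$ preserves this class because cofibrations are objectwise and stable under pullback; you instead verify that injective trivial fibrations admit sections and fibrewise homotopies (hence are homotopy equivalences) and invoke the criterion stated in the example following \rdefn{ttsfc}. Your route buys a proof that leans only on facts the paper states explicitly, while the paper's route makes visible the pullback-stability of cofibrations it actually uses; your closing observation that right properness of $\scat{C}$ must be taken in its unrestricted form, strictly stronger than condition~(3) of \rdefn{ttsfc}, is accurate and is precisely how the paper's second step reads as well.
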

\begin{proof}
By \rprop{injective}, it is enough to prove that every map factors into an injective trivial cofibration followed by an injective fibration which is true by \cite[Lemma~A.2.8.3]{lurie-topos}.
If $\scat{C}$ is right proper, then the fact that $\scat{C}^\scat{J}$ is also right proper follows from the fact that injective trivial cofibrations are stable under pullbacks along fibrations which is true since injective fibrations are objectwise fibrations and pullbacks of trivial cofibrations are stable under pullbacks along fibrations in $\scat{C}$ by the right properness.
Finally, if $\scat{C}$ is a model category in which all objects are cofibrant, then objects of $\scat{C}^\scat{J}$ are also cofibrant.
It follows that trivial fibrations are precisely maps with the right lifting property with respect to cofibrations.
Thus, $\Pi$ preserves trivial fibrations whenever cofibrations are stable under pullbacks which is true since cofibrations are precisely objectwise cofibrations.
\end{proof}

\begin{example}
If $\scat{M}$ is right proper type-theoretic model category and all objects of $\scat{M}$ are cofibrant, then this is also true for $\scat{M}^\scat{J}$, which implies that it is extensional.
\end{example}

\begin{example}
If $\scat{C}$ is a combinatorial type-theoretic semi-fibration category and $F : \scat{J} \to \scat{I}$ is a functor between small categories,
then the functor $F^* : \scat{C}^\scat{I} \to \scat{C}^\scat{J}$ (defined by postcomposition with $F$) is a left Quillen functor between combinatorial type-theoretic semi-fibration categories with injective structures.
If $\scat{C}$ is also a model category in which all objects are cofibrant, then this adjunction is extensional.
We will denote the contextually indexed contextual category $(F^*)^*((\scat{C}^\scat{J})_!)$ simply by $F^*((\scat{C}^\scat{J})_!)$.
\end{example}

\subsection{Products}

Let $F$ be a functor between type-theoretic semi-fibration categories $\scat{B}$ and $\scat{C}$.
We will say that a map of $\scat{C}$ is an \emph{$F$-fibration} if it is a pullback of the map $F(f)$ for some fibration $f$.

\begin{prop}[products]
Let $F : \scat{B} \to \scat{C}$ be a functor between categories with fibrations.
Suppose that $F$ preserves pullbacks along fibrations and that, for every $F$-fibration $g : A \to B$, pullbacks of $g$ exist and the pullback functor $g^* : \scat{C}/B \to \scat{C}/A$ has a right adjoint $\Pi_g : \scat{C}/A \to \scat{C}/B$ which maps fibrations over $A$ to fibrations over $B$
Then $F^*(\scat{C}_!)$ has dependent products.
\end{prop}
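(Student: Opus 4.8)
The plan is to imitate the construction of $\Pi$-types in \rlem{cartesian-closed}, but to quantify along the image under $F$ of a base projection rather than along a fibration of $\scat{C}$. A base type $\Gamma \vdash I$ is a diagram $r_I : \Gamma \to V_I$ with a fibration $p_I : E_I \twoheadrightarrow V_I$ in $\scat{B}$, and the extended context $\Gamma, i : I$ is the pullback $\Gamma.I = \Gamma \times_{V_I} E_I$, so the projection $\pi : \Gamma.I \to \Gamma$ is a pullback of $p_I$. Since $F$ preserves pullbacks along fibrations, $F(\Gamma.I) = F(\Gamma) \times_{F(V_I)} F(E_I)$ and $F(\pi)$ is the pullback of the $F$-fibration $F(p_I)$ along $F(r_I)$; in particular $F(\pi)$ is itself an $F$-fibration. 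This is the map along which the dependent product quantifies, and the role of the hypothesis is exactly to supply $\Pi_{F(p_I)}$ (and, after pullback, $\Pi_{F(\pi)}$) preserving fibrations.

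First I would assemble the universal data, which depend only on the local universes $(V_I,p_I)$ and $(V_B,p_B)$ and hence not on $\Gamma$ or $\Delta$. Set $V_{\prod}^u = \Pi_{F(p_I)}(F(E_I) \times V_B)$ over $F(V_I)$; let $p_I^u : E_I^u \twoheadrightarrow V_{\prod}^u$ be the pullback of $F(p_I)$ along $V_{\prod}^u \to F(V_I)$, which is again an $F$-fibration since $F$-fibrations are stable under pullback; and let $p_B^u : E_B^u \twoheadrightarrow E_I^u$ be the pullback of $p_B$ along $E_I^u \xrightarrow{\fs{ev}} F(E_I) \times V_B \xrightarrow{\pi_2} V_B$. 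Because $p_I^u$ is an $F$-fibration, the hypothesis gives that $\Pi_{p_I^u}(p_B^u)$ is a fibration over $V_{\prod}^u$; this is the universal total space. Note that, in contrast to the second clause of \rlem{cartesian-closed}, no factorization through a subcategory is required here, because $\Pi_g$ preserves fibrations for \emph{every} $F$-fibration $g$.

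Next I would produce the classifying map. A type $\Gamma, i : I \mid \Delta \vdash B$ with $i \notin \mathrm{FV}(\Delta)$ is a map $r_B$ out of the total space of $\Delta$ over $F(\Gamma.I)$ together with $p_B$; writing $\delta$ for the projection of the total space $F(\Gamma).\Delta$ over $F(\Gamma)$ and using $i \notin \mathrm{FV}(\Delta)$ together with preservation of pullbacks along fibrations, one checks that the domain of $r_B$ is exactly $(F(\Gamma).\Delta) \times_{F(V_I)} F(E_I)$, the pullback taken along $F(r_I) \circ \delta$. Hence $r_B$ is the adjoint transpose, under $F(p_I)^* \dashv \Pi_{F(p_I)}$, of a map $r_{\prod} : F(\Gamma).\Delta \to V_{\prod}^u$ lying over $F(r_I) \circ \delta$, and I would take $(r_{\prod}, \Pi_{p_I^u}(p_B^u))$ as the interpretation of $\Gamma \mid \Delta \vdash \prod_{i : I} B$. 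The abstraction $\lambda i.\,b$ and application $f\,j$ are then defined exactly as in the proof of \rlem{cartesian-closed}, via the unit and counit of $(p_I^u)^* \dashv \Pi_{p_I^u}$ and the evaluation map, with the $\beta$- and $\eta$-rules reducing to the triangle identities.

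The step I expect to be the main obstacle is stability under reindexing, and this is precisely where the local universes method pays off: the objects $V_{\prod}^u$, $p_I^u$, $p_B^u$, and $\Pi_{p_I^u}(p_B^u)$ are built from the fixed universal fibrations and are independent of $\Gamma$ and $\Delta$, so a base substitution $r : \Gamma' \to \Gamma$ alters only $r_{\prod}$, and it alters it correctly because the transposition defining $r_{\prod}$ is natural and $F$ preserves the relevant pullbacks. This is the same argument that makes the first clause of \rlem{cartesian-closed} stable, so the verification carries over without change.
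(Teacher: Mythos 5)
Your proposal is correct and follows essentially the same route as the paper's proof: your $V_\Pi^u = \Pi_{F(p_I)}(F(E_I)\times V_B)$, $p_I^u$, $p_B^u$, and $\Pi_{p_I^u}(p_B^u)$ are exactly the paper's $V_\Pi$, $p$, $q$, and $p_\Pi = \Pi_p(Z)$, and your classifying map obtained by adjoint transposition is the paper's pairing $[r_I',r_B]$, with abstraction, application, and stability handled by the same adjunction and naturality arguments.
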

\begin{proof}
First, we need to describe the interpretation of the formation rule:
\begin{center}
\AxiomC{$\Gamma, i : I \mid \Delta \vdash B$}
\RightLabel{, $i \notin \mathrm{FV}(\Delta)$}
\UnaryInfC{$\Gamma \mid \Delta \vdash \prod_{i : I} B$}
\DisplayProof
\end{center}
Suppose that we have diagrams depicted below which correspond to types $\Gamma \vdash I$ and $\Gamma, i : I \mid \Delta \vdash B$.
\[ \xymatrix{                       & E_I \ar@{->>}[d]^{p_I} \\
              \Gamma \ar[r]_{r_I}   & V_I
            } \qquad
   \xymatrix{                               & E_B \ar@{->>}[d]^{p_B} \\
              \Gamma.I.\Delta \ar[r]_{r_B}  & V_B
            } \]
where $\Gamma.I.\Delta$ is the following pullback:
\[ \xymatrix{ \Gamma.I.\Delta \ar[r] \ar@{->>}[d] \pb   & \Delta \ar@{->>}[d]^{p_\Delta} \\
              F(\Gamma.I) \ar[r] \ar[d] \pb             & F(\Gamma) \ar[d]^{F(r_I)} \\
              F(E_I) \ar[r]_{F(p_I)}                    & F(V_I)
            } \]
We define $V_\Pi$ as $\Pi_{F(p_I)}(F(E_I) \times V_B)$.
Consider the following diagrams:
\[ \xymatrix{ V_\Pi \times_{F(V_I)} F(E_I) \ar[r] \ar[d]_p \pb  & F(E_I) \ar[d]^{F(p_I)} \\
              V_\Pi \ar[r]                                      & F(V_I)
            } \qquad
   \xymatrix{ Z \ar[r] \ar@{->>}[d]_q \pb                       & F(E_I) \times E_B \ar@{->>}[d]^{\id_{F(E_I)} \times p_B} \\
              V_\Pi \times_{F(V_I)} F(E_I) \ar[r]_-{\fs{ev}}    & F(E_I) \times V_B
            } \]
We define $p_\Pi : E_\Pi \twoheadrightarrow V_\Pi$ as $\Pi_p(Z)$.
To define a map $\Delta \to V_\Pi$, it is enough to specify a map $r_I' : \Delta \to F(V_I)$ together with a map $r_B' : \Delta.I \to V_B$, where $\Delta.I$ is the pullback of $r_I'$ and $F(p_I)$.
Let $r_I' = F(r_I) \circ p_\Delta$ and $r_B' = r_B$.
These maps determine a map $[r_I',r_B] : \Delta \to V_\Pi$.
We define the interpretation of $\prod_{i : I} B$ as the following diagram:
\[ \xymatrix{                               & E_\Pi \ar@{->>}[d]^{p_\Pi} \\
              \Delta \ar[r]_{[r_I',r_B]}    & V_\Pi
            } \]

Now, let us describe the interpretation of the introduction rule:
\begin{center}
\AxiomC{$\Gamma, i : I \mid \Delta \vdash b : B$}
\RightLabel{, $i \notin \mathrm{FV}(\Delta)$}
\UnaryInfC{$\Gamma \mid \Delta \vdash \lambda i.\,b : \prod_{i : I} B$}
\DisplayProof
\end{center}
The interpretation of $b$ is a section $b : \Gamma.I.\Delta \to E_B$ of $p_B$.
Note that we have the following diagram in which the composition of bottom maps equals to $r_I'$:
\[ \xymatrix{ \Gamma.I.\Delta \ar[r]^-s \ar[d] \pb  & V_\Pi \times_{F(V_I)} F(E_I) \ar[r] \ar[d]_p \pb  & F(E_I) \ar[d]^{F(p_I)} \\
              \Delta \ar[r]_{[r_I',r_B]}            & V_\Pi \ar[r]                                      & F(V_I)
            } \]
An interpretation of $\lambda i.\,b$ is a section of $\Delta \to E_\Pi$ of $p_\Pi$.
To define such a section, it is enough to specify a section $b' : \Gamma.I.\Delta \to Z$ of $q$ over $s$.
Since $q$ is a pullback of $p_B$, this is equivalent to specifying a section of $p_B$ and we can take this section to be $b$.

Finally, we need to define the interpretation of the application:
\begin{center}
\AxiomC{$\Gamma \mid \Delta \vdash f : \prod_{i : I} B$}
\AxiomC{$\Gamma \vdash j : I$}
\BinaryInfC{$\Gamma \mid \Delta \vdash f\,j : B[j/i]$}
\DisplayProof
\end{center}
Let $f : \Delta \to E_\Pi$ be a section of $p_\Pi$ and let $j : \Gamma \to E_I$ be a section of $p_I$.
We define the interpretation of $f\,j$ as $f' \circ j''$, where $f' : \Gamma.I.\Delta \to E_B$ is a section of $p_B$ corresponding to $f$ as discussed before and $j''$ is the following pullback:
\[ \xymatrix{ \Delta \ar[r]^-{j''} \ar@{->>}[d]_{p_\Delta} \pb  & \Gamma.I.\Delta \ar[r] \ar@{->>}[d] \pb   & \Delta \ar@{->>}[d]^{p_\Delta} \\
              F(\Gamma) \ar[r]_{F(j')}                          & F(\Gamma.I) \ar[r]                        & F(\Gamma)
            } \]
where $j' : \Gamma \to \Gamma.I$ is a section of $\Gamma.I \twoheadrightarrow \Gamma$ corresponding to $j$.

It is easy to verify the stability under substitutions of the constructions that we described.
We also need to prove that $\beta$ and $\eta$ equivalences hold, but this follows from the fact that functions that we used to go from sections of $p_\Pi$ and $p_B$ and back are mutually inverse.
\end{proof}

\begin{example}[products]
Let $\scat{C}$ be a complete right proper cofibrantly generated type-theoretic semi-fibration category.
Then, for every functor $F : \scat{J} \to \scat{I}$ between small categories, the contextually indexed contextual category $F^*((\scat{C}^\scat{J})_!)$ has dependent products,
where $\scat{C}^\scat{J}$ and $\scat{C}^\scat{I}$ are equipped with the injective structures.

Indeed, $F^* : \scat{C}^\scat{I} \to \scat{C}^\scat{J}$ preserves all limits.
By \rlem{inj-proj}, $F^*$-fibrations are objectwise fibrations.
Since fibrations are exponentiable in $\scat{C}$, \cite[Theorem~2.12]{comp-fact-tor} and \cite[Corollary~2.6]{comp-fact-tor} imply that $F^*$-fibrations are also exponentiable in $\scat{C}^\scat{J}$.
The fact that $\Pi_g$ preserves fibrations is equivalent to the fact that injective trivial cofibrations are stable under pullbacks along $F^*$-fibrations.
This follows from the facts that $\scat{C}$ is right proper and that $F^*$-fibrations are objectwise fibrations.
\end{example}

\section{Extension types}
\label{sec:ext}

Extension types were defined in \cite{riehl-dhott}.
In this section, we describe an analogous construction in ordinary homotopy type theory and in indexed type theory.

Extension types generalize $\Pi$-types (and products in the indexed case).
Semantically, if $j : U \to V$ is a map, $p : A \twoheadrightarrow V$ is a fibration, and $a : U \to A$ is a section of $p$ over $j$,
then the extension object $\langle \Pi_V(p)\!\!\mid^j_a \rangle$ is the object of sections of $p$ which restrict to $a$.
It is easy to describe their syntax in ordinary type theory:
\begin{center}
\AxiomC{$\Gamma, y : V \vdash A$}
\AxiomC{$\Gamma, x : U \vdash a : A[j\,x/y]$}
\BinaryInfC{$\Gamma \vdash \langle \Pi_{y : V} A \mid_{x.a} \rangle$}
\DisplayProof
\end{center}
\smallskip

\begin{center}
\AxiomC{$\Gamma, y : V \vdash a : A$}
\UnaryInfC{$\Gamma \vdash \lambda y.a : \langle \Pi_{y : V} A \mid_{x.a[ix/y]} \rangle$}
\DisplayProof
\qquad
\AxiomC{$\Gamma \vdash f : \langle \Pi_{y : V} A \mid_{x.a} \rangle$}
\AxiomC{$\Gamma \vdash v : V$}
\BinaryInfC{$\Gamma \vdash \fs{app}_{x.a}(f,v) : A[v/y]$}
\DisplayProof
\begin{align*}
\fs{app}_{x.a'}(\lambda y.a,v) & = a[v/y] \\
\lambda y.\,\fs{app}_{x.a}(f,y) & = f \\
\fs{app}_{x.a}(f,j\,u) & = a[u/x]
\end{align*}
\end{center}

The problem is that the object $U$ is often not representable in the syntax.
Of course, we might add it explicitly to the syntax, but it might be impossible to capture its universal property strictly.
This problem can be solved if $U$ is a finite colimit in which all objects and arrows are representable and equality between arrows holds strictly in the syntax.
Let $U$ be such a colimit $\fs{colim}_{i \in \scat{I}} U_i$ and suppose that, for every $i \in \scat{I}$, the composite $U_i \to \fs{colim}_{i \in \scat{I}} \xrightarrow{j} V$, denoted by $j_i$, is also representable in the syntax.
Then the extension type is written as $\langle \Pi_{y : V} A \mid_{x_1.a_1, \ldots x_n.a_n} \rangle$, where $n$ is the number of objects of $\scat{I}$.
Similarly, $\fs{app}_{x.a}(f,v)$ is replaced with $\fs{app}_{x_1.a_1, \ldots x_n.a_n}(f,v)$.
The single premise $\Gamma, x : U \vdash a : A[j\,x/y]$ in the first rule is replaced with premises $\Gamma, x_i : U_i \vdash a_i : A[j_i\,x_i/y]$ for each $i \in \scat{I}$.
We also put the condition that terms $a_i$ agree with each other, that is, for every map $f : i \to i'$ of $\scat{I}$, we have a strict equality in the premise $\Gamma, x_i : U_i \vdash a_i \equiv a_{i'}[f\,x_i/x_{i'}] : A[j_i\,x_i/y]$.
The final equation is replaced with equations $\fs{app}_{x_1.a_1, \ldots x_n.a_n}(f,j_i\,u_i) = a_i[u_i/x_i]$ for each $i \in \scat{I}$.
Other rules and equations do not change.

One example of an extension type is the type of extensions along a map of the form $1 \amalg 1 \to \Delta^1$, where $1$ is the terminal object and $\Delta^1$ is some object.
The type of extensions in $A$ along such a map is the type of morphisms in the simplicial space (or $\infty$-category) $A$ in a version of the type theory described in \cite{riehl-dhott} if $\Delta^1$ is interpreted appropriately.
On the other hand, we can interpret $\Delta^1$ as a contractible type and add a rule to the syntax which implies that it is contractible.
Then extension types along $1 \amalg 1 \to \Delta^1$ are equivalent to (heterogeneous) identity types.
Such extension types were defined and studied in \cite[Subsection~3.2]{alg-models}.
We will call them \emph{path types} since they are literally types of paths (that is, maps from the interval) between two specified points.

Path types can be used to give a convenient description of higher inductive types.
For example, let $f : A \to B$ and $g : A \to C$ be a pair of maps.
Then their pushout $B \amalg_A C$ has three constructors: $\fs{inl} : B \to B \amalg_A C$, $\fs{inr} : C \to B \amalg_A C$, and $\fs{glue} : A \to \Delta^1 \to B \amalg_A C$.
The last constructor satisfies equations $\fs{glue}\,a\,j_0 = f\,a$ and $\fs{glue}\,a\,j_1 = g\,a$, where $[j_0,j_1] = j : 1 \amalg 1 \to \Delta^1$.
The eliminator is also easy to describe:
given a dependent type $D$ over $w : B \amalg_A C$ and functions $d_1 : \Pi_{y : B} D[\fs{inl}\,y/w]$, $d_2 : \Pi_{z : C} D[\fs{inr}\,z/w]$,
and $d_3 : \Pi_{x : A} \Pi_{i : \Delta^1} D[\fs{glue}\,x\,i/w]$ satisfying equations $d_3\,a\,j_0 = d_1\,(f\,a)$ and $d_3\,a\,j_1 = d_2\,(g\,a)$,
there is a section of $D$.
It is easy to show that this description of pushouts is equivalent to the standard one.

Now, after this digression, let us go back to the discussion of extension types.
Of course, we can use them in the indexed type theory.
We can add them to the base theory and to the indexed theory, but there is a third option.
We can add the type of extension in an indexed type along a base map.
If ordinary extension types generalize $\Pi$-types, then these extension types generalize products.
It should be clear how to modify the rules to get such extension types.
Let us just give the first rule as an example (we use the version with the single type $U$ to simplify the notation):
\begin{center}
\AxiomC{$\Gamma, y : V \mid \Delta \vdash A$}
\AxiomC{$\Gamma, x : U \mid \Delta \vdash a : A[j\,x/y]$}
\RightLabel{, $x,y \notin \mathrm{FV}(\Delta)$}
\BinaryInfC{$\Gamma \mid \Delta \vdash \langle \prod_{y : V} A \mid_{x.a} \rangle$}
\DisplayProof
\end{center}

To describe the interpretation of extension types, we need to introduce a new notion.
We will say that a category with fibrations satisfies \emph{the pushout-product axiom} with respect to a map $j : U \to V$ if $U$ and $V$ are exponentiable and,
for every fibration $X \twoheadrightarrow Y$, the map $X^V \to Y^V \times_{Y^U} X^U$ is a fibration.

\begin{remark}
Suppose that the class of fibrations of $\scat{C}$ is saturated and objects $U$ and $V$ are exponentiable.
Then $\scat{C}$ satisfies the pushout-product axiom with respect to a map $j : U \to V$ if and only if,
for every trivial cofibration $i : A \to B$, the map $B \times U \amalg_{A \times U} A \times V \to B \times V$ is also a trivial cofibration.
This condition holds if $\scat{C}$ is a Cartesian closed model category and $j$ is a cofibration.
\end{remark}

\begin{prop}[extension-types]
If a category with fibrations $\scat{C}$ satisfies the pushout-product axiom with respect to a map $j : U \to V$, then it has extension types along $j$.
If moreover $U = \fs{colim}_{i \in \scat{I}} U_i$ and the functor $X \times -$ preserves this colimit for every $X$, then $\scat{C}$ has the modified version of extension types based on this decomposition of $U$.
\end{prop}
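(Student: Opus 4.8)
The plan is to interpret extension types in the contextual category $\scat{C}_!$ by the local universes recipe, transposing all data along the exponential adjunctions $- \times V \dashv (-)^V$ and $- \times U \dashv (-)^U$ (which exist since $U$ and $V$ are exponentiable). Recall that a type $\Gamma, y : V \vdash A$ is a pair $(r_A : \Gamma \times V \to V_A,\ p_A : E_A \twoheadrightarrow V_A)$, and a section $\Gamma, x : U \vdash a : A[j\,x/y]$ is a map $a : \Gamma \times U \to E_A$ with $p_A \circ a = r_A \circ (\id \times j)$. Transposing, these become a map $\widehat{r_A} : \Gamma \to V_A^V$ and a map $\widehat{a} : \Gamma \to E_A^U$ satisfying $p_A^U \circ \widehat{a} = \fs{res}_j \circ \widehat{r_A}$, where $\fs{res}_j = (-)^j$ denotes restriction along $j$. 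Thus the pair $(\widehat{r_A}, \widehat{a})$ is exactly a map $r_\Pi : \Gamma \to V_\Pi$, where I set the universe to be the pullback $V_\Pi := V_A^V \times_{V_A^U} E_A^U$ (formed over $\fs{res}_j : V_A^V \to V_A^U$ and $p_A^U : E_A^U \to V_A^U$). For the fibration I take the pushout--product map $p_\Pi := \langle p_A^V, \fs{res}_j \rangle : E_A^V \to V_\Pi$, which is a fibration precisely by the pushout--product axiom applied to $p_A$. The interpretation of $\langle \Pi_{y : V} A \mid_{x.a} \rangle$ is then $(r_\Pi, p_\Pi)$.

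First I would check that this interprets the formation rule correctly: the context extension is the pullback of $p_\Pi$ along $r_\Pi$, whose fibre over a point of $\Gamma$ consists of those $\tilde s : V \to E_A$ with $p_A \circ \tilde s = r_A$ and $\tilde s \circ j = a$ --- that is, exactly the sections of $A$ over $V$ restricting to $a$ over $U$. Stability under substitution is automatic in the local universes style, since $V_\Pi$ and $p_\Pi$ depend only on the universe datum $p_A$, and the transpose of $r_A \circ (\sigma \times \id)$ is $\widehat{r_A} \circ \sigma$; hence a substitution $\sigma : \Gamma' \to \Gamma$ sends $r_\Pi$ to $r_\Pi \circ \sigma$ and leaves $p_\Pi$ unchanged.

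Next I would interpret the term formers. Given $\Gamma, y : V \vdash b : A$, i.e.\ a section $b : \Gamma \times V \to E_A$, I set $\lambda y.b := \widehat{b} : \Gamma \to E_A^V$; one checks $p_\Pi \circ \widehat{b} = \langle \widehat{r_A}, \fs{res}_j \circ \widehat{b}\rangle = r_\Pi$, so this is a section of the required extension type (with restriction $b[j\,x/y]$). For application, a term $f : \langle \Pi_{y:V} A \mid_{x.a}\rangle$ transposes to a section $\tilde f : \Gamma \times V \to E_A$ of $A$, and a term $v : V$ is a map $v : \Gamma \to V$; I set $\fs{app}_{x.a}(f,v) := \tilde f \circ \langle \id_\Gamma, v\rangle$. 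The $\beta$-rule and $\eta$-rule are then the two triangle identities of the adjunction $- \times V \dashv (-)^V$, and the restriction equation $\fs{app}_{x.a}(f, j\,u) = a[u/x]$ follows from the second component of $p_\Pi \circ f = r_\Pi$, which says exactly $\tilde f \circ (\id \times j) = a$. The point demanding the most care is that all three equations must hold strictly; this is where I expect the bookkeeping to be most delicate, although the absoluteness of the exponentials makes each identity reduce to a triangle identity or to the defining pullback square of $V_\Pi$.

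For the second statement, the only change is how the restriction datum is presented. Given the decomposition $U = \fs{colim}_{i \in \scat{I}} U_i$ and the hypothesis that $\Gamma \times -$ preserves it, a family of sections $\Gamma, x_i : U_i \vdash a_i : A[j_i\,x_i/y]$ that is compatible in the sense imposed by the modified rule assembles into a single map $a : \Gamma \times U = \fs{colim}_i (\Gamma \times U_i) \to E_A$, reducing the formation and introduction data to the situation of the first part. The modified application equations $\fs{app}_{x_1.a_1, \ldots, x_n.a_n}(f, j_i\,u_i) = a_i[u_i/x_i]$ then follow by precomposing the identity $\tilde f \circ (\id \times j) = a$ with the coprojection $\iota_i : U_i \to U$, using $j_i = j \circ \iota_i$. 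The main obstacle in this part is purely the colimit bookkeeping: one must verify that the cocone compatibility in the premises is equivalent to the universal property that produces $a$, and that the preservation hypothesis is used exactly to identify $\fs{colim}_i(\Gamma \times U_i)$ with $\Gamma \times U$.
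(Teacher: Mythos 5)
Your proposal is correct and follows essentially the same route as the paper: both interpret the extension type via the Leibniz (pullback-hom) map $E_A^V \to V_A^V \times_{V_A^U} E_A^U$, which is a fibration by the pushout-product axiom, with $r_\Pi$ obtained by transposing $r_A$ and $a$, and both handle the colimit case by assembling the $a_i$ into a single map $\Gamma \times U \cong \fs{colim}_i(\Gamma \times U_i) \to E_A$ and reducing to the first part. The paper leaves the abstraction/application clauses and the strictness of the equations as "easy to describe"; your transposition bookkeeping fills in exactly those details consistently with the paper's construction.
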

\begin{proof}
The premises of the introduction rule for $\langle \Pi_{y : V} A \mid_{x.a} \rangle$ give us the following commutative square:
\[ \xymatrix{ \Gamma \times U \ar[r]^a \ar[d]_{\id \times j}    & E_A \ar@{->>}[d]^{p_A} \\
              \Gamma \times V \ar[r]_{r_A}                      & V_A
            } \]
The extension type $\langle \Pi_{y : V} A \mid_{x.a} \rangle$ must be interpreted as the object of lifts in this square.
Thus, we define $p_\Pi : E_\Pi \twoheadrightarrow V_\Pi$ as the map $E_A^V \to V_A^V \times_{V_A^U} E_A^U$ which is a fibration by assumption.
Maps $a : \Gamma \times U \to E_A$ and $r_A : \Gamma \times V \to V_A$ determine a map $r_\Pi : \Gamma \to V_\Pi$.
This gives us the interpretation of the extension type.
It is easy to describe the interpretation of abstraction and application.

If $U = \fs{colim}_{i \in \scat{I}} U_i$, then we have a collection of maps $a_i : \Gamma \times U_i \to E_A$ for every $i \in \mathcal{I}$
which determine a map $\fs{colim}_{i \in \mathcal{I}}(\Gamma \times U_i) \to E_A$.
Since $\Gamma \times -$ preserves the colimit $\fs{colim}_{i \in \mathcal{I}} U_i$, this gives us a map $\Gamma \times U \to E_A$.
Now, we can use the previous construction.
\end{proof}

\begin{prop}[indexed-extension-types]
Let $F : \scat{B} \to \scat{C}$ be a functor between categories with fibrations which preserves binary products.
Let $j : U \to V$ be a map of $\scat{B}$.
If $\scat{C}$ satisfies the pushout-product axiom with respect to the map $F(j)$, then it has indexed extension types along $j$.
\end{prop}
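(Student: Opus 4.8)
The plan is to reduce the statement to \rprop{extension-types} by carrying out that construction over the total space of the indexed context, exactly as the proof of \rprop{products} does for dependent products. Throughout, write $\Gamma$ for a base context of $\scat{B}$ and let $p_\Delta : D \twoheadrightarrow F(\Gamma)$ be the fibration realizing the indexed context $\Gamma \mid \Delta$, so that an indexed type $\Gamma \mid \Delta \vdash C$ of $F^*(\scat{C}_!)$ is a fibration $p_C : E_C \twoheadrightarrow V_C$ together with a map $D \to V_C$, and an indexed type $\Gamma \mid \Delta \vdash \langle \prod_{y : V} A \mid_{x.a} \rangle$ is to be presented in the same form with base $D$.

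First I would unpack the premises. Since $y \notin \mathrm{FV}(\Delta)$, the indexed context over $\Gamma, y : V$ is obtained by weakening, i.e.\ by pulling $p_\Delta$ back along the projection $F(\Gamma \times V) \to F(\Gamma)$. Because $F$ preserves binary products this projection is $F(\Gamma) \times F(V) \to F(\Gamma)$, and the pullback of a product projection is a product, giving $(\Gamma \times V).\Delta \cong D \times F(V)$; likewise $(\Gamma \times U).\Delta \cong D \times F(U)$. Hence the premise $\Gamma, y : V \mid \Delta \vdash A$ amounts to a fibration $p_A : E_A \twoheadrightarrow V_A$ with a map $r_A : D \times F(V) \to V_A$, and the premise $\Gamma, x : U \mid \Delta \vdash a : A[j\,x/y]$ amounts to a map $a : D \times F(U) \to E_A$ with $p_A \circ a = r_A \circ (\id_D \times F(j))$. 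This is precisely the data in the proof of \rprop{extension-types}, with the context $\Gamma$ there replaced by $D$ and the map $j$ replaced by $F(j)$.

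Next I would apply that construction verbatim. Using that $F(U)$ and $F(V)$ are exponentiable (part of the pushout-product axiom with respect to $F(j)$), set $V_\Pi = V_A^{F(V)} \times_{V_A^{F(U)}} E_A^{F(U)}$ and $E_\Pi = E_A^{F(V)}$, and let $p_\Pi : E_\Pi \twoheadrightarrow V_\Pi$ be the canonical comparison map; the pushout-product axiom with respect to $F(j)$ is exactly the assertion that $p_\Pi$ is a fibration. The transposes $\widehat{r_A} : D \to V_A^{F(V)}$ and $\widehat{a} : D \to E_A^{F(U)}$ agree in $V_A^{F(U)}$ by the commutative square above, so they assemble into a map $r_\Pi : D \to V_\Pi$, and the pair $(r_\Pi, p_\Pi)$ is the desired interpretation of $\Gamma \mid \Delta \vdash \langle \prod_{y : V} A \mid_{x.a} \rangle$. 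The interpretation of abstraction and application, and the $\beta$/$\eta$ equations, are then read off from \rprop{extension-types}.

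Finally I would check stability under reindexing, which I expect to be the only delicate point but which should be automatic: the universe data $(V_\Pi, E_\Pi, p_\Pi)$ is built solely from $V_A$, $E_A$, $F(U)$, and $F(V)$, none of which depend on $\Gamma$, so a base substitution $\sigma : \Gamma' \to \Gamma$ leaves this data unchanged and only precomposes $r_\Pi$ with the induced map $D' \to D$ on total spaces, which is exactly what stability under reindexing demands. The main thing to get right is therefore the bookkeeping identification $(\Gamma \times V).\Delta \cong D \times F(V)$ from preservation of binary products together with weakening; once this dictionary is in place the argument is a direct transcription of \rprop{extension-types}.
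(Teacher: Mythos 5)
Your proposal is correct and takes essentially the same approach as the paper: the paper's proof of \rprop{indexed-extension-types} simply states that it is the same as that of \rprop{extension-types}, with the accompanying remark noting that one only needs $F$ to preserve binary products and the pushout-product axiom for $F(j) : F(U) \to F(V)$. Your write-up makes explicit exactly this reduction, namely the identification $(\Gamma \times V).\Delta \cong D \times F(V)$ via preservation of products and then the verbatim transcription of the universe construction with $\Gamma$ replaced by $D$ and $j$ by $F(j)$.
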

\begin{proof}
The proof is the same as the proof of the previous proposition.
\end{proof}

\begin{remark}
The interpretation of the indexed version of extension types is defined in the same way.
We only need to know that $F$ preserves products and that $\scat{C}$ satisfies the pushout-product axiom with respect to the map $F(j) : F(U) \to F(V)$.
\end{remark}

\section{Colimits}
\label{sec:colimits}

In this section, we give sufficient conditions for a model $F^*(\scat{C})$ to have initial types, binary coproducts, arbitrary coproducts, and pushouts.

\subsection{Quasifibrations}

For the discussion of colimits, it will be convenient to use the notion of a quasifibration which we now define.
We will say that a map $f : X \to Z$ in a category with fibrations is a \emph{quasifibration} if it factors as a trivial cofibration $i : X \to Y$ followed by a fibration $p : Y \twoheadrightarrow Z$ and, for every diagram of the form
\[ \xymatrix{ X' \ar[r] \ar[d]_{i'} \pb         & X \ar[d]^i \\
              Y' \ar[r] \ar@{->>}[d]_{p'} \pb   & Y \ar@{->>}[d]^p \\
              Z' \ar[r]                         & Z
            } \]
the map $i'$ is a trivial cofibration.
This notion is similar to the classical notion of a quasifibration between spaces.
Let $\scat{C}'$ be a full subcategory of $\scat{C}$ such that $Z$ belongs to $\scat{C}'$.
We will say that $f : X \to Z$ is a \emph{quasifibration relative to $\scat{C}'$} if the condition above holds whenever $Z'$ belongs to $\scat{C}'$.

\begin{example}
Every fibration is a quasifibration.
\end{example}

\begin{lem}[proj-quasi]
Let $\scat{C}$ be a combinatorial type-theoretic semi-fibration category and let $\scat{J}$ be a small category.
Then every projective fibration of $\scat{C}^\scat{J}$ is a quasifibration in the injective structure.
\end{lem}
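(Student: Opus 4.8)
The plan is to produce an explicit factorization witnessing the quasifibration property and then to verify the pullback condition objectwise, where the axioms of $\scat{C}$ become available. First I would factor the given projective fibration $f : X \to Z$. Since $\scat{C}^\scat{J}$ with its injective structure is combinatorial by \rcor{injective-combinatorial}, every map of $\scat{C}^\scat{J}$ factors as an injective trivial cofibration $i : X \to Y$ followed by an injective fibration $p : Y \twoheadrightarrow Z$ (this is the factorization used in the proof of that corollary, via \cite[Lemma~A.2.8.3]{lurie-topos}). Because an injective trivial cofibration has, by definition, the left lifting property with respect to injective fibrations, the map $i$ is a trivial cofibration and $p$ is a fibration in the injective structure, so $f = p \circ i$ is a legitimate candidate factorization for the quasifibration condition.

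It remains to check that, for every pullback diagram of the shape in the definition of a quasifibration along a map $Z' \to Z$, the induced map $i' : X' \to Y'$ is a trivial cofibration in the injective structure. I would verify this objectwise. Since limits in $\scat{C}^\scat{J}$ are computed pointwise, evaluating the two pullback squares at an object $j \in \scat{J}$ yields the corresponding pullback squares in $\scat{C}$, with $X'_j = X_j \times_{Z_j} Z'_j$ and $Y'_j = Y_j \times_{Z_j} Z'_j$, and $i'_j$ the induced comparison map. Here $i_j$ is a trivial cofibration in $\scat{C}$ (as $i$ is objectwise a trivial cofibration), $p_j$ is a fibration in $\scat{C}$ (as $p$ is an injective fibration, hence a projective fibration by \rlem{inj-proj}), and $f_j = p_j \circ i_j$ is a fibration in $\scat{C}$ (as $f$ is a projective fibration). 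Thus $(X_j, f_j)$ and $(Y_j, p_j)$ are both objects of $(\scat{C}/Z_j)_\fs{f}$ and $i_j$ is a trivial cofibration between them.

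At this point the final condition of \rdefn{ttsfc} applies directly: reindexing the trivial cofibration $i_j$ in $(\scat{C}/Z_j)_\fs{f}$ along the map $Z'_j \to Z_j$ shows that $X'_j \to Y'_j$, namely $i'_j$, is again a trivial cofibration in $\scat{C}$. As this holds for every $j$, the map $i'$ is an objectwise trivial cofibration, i.e.\ an injective trivial cofibration, and hence a trivial cofibration in the injective structure, which is exactly what the quasifibration condition demands.

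I expect the main obstacle to be conceptual rather than computational: one cannot apply the reindexing axiom directly in $\scat{C}^\scat{J}$, because $f$ is only a projective fibration and so $(X,f)$ need not be a fibrant object of the injective slice $\scat{C}^\scat{J}/Z$. The whole point of descending to the objectwise level is that there $f_j$ genuinely is a fibration, placing $i_j$ between fibrant objects of $\scat{C}/Z_j$ so that the semi-fibration axioms of $\scat{C}$ can be invoked; the hypotheses that $f$ is projective and that injective fibrations are projective (\rlem{inj-proj}) are precisely what make this reduction go through.
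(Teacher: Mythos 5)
Your proof is correct and follows essentially the same route as the paper: factor $f$ into an injective trivial cofibration followed by an injective fibration via \cite[Lemma~A.2.8.3]{lurie-topos}, use \rlem{inj-proj} to see that $p_j$ (and $f_j$) are fibrations so that $i_j$ is a trivial cofibration between fibrant objects of $\scat{C}/Z_j$, and then invoke the last axiom of \rdefn{ttsfc} objectwise. Your closing remark about why one must descend to the objectwise level (since $(X,f)$ need not be injectively fibrant over $Z$) is exactly the point the paper's proof exploits implicitly.
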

\begin{proof}
Let $f : X \to Z$ be a projective fibration.
The factorization of $f$ into an injective trivial cofibration $i : X \to Y$ followed by an injective fibration $p : Y \twoheadrightarrow Z$ exists by \cite[Lemma~A.2.8.3]{lurie-topos}.
By \rlem{inj-proj}, injective fibrations are projective.
Thus, for every $j \in \scat{J}$, the map $i_j : X_j \to Y_j$ is a trivial cofibration between fibrant objects in $\scat{C}/Z$.
It follows that its pullbacks along maps $Z' \to Z$ are trivial cofibrations by the definition of a type-theoretic semi-fibration category.
\end{proof}

The following lemma was proved in \cite[Theorem~3.3]{lum-shul-hits}.
We just extract the minimal set of conditions that was used in the proof.

\begin{lem}[quasifib]
In a right proper type-theoretic model category, stable under pullbacks coproducts of fibrations over a fixed base are quasifibrations.
\end{lem}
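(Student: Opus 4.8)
The plan is to obtain the required factorization from the model structure and then verify the pullback-stability of the trivial cofibration by identifying the relevant strict pullbacks with homotopy pullbacks. Write $f = \coprod_i p_i : \coprod_i X_i \to Z$ for the coproduct of the given fibrations $p_i : X_i \twoheadrightarrow Z$, and set $X = \coprod_i X_i$. Using the factorization of the model category, factor $f$ as a trivial cofibration $i : X \to Y$ followed by a fibration $p : Y \twoheadrightarrow Z$; in a type-theoretic model category the acyclic cofibrations coincide with the maps having the left lifting property against fibrations, so $i$ is a trivial cofibration in the sense used in the definition of a quasifibration. Given an arbitrary $r : Z' \to Z$, form $Y' = Z' \times_Z Y$ and $X' = Y' \times_Y X$; since pullbacks compose, $X' = Z' \times_Z X$, and I must show that the induced map $i' : X' \to Y'$ is a trivial cofibration.

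The cofibration half is immediate: $i'$ is a pullback of the trivial cofibration $i$, so it is a cofibration by the defining property of a type-theoretic model category that pullbacks of trivial cofibrations are cofibrations. It therefore remains to prove that $i'$ is a weak equivalence, and since trivial cofibrations are exactly the cofibrations that are weak equivalences, this will finish the proof. First I would use right properness to record that the strict pullback of a fibration along an arbitrary map is a homotopy pullback: hence the square exhibiting $Y' = Z' \times_Z Y$ is a homotopy pullback, and likewise each square exhibiting $Z' \times_Z X_i$ as the pullback of the fibration $p_i$ along $r$ is a homotopy pullback.

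The heart of the argument is to assemble these summandwise homotopy pullbacks into a single one for $X$. Here I would invoke the hypothesis that the coproduct is stable under pullback: stability gives the strict identification $X' = Z' \times_Z \coprod_i X_i = \coprod_i (Z' \times_Z X_i)$, and, combined with right properness, it shows that the square exhibiting $X'$ as the pullback of $f$ along $r$ is again a homotopy pullback, i.e.\ that homotopy pullback along $r$ commutes with the coproduct. Granting this, both $X'$ and $Y'$ are homotopy pullbacks of the span $Z' \xrightarrow{r} Z \leftarrow (-)$ applied to $X$ and to $Y$, and $i'$ is the comparison map induced by $i$; since $i$ is a weak equivalence over $Z$ and homotopy pullbacks are homotopy invariant in this input, $i'$ is a weak equivalence, as required. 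I expect the step that the strict pullback of the \emph{stable} coproduct is a homotopy pullback to be the main obstacle: this is precisely where universality of the coproduct must interact with right properness, and it is the content extracted from the proof of \cite[Theorem~3.3]{lum-shul-hits}.
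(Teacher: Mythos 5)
Your setup is fine as far as it goes: factoring $f$ as $X \xrightarrow{i} Y \xrightarrow{p} Z$, observing that $i'$ is a cofibration because pullbacks of trivial cofibrations are cofibrations, and reducing to showing $i'$ is a weak equivalence. The genuine gap is the step you yourself flag as the ``main obstacle'': the claim that the strict pullback square of $f$ along $r$ is a homotopy pullback. This is not an auxiliary fact you may grant --- it is equivalent to the lemma. The homotopy pullback of $X \to Z \leftarrow Z'$ is computed precisely by replacing $X \to Z$ with the fibration $p$ and taking the strict pullback $Y'$, so ``the square for $X'$ is a homotopy pullback'' says exactly that the comparison map $i' : X' \to Y'$ is a weak equivalence, which is the statement being proved. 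Everything you actually verify (the cofibration half, homotopy invariance of homotopy pullbacks, the identification $X' = \coprod_i (Z' \times_Z X_i)$) is the routine part, and the entire content is deferred to a claim you restate rather than prove; since the paper itself proves this lemma only by citing \cite[Theorem~3.3]{lum-shul-hits}, deferring that step back to the same citation leaves nothing established. Note also that assembling summandwise homotopy pullbacks cannot be done cheaply, because a coproduct of weak equivalences need not be a weak equivalence in a general model category.

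The missing argument --- the one extracted from \cite{lum-shul-hits} --- runs as follows. Factor $r : Z' \to Z$ as a trivial cofibration $j : Z' \to W$ followed by a fibration $q : W \to Z$, so that $i' = j^* q^* i$. First, $q^* i : q^* X \to q^* Y$ is a weak equivalence: it is the pullback of the weak equivalence $i$ along the map $W \times_Z Y \to Y$, which is a fibration (being a pullback of $q$), so right properness applies. Second, $j^* q^* Y \to q^* Y$ is the pullback of the weak equivalence $j$ along the fibration $q^* Y \to W$, hence a weak equivalence, again by right properness. Third, each $j^* q^* X_i \to q^* X_i$ is the pullback of the trivial cofibration $j$ along the fibration $q^* X_i \to W$, hence a cofibration (the type-theoretic hypothesis) and a weak equivalence (right properness), i.e.\ a trivial cofibration; stability of the coproduct under pullback identifies $j^* q^* X \to q^* X$ with $\coprod_i (j^* q^* X_i \to q^* X_i)$, and a coproduct of trivial cofibrations is a trivial cofibration because the class of maps with the left lifting property against fibrations is closed under coproducts. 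This is exactly the point where the trivial-cofibration refinement is indispensable: weak equivalences alone would not survive the coproduct. Now two-out-of-three applied to the commutative square relating $j^* q^* X$, $q^* X$, $j^* q^* Y$, $q^* Y$ shows that $i'$ is a weak equivalence, and combined with your cofibration half this makes $i'$ a trivial cofibration, as the definition of a quasifibration requires.
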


Finally, let us prove an easy but useful lemma:

\begin{lem}
Let $\scat{C}$ be a type-theoretic model category and let $\scat{C}'$ be a full subcategory of $\scat{C}$.
Suppose that, for every fibration $p : X \twoheadrightarrow Z$ with $Z \in \scat{C}'$, every trivial cofibration $U \to V$, and every map $V \to Z$, the induced map $U \times_Z X \to V \times_Z X$ is a trivial cofibration.
Let $f : X \to Z$ be a map such that $Z \in \scat{C}'$ and, for every trivial cofibration $U \to V$ with $U \in \scat{C}'$ and every map $V \to Z$, the induced map $U \times_Z X \to V \times_Z X$ is a trivial cofibration.
Then $f$ is a quasifibration relative to $\scat{C}'$.
\end{lem}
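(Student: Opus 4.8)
The plan is to check the definition of quasifibration relative to $\scat{C}'$ directly. First I would factor $f$ as a trivial cofibration $i : X \to Y$ followed by a fibration $p : Y \twoheadrightarrow Z$, using that $\scat{C}$ is a model category. Then I would fix a double pullback diagram as in the definition with $Z' \in \scat{C}'$, so that $X' = X \times_Z Z'$ and $Y' = Y \times_Z Z'$, and $i' : X' \to Y'$ is the pullback of $i$ along the projection $Y' \to Y$. Since pullbacks of trivial cofibrations are cofibrations in a type-theoretic model category, $i'$ is a cofibration, so it remains only to prove that $i'$ is a weak equivalence; once that is shown, $i'$ is automatically a trivial cofibration and we are done.

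To analyse $i'$ I would route the base change $Z' \to Z$ through a fibration. Factor $Z' \to Z$ as a trivial cofibration $j : Z' \to \hat Z$ followed by a fibration $\pi : \hat Z \twoheadrightarrow Z$, and put $\hat X = X \times_Z \hat Z$, $\hat Y = Y \times_Z \hat Z$, with $\hat i : \hat X \to \hat Y$ the induced map. The crucial point is that $\hat i$ is precisely the map supplied by the first hypothesis of the lemma (the one concerning fibrations whose base lies in $\scat{C}'$) when it is applied to the fibration $\pi : \hat Z \twoheadrightarrow Z$, to the trivial cofibration $i : X \to Y$, and to the map $p : Y \to Z$: indeed $X \times_Z \hat Z = \hat X$ and $Y \times_Z \hat Z = \hat Y$. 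Hence this hypothesis shows at once that $\hat i$ is a trivial cofibration, with no recourse to right properness.

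Finally I would compare $i'$ with $\hat i$. Applying the first hypothesis to the fibration $p$ itself, with the trivial cofibration $j$ and the map $\pi$, shows that the induced comparison $v : Y' \to \hat Y$ is a trivial cofibration; and applying the second hypothesis (the one concerning $f$) with the same $j$, whose source $Z'$ lies in $\scat{C}'$ exactly as that hypothesis demands, and the map $\pi$, shows that $u : X' \to \hat X$ is a trivial cofibration. The square with edges $u$, $v$, $i'$, $\hat i$ commutes, and $u$, $v$, $\hat i$ are all weak equivalences, so the 2-out-of-3 property forces $i'$ to be a weak equivalence, completing the argument. I expect the step involving $\hat i$ to be the main obstacle: the naive route would base-change the trivial cofibration $i$ along a fibration and appeal to right properness, but this is unavailable since $X$ need not be fibrant over $Z$. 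The device that rescues the argument is to recognise this base change not as a pullback of $i$ along a fibration but as a pullback of the auxiliary fibration $\pi$ along $i$, so that it falls directly under the first hypothesis, which is exactly the behaviour these hypotheses are engineered to guarantee.
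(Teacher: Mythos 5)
Your proof is correct and is essentially the paper's own argument: the paper likewise factors $Z' \to Z$ into a trivial cofibration followed by a fibration (its $Z''$ is your $\hat Z$), obtains the three trivial cofibrations $X'' \to Y''$, $Y' \to Y''$ (both from the first hypothesis) and $X' \to X''$ (from the hypothesis on $f$), and concludes that $X' \to Y'$ is a trivial cofibration by 2-out-of-3. If anything, you are slightly more explicit than the paper on one point it leaves implicit, namely that $i'$ is a cofibration because pullbacks of trivial cofibrations are cofibrations in a type-theoretic model category, so that the weak equivalence furnished by 2-out-of-3 is indeed a trivial cofibration.
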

\begin{proof}
Factor $f$ it into a trivial cofibration $X \to Y$ followed by a fibration $Y \twoheadrightarrow Z$.
Let $Z' \to Z$ be a map such that $Z' \in \scat{C}'$.
Factor $Z' \to Z$ into a trivial cofibration $Z' \to Z''$ followed by a fibration $Z'' \twoheadrightarrow Z$:
\[ \xymatrix{ X' \ar[r] \ar[d] \pb          & X'' \ar@{->>}[r] \ar[d] \pb       & X \ar[d] \\
              Y' \ar[r] \ar@{->>}[d] \pb    & Y'' \ar@{->>}[r] \ar@{->>}[d] \pb & Y \ar@{->>}[d] \\
              Z' \ar[r]                     & Z'' \ar@{->>}[r]                  & Z
            } \]
The maps $X'' \to Y''$ and $Y' \to Y''$ are trivial cofibrations by assumption.
The map $X' \to X''$ is also a trivial cofibration by assumption on $f$.
Thus, by the 2-out-of-3 property, the map $X' \to Y'$ is also a trivial cofibration.
\end{proof}

\subsection{Coproducts}

In this subsection, we describe the interpretation of the initial type, binary coproducts, and arbitrary coproducts.

\begin{prop}[initial]
Let $\scat{B}$ be a contextual category, let $\scat{C}$ be a category with fibrations, and let $F : \scat{B} \to \scat{C}$ be a functor between them.
If $\scat{C}$ has a strict quasifibrant initial object, then $F^*(\scat{C})$ has strict initial types.
\end{prop}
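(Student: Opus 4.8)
The plan is to interpret the initial type directly from the strict initial object $\emptyset$ of $\scat{C}$, letting its universal property supply the elimination rule for free, and to read off strictness and reindexing--stability from the strictness of $\emptyset$. First I would fix the local--universe datum for $\mathbf{0}$. Since $\emptyset$ is quasifibrant, the map $\emptyset \to 1$ is a quasifibration, so it factors as a trivial cofibration $\emptyset \to Y$ followed by a fibration $Y \twoheadrightarrow 1$. I would use this factorization to present $\mathbf{0}$ by a genuine fibration whose fibre is the strict initial object, so that in every base context $\Gamma$ the closed type $\Gamma \mid \cdot \vdash \mathbf{0}$ is classified by the unique map $F(\Gamma) \to 1$. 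Because $1$ receives a canonical map from each $F(\Gamma)$, this presentation is uniform in $\Gamma$, which already yields stability of the formation rule under reindexing along base maps.

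Next I would identify each context extension with a strict initial object. For any indexed context $\Delta$, the extended context $\Gamma \mid \Delta, z : \mathbf{0}$ is computed as a pullback of the $\mathbf{0}$--diagram along the canonical map from the interpretation of $\Gamma \mid \Delta$ down to $1$. Its fibre is the strict initial object, so by strictness every such pullback has its projection to $\emptyset$ inverted and is therefore again $\emptyset$; concretely, the object interpreting $\Gamma \mid \Delta, z : \mathbf{0}$ is isomorphic to $\emptyset$ for all $\Gamma$ and $\Delta$. The quasifibration hypothesis is what lets this be arranged with an honest fibration rather than merely up to homotopy: its pullback--stability clause guarantees that the inclusion of the strict--initial fibre into the total space remains a trivial cofibration after each pullback occurring in the construction, so the presentation of $\mathbf{0}$ stays a valid local--universe type under every reindexing we require.

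With this in hand the elimination rule is immediate. A term $\Gamma \mid \Delta, z : \mathbf{0} \vdash \fs{elim}(z.D) : D$ is exactly a section of the fibration presenting $D$ over the context extension; since that extension is the strict initial object there is a \emph{unique} such section, namely the unique map out of $\emptyset$. As $\mathbf{0}$ has no introduction forms, the computation rules are vacuous, and strictness---that the eliminated term is literally the unique section, not one chosen up to homotopy---is precisely the universal property of the strict initial object. Stability under substitution in the indexed context and under reindexing along base maps follows the same way: pullbacks of $\emptyset$ are again $\emptyset$, and the unique section is preserved by every pullback functor and by $F$, so both sides of each substitution equation are forced to coincide.

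The main obstacle is reconciling two competing demands. The local--universes construction requires every type to be presented by an honest fibration $E \twoheadrightarrow V$, whereas strictness of the elimination forces the fibre---the context extension---to be the strict initial object $\emptyset$; a naive presentation $\emptyset \twoheadrightarrow 1$ would need $\emptyset$ to be fibrant, which we do not assume. The quasifibrant hypothesis is exactly the weakening that resolves this tension: it supplies the factorization used to present $\mathbf{0}$ by a fibration, while the stability clause in the definition of a quasifibration, together with strictness of $\emptyset$, ensures that the fibre remains the strict initial object stably under all the reindexings. The only genuinely nontrivial bookkeeping is verifying this stability clause against each pullback appearing in the contextual--category structure; once that is done, every remaining verification collapses to the universal property of $\emptyset$.
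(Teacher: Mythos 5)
There is a genuine gap, and it sits at the central step of your argument. In the local universes construction the type $\mathbf{0}$ must be presented by an honest fibration, and since $0$ need not be fibrant you (correctly, like the paper) present it by a factorization $0 \to R(0) \twoheadrightarrow 1$. But then the context extension $\Gamma \mid \Delta, z : \mathbf{0}$ is, by definition of the model, the pullback of the presenting fibration $R(0) \twoheadrightarrow 1$ along $\Delta \to 1$, that is $\Delta \times R(0)$: its fibre is $R(0)$, not $0$. Your claim that this extension ``is isomorphic to $\emptyset$ for all $\Gamma$ and $\Delta$'' is therefore false. Strictness does imply that pullbacks of $0$ are initial, but nothing in the construction is a pullback of $0$ --- everything is a pullback of its replacement $R(0)$. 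Consequently the eliminator is not supplied ``for free'' by the universal property of the initial object: a term of $D$ over the extension is a section of $p_D : E_D \twoheadrightarrow V_D$ over a map out of $\Delta \times R(0)$, for which there is neither automatic existence nor uniqueness. (The same problem appears in the equivalent formulation with a premise $\Gamma \mid \Delta \vdash a : \mathbf{0}$, which the paper reduces to: such an $a$ is just a map $\Delta \to R(0)$, and this exists for many non-initial $\Delta$, e.g.\ $\Delta = R(0)$.)

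What actually closes the gap is a lifting argument, and this is where quasifibrancy does its real work; in your proposal it is invoked only for the factorization and to support the false identification, never for a lift against $p_D$. Given $a : \Delta \to R(0)$ and $D$ presented by $r_D : \Delta \to V_D$ and $p_D : E_D \twoheadrightarrow V_D$, quasifibrancy of $0$ says that the pullback $0 \times V_D \to R(0) \times V_D$ of the trivial cofibration $0 \to R(0)$ along $V_D \to 1$ is again a trivial cofibration; strictness gives $0 \times V_D \cong 0$, so the unique map $0 \to R(0) \times V_D$ is itself a trivial cofibration. Lifting the unique map $0 \to E_D$ against the fibration $p_D$ over $\pi_1 : V_D \times R(0) \to V_D$ yields a map $s : V_D \times R(0) \to E_D$, and the eliminator is interpreted as $s \circ \langle r_D, a \rangle : \Delta \to E_D$. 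It is a chosen lift, not a unique map; stability under substitution holds because $s$ depends only on the presentation of $D$, while reindexing changes only $r_D$ and $a$. This lifting step is the actual content of the paper's proof and is absent from your proposal.
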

\begin{proof}
Factor the map $0 \to 1$ into a trivial cofibration $0 \to R(0)$ followed by a fibration $R(0) \twoheadrightarrow 1$.
We define the initial type in any context as follows:
\[ \xymatrix{               & R(0) \ar@{->>}[d] \\
              \Delta \ar[r] & 1
            } \]
By \cite[Proposition~7.4]{indexed-tt}, we just need to described the interpretation of the elimination rule:
\begin{center}
\AxiomC{$\Gamma \mid \Delta \vdash D$}
\AxiomC{$\Gamma \mid \Delta \vdash a : 0$}
\BinaryInfC{$\Gamma \mid \Delta \vdash 0\text{-}\fs{elim'}(D,a) : D$}
\DisplayProof
\end{center}

Let $a : \Delta \to R(0)$ be a map in $\scat{C}$ and let $r_D : \Delta \to V_D$, $p_D : E_D \twoheadrightarrow V_D$ be the interpretation of $D$.
Then we need to construct a section of $p_D$ over $r_D$.
The map $0 \times V_D \to R(0) \times V_D$ is a trivial cofibration since it is a pullback of $0 \to R(0)$ and $0$ is quasifibrant.
Since $0$ is strict, $0 \times V_D$ is initial.
Hence, the unique map $0 \to R(0) \times V_D$ is a trivial cofibration.
It follows that we have a lift in the following diagram:
\[ \xymatrix{ 0 \ar[r] \ar[d]                                   & E_D \ar@{->>}[d]^{p_D} \\
              V_D \times R(0) \ar[r]_-{\pi_1} \ar@{-->}[ur]^s   & V_D
            } \]
Then the interpretation of $0\text{-}\fs{elim'}$ is defined as $\Delta \xrightarrow{\langle r_D, a \rangle} V_D \times R(0) \xrightarrow{s} E_D$.
\end{proof}

\begin{remark}
Let $\scat{C}$ be a category with fibrations with a strict initial object $0$.
If $0$ is fibrant, then trivial cofibrations with the initial domain also have the initial codomain.
The converse holds if the class of fibrations is saturated.
\end{remark}

\begin{example}
Let $\scat{C}$ be a category with fibrations and let $\scat{J}$ be a small category.
If $\scat{C}$ has a strict fibrant initial object $0$, then $\scat{C}^\scat{J}$ has a strict injectively fibrant initial object.
Indeed, the constant functor on $0$ is a strict initial object.
The previous remark implies that it is injectively fibrant.
\end{example}

\begin{example}
If $\scat{C}$ is a right proper type-theoretic model category with a strict initial object, then $\scat{C}^\scat{J}$ has a strict injectively quasifibrant initial object.
This follows from \rlem{quasifib} and the fact that $\scat{C}^\scat{J}$ is right proper.
\end{example}

Since the definition of binary coproducts in indexed type theories does not involve the base context, we can use the same construction as in ordinary type theory.
Such a construction was described in \cite{lum-shul-hits}.
The only difference between contextual categories and indexed contextual categories is that not all context are fibrant in the indexed case;
it is only true that context are fibrant over $F(\Gamma)$, where $\Gamma$ is a base context, and $F(\Gamma)$ is usually not fibrant.
This condition was not used in \cite{lum-shul-hits}, so we can apply theorems from this paper.

\begin{prop}[binary-coproducts]
Let $\scat{B}$ be a contextual category, let $\scat{C}$ be a category with fibrations, and let $F : \scat{B} \to \scat{C}$ be a functor between them.
Suppose that, for every pair of fibrations $A \twoheadrightarrow \Gamma$ and $B \twoheadrightarrow \Gamma$ of $\scat{C}$,
there exists the coproduct $A \amalg B$ and the induced map $A \amalg B \to \Gamma$ is a quasifibration.
Then $F^*(\scat{C}_!)$ has stable dependent coproducts.
\end{prop}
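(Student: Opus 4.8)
The plan is to follow the construction of binary coproducts in the homotopy-theoretic setting of \cite{lum-shul-hits}, using the quasifibration hypothesis to make the elimination rule interpretable, and the local universes machinery to make the whole interpretation stable under substitution and reindexing. Since binary coproducts do not refer to the base context, the argument is essentially the one for ordinary type theory, adapted only to the fact that contexts of $F^*(\scat{C}_!)$ are fibrant over $F(\Gamma)$ rather than genuinely fibrant. Throughout I use that coproducts in $\scat{C}$ are stable under the relevant pullbacks, which holds in the cases of interest (cf. \rlem{quasifib}).

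First I would build the universal coproduct. Types $A$ and $B$ are classified by local universes $p_A : E_A \twoheadrightarrow V_A$ and $p_B : E_B \twoheadrightarrow V_B$. Over $V_A \times V_B$ I pull these back to fibrations $E_A \times V_B$ and $V_A \times E_B$ and form their coproduct; by hypothesis the induced map $(E_A \times V_B) \amalg (V_A \times E_B) \to V_A \times V_B$ is a quasifibration, so it factors as a trivial cofibration $\iota$ followed by a fibration $q : E_{A+B} \twoheadrightarrow V_A \times V_B$. Setting $V_{A+B} = V_A \times V_B$ and $p_{A+B} = q$ gives the universe for $A+B$, and the injections are the composites $(E_A \times V_B), (V_A \times E_B) \to (E_A \times V_B)\amalg(V_A \times E_B) \xrightarrow{\iota} E_{A+B}$. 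A pair of types $\Gamma \mid \Delta \vdash A$ and $\Gamma \mid \Delta \vdash B$ determines a classifying map into $V_A \times V_B$, and the interpretation of $A+B$ and of $\mathrm{inl},\mathrm{inr}$ is obtained purely by pullback. Because this is a single universal factorization followed only by pullbacks, formation and introduction are automatically stable.

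The core of the argument is the elimination rule, and this is exactly where the quasifibration condition enters. Given a motive $D$, a fibration over the context extended by $A+B$, and methods $d_1,d_2$ living over the two injections, the context extensions by $A$ and by $B$ sit inside the context extension by $A+B$ as the pullback of $\iota$ along the classifying map into $V_A \times V_B$. The quasifibration property says precisely that this pulled-back map $i'$ is again a trivial cofibration. Using stability of coproducts I identify the domain of $i'$ with the coproduct of the two context extensions, so that $d_1$ and $d_2$ assemble into a single map into the total space of $D$; a lift of this map against the fibration $D$ along the trivial cofibration $i'$ is the eliminator, and the $\beta$-rule holds strictly since the lift precomposed with each injection is forced to return $d_1$, respectively $d_2$.

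The hard part will be stability of the eliminator under substitution, since a lift against a fibration is not canonical. I would resolve this as in \cite{lum-shul-hits} and \cite{local-universes}: perform the lift once in a universal situation, over a local universe that classifies the motive together with both methods, and obtain every concrete eliminator by pulling this single universal lift back along the appropriate classifying map. Naturality of the classifying maps then yields stability under substitution of both base and indexed terms, giving the full force of \emph{stable} binary coproducts. The remaining points — that the pulled-back comparison maps are trivial cofibrations (immediate from the quasifibration condition) and that the computation rule is preserved by pullback — are routine.
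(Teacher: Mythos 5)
Your proposal is correct and takes essentially the same route as the paper, whose proof simply defers to the coproduct construction of \cite[Theorem~3.3]{lum-shul-hits}: a single universal factorization of $(E_A \times V_B) \amalg (V_A \times E_B) \to V_A \times V_B$ supplied by the quasifibration hypothesis, pullbacks for formation and introduction, a lift against the pulled-back trivial cofibration for elimination, and a universal lift over a local universe of motive-plus-methods data for stability (you also correctly note, as the paper does, that non-fibrancy of contexts is harmless). The one point worth recording is that your standing assumption that these coproducts are stable under the relevant pullbacks is not literally among the hypotheses of the proposition, but it is genuinely needed in the elimination step (to identify the domain of $i'$ with the coproduct of the two context extensions, so that $[d_1,d_2]$ is defined there), and it is implicit in the paper as well --- compare the phrase ``stable under pullbacks coproducts'' in \rlem{quasifib} and the locally Cartesian closed setting of the cited theorem.
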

\begin{proof}
The construction is essentially the same as the construction of coproducts in \cite[Theorem~3.3]{lum-shul-hits}.
\end{proof}

\begin{example}
Let $\scat{C}$ be a category with fibrations.
Suppose that there is a class $S$ of maps of $\scat{C}$ such that the following conditions hold:
\begin{enumerate}
\item A map is a fibration if and only if it has the right lifting property with respect to $S$.
\item Binary coproducts exist in $\scat{C}$ and the domains of maps in $S$ are connected in the sense that maps from them to a coproduct $A \amalg B$ factor through either $A$ or $B$.
\end{enumerate}
Then fibrations over $\Gamma$ are closed under coproducts.
Thus, \rprop{binary-coproducts} implies that $F^*(\scat{C}_!)$ has stable dependent coproducts for every functor $F : \scat{B} \to \scat{C}$.
\end{example}

Finally, let us construct indexed coproducts:

\begin{prop}[coproducts]
Let $F : \scat{B} \to \scat{C}$ be a functor between categories with fibrations.
Suppose that the following conditions hold:
\begin{enumerate}
\item $F$ preserves pullbacks along fibrations and pullbacks of $F$-fibrations exist.
\item Fibrations and $F$-fibrations are exponentiable.
\item The composition of a fibration and an $F$-fibration is a quasifibration.
\end{enumerate}
Then $F^*(\scat{C}_!)$ has stable dependent coproducts.
\end{prop}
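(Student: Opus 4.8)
The plan is to build the coproduct fibrewise over the $F$-fibration hidden in the formation data, reusing the set-up of \rprop{products} for the inputs and invoking the quasifibration of hypothesis (3) in place of the right adjoint used there. First I would record the classifying diagrams exactly as in that proof: a base type $\Gamma \vdash I$ is given by $r_I : \Gamma \to V_I$ and a fibration $p_I : E_I \twoheadrightarrow V_I$, and a type $\Gamma, i : I \mid \Delta \vdash B$ by $r_B : \Gamma.I.\Delta \to V_B$ and a fibration $p_B : E_B \twoheadrightarrow V_B$, where $\Gamma.I.\Delta$ is the displayed pullback. Because $F$ preserves pullbacks along fibrations (hypothesis (1)), the map $F(\Gamma.I) \to F(\Gamma)$ is a pullback of $F(p_I)$, so the projection $\pi : \Gamma.I.\Delta \to \Delta$ is an $F$-fibration, while the context projection $\Gamma.I.\Delta.B \to \Gamma.I.\Delta$ (the pullback of $p_B$ along $r_B$) is a fibration. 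The coproduct $\coprod_{i : I} B$ should have as its fibrewise total space the image of $B$ under the left adjoint to $\pi^*$, i.e. the composite $c : \Gamma.I.\Delta.B \to \Gamma.I.\Delta \xrightarrow{\pi} \Delta$; by hypothesis (3) this composite of a fibration with the $F$-fibration $\pi$ is a quasifibration.

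To make this into an honest type I would work at the level of local universes. The formation rule consumes exactly the data $(r_I, r_B)$, which is the same data consumed by the $\Pi$-formation rule, so I take the same formation universe $V_\amalg = \Pi_{F(p_I)}(F(E_I) \times V_B)$; this is where hypothesis (2) enters, guaranteeing that the $F$-fibration $F(p_I)$ is exponentiable so the right adjoint exists. Over $V_\amalg$ the tautological map classifies a universal $r_B$ and hence a universal instance of the quasifibration $c$; factoring this universal quasifibration as a trivial cofibration followed by a fibration gives the universal total space $E_\amalg \twoheadrightarrow V_\amalg$, which I declare to be the interpretation of $\coprod_{i : I} B$ together with the classifying map $[r_I', r_B] : \Delta \to V_\amalg$ of \rprop{products}. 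The introduction rule is interpreted through the universal trivial cofibration: an instance of $(j, b)$ determines a map into $\Gamma.I.\Delta.B$, and composing with the pullback of the trivial cofibration lands it in the extended context of the coproduct. For the elimination rule, a type $D$ over the extended context and a term $d$ over $\Gamma.I.\Delta.B$ assemble into a partial section defined on the image of the trivial cofibration; since that map is a trivial cofibration and $D$ is classified by a fibration, a lift exists, and I take $\coprod\text{-}\fs{elim}$ to be this lift.

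The computation rule holds because any such lift restricts along the trivial cofibration to the given $d$, and the type-level $\beta$/$\eta$ equations follow, as in \rprop{products}, from the fact that the passages between sections over $E_\amalg$ and over $\Gamma.I.\Delta.B$ are mutually inverse. The main work, and the reason the resulting coproducts are stable rather than merely unstable, is stability under reindexing and indexed substitution. Because all data are obtained by pulling the fixed universal objects $V_\amalg$, $E_\amalg$ and the universal lift back along classifying maps, and both forms of substitution act simply by precomposing those classifying maps, stability reduces to the single fact that pulling the universal trivial cofibration back along the squares over $\Delta$ again yields trivial cofibrations. This is exactly the defining property of a quasifibration supplied by hypothesis (3), so the construction goes through; the genuine obstacle is therefore organizing the universal lift and verifying this pullback-stability, which is the content of \cite[Theorem~3.3]{lum-shul-hits} carried out fibrewise over $\pi$ rather than over a single base object, just as in \rprop{binary-coproducts}.
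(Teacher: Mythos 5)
Your formation and introduction rules match the paper's construction exactly: the same universe $V_\amalg = \Pi_{F(p_I)}(F(E_I) \times V_B)$, the same universal quasifibration $p \circ q : Z \to V_\amalg$ (the pullback of $p_B$ composed with the pullback of $F(p_I)$), and the same factorization $Z \xrightarrow{t} E_\amalg \twoheadrightarrow V_\amalg$. The gap is in the elimination rule. You take the lift at the instance level: your partial section is assembled from the instance data $(r_D, d)$, and you lift against $p_D$ over the pullback of $t$ along $\Delta \to V_\amalg$ (a trivial cofibration $\Gamma.I.\Delta.B \to \Delta.\amalg$ by the quasifibration hypothesis). Such a lift exists, but it is an arbitrary choice made separately for each instance, and lifts against fibrations are not canonical; there is no reason why the lift chosen for a substituted instance should equal the substituted lift. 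Your stability paragraph then appeals to ``the universal lift,'' but no universal lift has been constructed --- the only lift appearing in your proof depends on $D$ and $d$, which are precisely the data that vary under substitution. As written, your construction yields only unstable dependent coproducts, not the stable ones claimed.

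What is missing is a second local-universes construction, this time for the elimination data, and this is the heart of the paper's proof. The paper forms the object $\Pi_{p_\amalg}(E_\amalg \times V_D)$ classifying the maps $r_D$, then the object $\Pi_s(X)$ classifying pairs $(r_D, d)$, where $s$ is the pullback of $p \circ q$ over $\Pi_{p_\amalg}(E_\amalg \times V_D)$ and $X$ is an appropriate pullback of $p_D$; the existence of $\Pi_s(X)$ requires $s$ to be exponentiable, which is where hypothesis (2) re-enters via \cite[Corollary~2.6]{comp-fact-tor}. The quasifibration hypothesis is then used to show that the pullback $t'$ of $t$ over $\Pi_s(X)$ is a trivial cofibration, so that a single lift $e : \Pi_s(X) \times_{V_\amalg} E_\amalg \to E_D$ can be chosen once at this universal level; every instance of the eliminator is $e$ precomposed with the classifying map $\Delta \to \Pi_s(X) \times_{V_\amalg} E_\amalg$ determined by $(c, r_D, d)$, and stability is automatic because substitution acts by precomposition with classifying maps. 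In other words, the quasifibration property is not what makes stability hold, as your final paragraph suggests; it is what makes the universal lift exist, and the universal construction is what gives stability. Without constructing $\Pi_s(X)$ and the universal lift $e$, the reduction you describe does not go through.
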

\begin{proof}
Suppose that we have diagrams depicted below which correspond to types $\Gamma \vdash I$ and $\Gamma, i : I \mid \Delta \vdash B$.
\[ \xymatrix{                       & E_I \ar@{->>}[d]^{p_I} \\
              \Gamma \ar[r]_{r_I}   & V_I
            } \qquad
   \xymatrix{                               & E_B \ar@{->>}[d]^{p_B} \\
              \Gamma.I.\Delta \ar[r]_{r_B}  & V_B
            } \]
where $\Gamma.I.\Delta$ is the following pullback:
\[ \xymatrix{ \Gamma.I.\Delta \ar[r] \ar@{->>}[d] \pb   & \Delta \ar@{->>}[d]^{p_\Delta} \\
              F(\Gamma.I) \ar[r] \ar[d] \pb             & F(\Gamma) \ar[d]^{F(r_I)} \\
              F(E_I) \ar[r]_{F(p_I)}                    & F(V_I)
            } \]
We define $V_\amalg$ as $\Pi_{F(p_I)}(F(E_I) \times V_B)$.
Consider the following diagrams:
\[ \xymatrix{ V_\amalg \times_{F(V_I)} F(E_I) \ar[r] \ar[d]_p \pb   & F(E_I) \ar[d]^{F(p_I)} \\
              V_\amalg \ar[r]                                       & F(V_I)
            } \qquad
   \xymatrix{ Z \ar[r] \ar@{->>}[d]_q \pb                       & F(E_I) \times E_B \ar@{->>}[d]^{\id_{F(E_I)} \times p_B} \\
              V_\amalg \times_{F(V_I)} F(E_I) \ar[r]_-{\fs{ev}} & F(E_I) \times V_B
            } \]
Since $p \circ q$ is a quasifibration, we can factor it into a trivial cofibration $t : Z \to E_\amalg$ followed by a fibration $p_\amalg : E_\amalg \twoheadrightarrow V_\amalg$.
To define a map $\Delta \to V_\amalg$, it is enough to specify a map $r_I' : \Delta \to F(V_I)$ together with a map $r_B' : \Delta.I \to V_B$, where $\Delta.I$ is the pullback of $r_I'$ and $F(p_I)$.
Let $r_I' = F(r_I) \circ p_\Delta$ and $r_B' = r_B$.
These maps determine a map $r_\amalg = [r_I',r_B] : \Delta \to V_\amalg$.
We define the interpretation of $\coprod_{i : I} B$ as the pair $r_\amalg$, $p_\amalg$.

Now, let us describe the interpretation of the introduction rule:
\begin{center}
\AxiomC{$\Gamma \vdash j : I$}
\AxiomC{$\Gamma \mid \Delta \vdash b : B_j$}
\BinaryInfC{$\Gamma \mid \Delta \vdash (j,b) : \coprod_{i : I} B_i$}
\DisplayProof
\end{center}
The interpretation of $j$ is a section $j : \Gamma \to E_I$ of $p_I$ over $r_I$.
This map determines a section $j' : \Gamma \to \Gamma.I$ of the map $\Gamma.I \to \Gamma$.
The pullback of $F(j')$ along $p_\Delta$ will be denoted by $j'' : \Delta \to \Gamma.I.\Delta$.
The interpretation of $b$ is a section $b : \Gamma.I.\Delta \to E_B$ of $p_B$.
This map determines a section $b' : \Gamma.I.\Delta \to Z$ of $q$:
\[ \xymatrix{                                                   & Z \ar[r] \ar@{->>}[d]^q \pb                                       & F(E_I) \times E_B \ar@{->>}[d]^{\id_{F(E_I) \times p_B}}  &                           \\
              \Gamma.I.\Delta \ar[r] \ar[d] \ar[ur]^{b'} \pb    & V_\amalg \times_{F(V_I)} F(E_I) \ar[r]_-{\fs{ev}} \ar[d]^p \pb    & F(E_I) \times V_B \ar[r]_-{\pi_1}                         & F(E_I) \ar[d]^{F(p_I)}    \\
              \Delta \ar[r]_{[r_I',r_B]}                        & V_\amalg \ar[rr]                                                  &                                                           & F(V_I)
            } \]
We define the interpretation of $(j,b)$ as $t \circ b' \circ j''$.

Finally, let us describe the interpretation of the elimination rule:
\begin{center}
\def\extraVskip{1pt}
\Axiom$\fCenter \Gamma \mid \Delta, z : \coprod_{i : I} B_i \vdash D$
\noLine
\UnaryInf$\fCenter \Gamma, i : I \mid \Delta, x : B_i \vdash d : D[(i,x)/z]$
\Axiom$\fCenter \Gamma \mid \Delta \vdash c : \coprod_{i : I} B_i$
\def\extraVskip{2pt}
\BinaryInfC{$\Gamma \mid \Delta \vdash \coprod\text{-}\fs{elim}(z.D, i x.d, c) : D$}
\DisplayProof
\end{center}
The interpretation of $D$ consists of a map $r_D : \Delta.\amalg \to V_D$ together with a fibration $p_D : E_D \twoheadrightarrow V_D$, where $\Delta.\amalg$ is the pullback of $r_\amalg$ and $p_\amalg$.
The interpretation of $d$ is a section $d : \Gamma.I.\Delta.B \to E_D$ of $p_D$, where $\Gamma.I.\Delta.B$ is the pullback of $r_B$ and $p_B$.
The interpretation of $c$ is a section $c : \Delta \to E_\amalg$ of $p_\amalg$ over $r_\amalg$.

Let $s$ be the pullback of $p \circ q$:
\[ \xymatrix{ \Pi_{p_\amalg}(E_\amalg \times V_D) \times_{V_\amalg} Z \ar[r] \ar[d]_s \pb   & Z \ar[d]^{p \circ q} \\
              \Pi_{p_\amalg}(E_\amalg \times V_D) \ar[r]                                    & V_\amalg
            } \]
Let $X$ be the following pullback:
\[ \xymatrix{ X \ar[rr] \ar@{->>}[d] \pb                                                            &                                                                                   & E_\amalg \times E_D \ar@{->>}[d]^{\id \times p_D} \\
              \Pi_{p_\amalg}(E_\amalg \times V_D) \times_{V_\amalg} Z \ar[r]_-{\id \times_\id t}    & \Pi_{p_\amalg}(E_\amalg \times V_D) \times_{V_\amalg} E_\amalg \ar[r]_-{\fs{ev}}  & E_\amalg \times V_D
            } \]
Maps $p$ and $q$ are exponentiable by assumption.
By \cite[Corollary~2.6]{comp-fact-tor}, $s$ is also exponentiable.
Let $t'$ be the pullback of $t$:
\[ \xymatrix{ \Pi_s(X) \times_{V_\amalg} Z \ar[r] \ar[d]_{t'} \pb           & Z \ar[d]^t                        \\
              \Pi_s(X) \times_{V_\amalg} E_\amalg \ar[r] \ar@{->>}[d] \pb   & E_\amalg \ar@{->>}[d]^{p_\amalg}  \\
              \Pi_s(X) \ar[r]                                               & V_\amalg
            } \]
Since $p \circ q$ is a quasifibration, $t'$ is a trivial cofibration.
It follows that we have a lift in the following diagram:
\[ \xymatrix{ \Pi_s(X) \times_{V_\amalg} Z \ar[r]^-{\fs{ev}} \ar[d]_{t'}    & X \ar[rr]                                                                         &                                       & E_D \ar@{->>}[d]^{p_D} \\
              \Pi_s(X) \times_{V_\amalg} E_\amalg \ar[r] \ar@{-->}[urrr]^e  & \Pi_{p_\amalg}(E_\amalg \times V_D) \times_{V_\amalg} E_\amalg \ar[r]_-{\fs{ev}}  & E_\amalg \times V_D \ar[r]_-{\pi_2}   & V_D
            } \]
We define the interpretation of the eliminator as the composite $\Delta \to \Pi_s(X) \times_{V_\amalg} E_\amalg \xrightarrow{e} E_D$, where the first map is defined below.
To define such a map, it is enough to specify maps $\Delta \to \Pi_s(X)$ and $\Delta \to E_\amalg$ which are equal over $V_\amalg$.
We define the latter map as $c : \Delta \to E_\amalg$.
To define the former map, it is enough to specify two maps:
\begin{itemize}
\item A map $\Delta \to \Pi_{p_\amalg}(E_\amalg \times V_D)$ such that its composition with the map $\Pi_{p_\amalg}(E_\amalg \times V_D) \to V_\amalg$ equals to $r_\amalg$.
To define such a map, it is enough to specify a map $\Delta.\amalg \to V_D$.
We define it as $r_D$.
\item A section $\Gamma.I.\Delta.B \to E_D$ of $p_D$.
We define this section as $d$.
\end{itemize}
This completes the definition of the interpretation of the eliminator.
\end{proof}

\begin{example}
If $\scat{C}$ is a combinatorial type-theoretic semi-fibration category and $F : \scat{J} \to \scat{I}$ is a functor between small categories
then $F^*((\scat{C}^\scat{J})_!)$ satisfies the conditions of \rprop{coproducts}, where $\scat{C}^\scat{J}$ is equipped with the injective structure.
Indeed, both injective fibrations and $F$-fibrations are objectwise fibrations which implies that they are exponentiable.
Moreover, \rlem{proj-quasi} implies that their composition is a quasifibration.
\end{example}

The following proposition gives sufficient conditions for unstable coproducts to exist:

\begin{prop}[unstable-coproducts]
Let $F : \scat{B} \to \scat{C}$ be a functor between categories with fibrations.
Suppose that the following conditions hold:
\begin{enumerate}
\item $F$ preserves pullbacks along fibrations and pullbacks of $F$-fibrations exist.
\item Fibrations and $F$-fibrations are exponentiable.
\item The composition of a fibration and an $F$-fibration is a quasifibration relative to a full subcategory $\scat{C}'$ of $\scat{C}$.
\item For every object $X \in \scat{C}$, there is a map $\varepsilon_X : X' \to X$ such that $X' \in \scat{C}'$ and,
for every fibrant object $\Gamma \in \scat{B}$, every map of the form $F(\Gamma) \to X$ factors through $\varepsilon_X$ and the factorization is natural in $\Gamma$.
\end{enumerate}
Then $F^*(\scat{C}_!)$ has coproducts.
\end{prop}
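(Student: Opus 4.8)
The plan is to run the construction from the proof of \rprop{coproducts}, modifying it exactly as the second part of \rlem{cartesian-closed} modifies the first. Since the factorization hypothesis on $\varepsilon$ only applies to maps out of $F(\Gamma)$ with $\Gamma$ a context, I would first build the coproduct in the empty indexed context and then appeal to the reduction of \cite{indexed-tt} that produces unstable colimits from colimits in the empty context (recalled for pushouts in the introduction). So assume $\Delta = F(\Gamma)$. As in \rprop{coproducts}, form $V_\amalg = \Pi_{F(p_I)}(F(E_I) \times V_B)$, the $F$-fibration $p$, the fibration $q$, and the object $Z$, so that $p \circ q : Z \to V_\amalg$ is the composition of a fibration and an $F$-fibration; the first two hypotheses guarantee that $\Pi_{F(p_I)}$ and the later functors $\Pi_s$ are defined. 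In the empty context the classifying map is a map $r_\amalg : F(\Gamma) \to V_\amalg$, so by the fourth hypothesis it factors through some $\varepsilon_{V_\amalg} : V_\amalg' \to V_\amalg$ with $V_\amalg' \in \scat{C}'$; here I use that contexts are always fibrant.

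Next I would pull $p$, $q$, and $Z$ back along $\varepsilon_{V_\amalg}$ to objects $p'$, $q'$, $Z'$ over $V_\amalg'$. Because $V_\amalg'$ lies in $\scat{C}'$, the third hypothesis applies and $p' \circ q'$ is a quasifibration relative to $\scat{C}'$, so it factors into a trivial cofibration $t' : Z' \to E_\amalg'$ followed by a fibration $p_\amalg' : E_\amalg' \twoheadrightarrow V_\amalg'$. The interpretation of $\coprod_{i : I} B_i$ is then the factored map together with $p_\amalg'$, and the introduction rule is interpreted as in \rprop{coproducts}, by producing a section of $q'$ and composing with $t'$. For the elimination rule I would copy the construction of \rprop{coproducts} — forming $\Pi_{p_\amalg'}(E_\amalg' \times V_D)$, the pullback $s$ of $p' \circ q'$, the object $X$, and the pullback of $t'$ — with one change: where \rprop{coproducts} invokes the quasifibration property to see that the pullback of $t'$ over the base $\Pi_s(X)$ is a trivial cofibration, I would first factor the relevant map $F(\Gamma) \to \Pi_s(X)$ through $\varepsilon_{\Pi_s(X)}$, so that the base lies in $\scat{C}'$ and the \emph{relative} quasifibration property supplies the trivial cofibration and hence the desired lift.

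Stability under reindexing follows, as in \rlem{cartesian-closed}, from the naturality of the factorizations in $\Gamma$; since the construction depends on the chosen factorizations, it is not stable under substitution of indexed terms, which is precisely why the resulting coproducts are unstable. The $\beta$- and $\eta$-equations reduce, exactly as in \rprop{coproducts}, to the fact that passing between sections of $p_\amalg'$ and of $p_B$ is mutually inverse, and the factorizations do not disturb this.

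The main obstacle I anticipate is controlling membership in $\scat{C}'$: the relative quasifibration property may only be used over a base lying in $\scat{C}'$, and in the elimination rule several bases occur — most importantly $\Pi_s(X)$ — each of which must be replaced by a $\scat{C}'$-object through $\varepsilon$. One then has to check that all of these factorizations are natural in $\Gamma$ and mutually compatible, so that the interpretations of introduction and elimination still satisfy the computation rule and remain stable under reindexing. Carrying out this bookkeeping, and justifying the passage from the empty indexed context to a general $\Delta$ by the reduction of \cite{indexed-tt}, is where the genuine work lies.
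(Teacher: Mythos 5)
Your proposal is correct and follows essentially the same route as the paper's proof: work in the empty indexed context, factor the classifying map $F(\Gamma) \to V_\amalg$ through $\varepsilon_{V_\amalg}$, pull back $p$ and $q$ to $V_\amalg'$ so that the relative quasifibration hypothesis yields the factorization defining $p_\amalg'$, and in the eliminator replace $P = \Pi_s(X)$ by $P'$ via $\varepsilon_P$ so that the pullback of the trivial cofibration is again a trivial cofibration. The bookkeeping you flag (naturality of the factorizations and the reduction from the empty indexed context to general $\Delta$) is exactly what the paper also leaves implicit.
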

\begin{proof}
We have as before the interpretation of types $\Gamma \vdash I$ and $\Gamma, i : I \mid \cdot \vdash B$:
\[ \xymatrix{                       & E_I \ar@{->>}[d]^{p_I} \\
              \Gamma \ar[r]_{r_I}   & V_I
            } \qquad
   \xymatrix{ F(\Gamma.I) \ar[r] \ar[d] \pb & F(E_I) \ar[d]^{F(p_I)} \\
              F(\Gamma) \ar[r]_{F(r_I)}     & F(V_I)
            } \qquad
   \xymatrix{                           & E_B \ar@{->>}[d]^{p_B} \\
              F(\Gamma.I) \ar[r]_{r_B}  & V_B
            } \]
Let $V_\amalg = \Pi_{F(p_I)}(F(E_I) \times V_B)$ as before.
We define $p : V_\amalg \times_{F(V_I)} F(E_I) \to V_\amalg$ and $q : Z \to V_\amalg \times_{F(V_I)} F(E_I)$ as before as pullbacks of $F(p_I)$ and $p_B$, respectively.
Let $p' : Y' \to V_\amalg'$ and $q' : Z' \to Y'$ be pullbacks along $\varepsilon_{V_\amalg}$ of $p$ and $q$, respectively.

Since $p' \circ q'$ is a quasifibration relative to $\scat{C}'$, we can factor it into a trivial cofibration $t : Z' \to E_\amalg'$ followed by a fibration $p_\amalg' : E_\amalg' \twoheadrightarrow V_\amalg'$.
We have a map $[F(r_I),r_B] : F(\Gamma) \to V_\amalg$ as before.
This map factors as a map $r_\amalg' : F(\Gamma) \to V_\amalg'$ followed by $\varepsilon_{V_\amalg}$.
We define the interpretation of $\coprod_{i : I} B$ as the pair $r_\amalg'$, $p_\amalg'$

Now, let us describe the interpretation of the introduction rule:
\begin{center}
\AxiomC{$\Gamma \vdash j : I$}
\UnaryInfC{$\Gamma \mid x : B_j \vdash \fs{in}_j(x) : \coprod_{i : I} B_i$}
\DisplayProof
\end{center}
The interpretation of $j$ is a section $j : \Gamma \to E_I$ of $p_I$ over $r_I$.
This map determines a section $j' : \Gamma \to \Gamma.I$ of the map $\Gamma.I \to \Gamma$.
Let $\Gamma.B_j$ be the following pullback:
\[ \xymatrix{ \Gamma.B_j \ar[rr] \ar@{->>}[d]_{p_{B_j}} \pb &                           & E_B \ar@{->>}[d]^{p_B} \\
              F(\Gamma) \ar[r]_-{F(j')}                     & F(\Gamma.I) \ar[r]_-{r_B} & V_B
            } \]
The map $\Gamma.B_j \to E_B$ induces a map $b' : \Gamma.B_j \to Z'$.
We define the interpretation of $\fs{in}_j$ as $t \circ b'$.

Finally, let us describe the interpretation of the elimination rule:
\begin{center}
\AxiomC{$\Gamma \mid z : \coprod_{i : I} B_i \vdash D$}
\AxiomC{$\Gamma, i : I \mid x : B_i \vdash d : D[\fs{in}_i(x)/z]$}
\BinaryInfC{$\Gamma \mid z : \coprod_{i : I} B_i \vdash \coprod\text{-}\fs{elim}(z.D, i x.d) : D$}
\DisplayProof
\end{center}
The interpretation of $D$ consists of a map $r_D : \Gamma.\amalg \to V_D$ together with a fibration $p_D : E_D \twoheadrightarrow V_D$, where $\Gamma.\amalg$ is the pullback of $r_\amalg'$ and $p_\amalg'$.
The interpretation of $d$ is a section $d : \Gamma.I.B \to E_D$ of $p_D$, where $\Gamma.I.B$ is the pullback of $r_B$ and $p_B$.

Let $s : \Pi_{p_\amalg'}(E_\amalg' \times V_D) \times_{V_\amalg'} Z' \to \Pi_{p_\amalg'}(E_\amalg' \times V_D)$ be the pullback of $p' \circ q'$ as before.
Let $X$ be the pullback of $p_D$ over $\Pi_{p_\amalg'}(E_\amalg' \times V_D) \times_{V_\amalg'} Z'$.
Let $P = \Pi_s(X)$ and let $t'$ be the pullback of $t$:
\[ \xymatrix{ P' \times_{V_\amalg'} Z' \ar[r] \ar[d]_{t'} \pb   & P \times_{V_\amalg'} Z' \ar[r] \ar[d] \pb                 & Z' \ar[d]^t                           \\
              P' \times_{V_\amalg'} E_\amalg' \ar[r] \ar[d] \pb & P \times_{V_\amalg'} E_\amalg' \ar[r] \ar@{->>}[d] \pb    & E_\amalg' \ar@{->>}[d]^{p_\amalg'}    \\
              P' \ar[r]_{\varepsilon_P}                         & P \ar[r]                                                  & V_\amalg'
            } \]
Since $p' \circ q'$ is a quasifibration, $t'$ is a trivial cofibration.
It follows that we have a lift in the following diagram:
\[ \xymatrix{ P' \times_{V_\amalg'} Z' \ar[r] \ar[d]_{t'}               & P \times_{V_\amalg'} Z' \ar[r]        & E_D \ar@{->>}[d]^{p_D} \\
              P' \times_{V_\amalg'} E_\amalg' \ar[r] \ar@{-->}[urr]^e   & P \times_{V_\amalg'} E_\amalg' \ar[r] & V_D
            } \]
The map $F(\Gamma) \to P$ is defined as before.
It factors through a map $F(\Gamma) \to P'$.
We define the interpretation of the eliminator as the composite $\Gamma.\amalg \to P' \times_{V_\amalg} E_\amalg \xrightarrow{e} E_D$, where the first map is induced by the map $F(\Gamma) \to P'$.
\end{proof}

\subsection{Pushouts}

The construction of pushouts was described in \cite{lum-shul-hits}.
It will be convenient to split it into two parts.

\begin{lem}[pushouts]
Let $\scat{B}$ be a category with fibrations, let $\scat{C}$ be a category with fibrations which has path types (as defined in Section~\ref{sec:ext}), and let $F : \scat{B} \to \scat{C}$ be a functor.
Suppose that, for every pair of maps $f : A \to B$ and $g : A \to C$ over an object $\Gamma$ of $\scat{C}$, the map $A \times \Delta^1 \amalg_{A \amalg A} B \amalg C \to \Gamma$ is a quasifibration.
Then $F^*(\scat{C}_!)$ has stable dependent pushouts.

If $\scat{C}$ is right proper, then the map $A \times \Delta^1 \amalg_{A \amalg A} B \amalg C \to \Gamma$ is a quasifibration.
\end{lem}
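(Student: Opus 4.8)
The plan is to prove the two assertions separately. For the first, assuming the double mapping cylinder $W = A \times \Delta^1 \amalg_{A \amalg A} B \amalg C$ gives a quasifibration over $\Gamma$, I would construct the pushout type and its rules in $F^*(\scat{C}_!)$ by following the construction of \cite[Theorem~3.3]{lum-shul-hits} essentially verbatim; as with \rprop{binary-coproducts}, the only discrepancy is that contexts here are fibrant over $F$ of a base context rather than absolutely fibrant, and this distinction is never used in that argument. Presenting $f : A \to B$ and $g : A \to C$ over the context object $\Gamma$ as usual, I form $W \to \Gamma$, which is a quasifibration by hypothesis, and factor it as a trivial cofibration $i : W \to P$ followed by a fibration $p : P \twoheadrightarrow \Gamma$. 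This $p$, packaged in the local-universes style, is the interpretation of the pushout type. The constructors $\fs{inl}$, $\fs{inr}$, and $\fs{glue}$ are the composites of $i$ with the structure maps $B \to W$, $C \to W$, and $A \times \Delta^1 \to W$; the path-type structure on $\scat{C}$ (Section~\ref{sec:ext}) lets the interval factor $A \times \Delta^1$ be read as a path, so the boundary equations $\fs{glue}\,a\,j_0 = \fs{inl}(f\,a)$ and $\fs{glue}\,a\,j_1 = \fs{inr}(g\,a)$ hold strictly, as they already do in $W$.

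For the eliminator I would exploit the lifting property of $i$. Given a dependent type $D$ over the pushout, presented by a fibration $E_D \twoheadrightarrow P$, and elimination data satisfying the boundary equations, the data assemble (again via the path-type reading of $A \times \Delta^1$) into a single map $W \to E_D$ over $P$; since $i$ is a trivial cofibration and $E_D \twoheadrightarrow P$ is a fibration, a lift $P \to E_D$ exists, and it is the required section, with the computation rules holding because the lift restricts along $i$ to the given data. The essential point is stability under substitution, and this is exactly where the full strength of the quasifibration hypothesis is needed rather than a bare factorization: reindexing pulls the factorization back along a map $\Gamma' \to \Gamma$, and the quasifibration condition guarantees that the pulled-back map $W' \to P'$ is again a trivial cofibration, so that the reindexed eliminator, defined by the same lift, agrees with the reindex of the original.

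For the second assertion I would follow the proof of \cite[Theorem~3.3]{lum-shul-hits}, of which \rlem{quasifib} records the coproduct special case. Here $W$ is built from the coproduct of fibrations $B \amalg C \to \Gamma$, which is a quasifibration by \rlem{quasifib}, by attaching the cylinder $A \times \Delta^1$ along the boundary inclusion $A \amalg A = A \times \partial \Delta^1 \hookrightarrow A \times \Delta^1$ and the map $f \amalg g : A \amalg A \to B \amalg C$. After factoring $W \to \Gamma$ as a trivial cofibration followed by a fibration, I would verify the defining pullback condition of a quasifibration directly, using right properness: that $\Pi_p$ preserves fibrations and that pullbacks of trivial cofibrations along fibrations are trivial cofibrations are what control the homotopy-pushout presentation under base change.

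The main obstacle in both parts is the same pullback-stability of the trivial-cofibration half of the factorization, and the difficulty is that in the quasifibration condition the base change $\Gamma' \to \Gamma$ is \emph{arbitrary}, not a fibration, so one cannot simply appeal to the stability of trivial cofibrations under pullback along fibrations. Establishing that this stability nevertheless holds for the double mapping cylinder is precisely the content imported from \cite{lum-shul-hits}, and it is the step that fails without right properness; everything else (the constructors, the eliminator, and the $\beta\eta$-rules) is then a routine consequence of the lifting property of $i$ and the strictness of the boundary identifications in $W$.
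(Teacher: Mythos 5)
Your high-level plan (import the construction of \cite{lum-shul-hits}, which is exactly what the paper's own one-line proof does) is right, but the way you flesh it out breaks at precisely the point the lemma is about: \emph{strict} stability under substitution. In the local universes model $F^*(\scat{C}_!)$, a type over a context is not a fibration over that context but a classifying map $r : \Delta \to V$ into a local universe together with a fibration $E \twoheadrightarrow V$, and substitution acts by precomposition with $r$ only. If, as you propose, you form $W \to \Gamma$ over the actual context and factor it there, and likewise produce the eliminator by choosing a lift of the elimination data over the context, then you have made a choice (of factorization, of lift) separately in every context, and nothing forces these choices to agree on the nose. Your stability argument --- that ``the reindexed eliminator, defined by the same lift, agrees with the reindex of the original'' --- is not the definition of an operation: the interpretation must compute the eliminator from the substituted data alone, and the substituted data does not remember the original context or its chosen lift. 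The quasifibration hypothesis guarantees that pulled-back trivial cofibrations are trivial cofibrations, hence that pulled-back lifts \emph{exist}; it cannot make independently chosen lifts equal.

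The construction in sections 4--6 of \cite{lum-shul-hits} (mirrored concretely in the paper's own proof of \rprop{coproducts}) avoids this by doing everything once over universal objects: the double mapping cylinder is formed and factored over a local universe $V_\amalg$ built from the universes of $A$, $B$, $C$ (not over $\Gamma$), and the eliminator's lift is performed over an object of the form $\Pi_s(X)$ classifying all elimination data (which is where exponentiability enters). A specific type or eliminator over a context is then obtained by precomposing with a classifying map into these universal objects, and precomposition commutes with substitution strictly. The quasifibration condition is used exactly there: the classifying maps into $V_\amalg$ and $\Pi_s(X)$ are arbitrary maps, not fibrations, so one needs the trivial cofibration half of the universal factorization to remain a trivial cofibration after such base changes. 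You correctly identify that arbitrary base change is the crux, but you locate the lift in the wrong place; as written, your construction yields only unstable (per-context) pushouts, not the stable dependent pushouts claimed. Two smaller points: the pushout construction is in sections 4--6 of \cite{lum-shul-hits}, while Theorem~3.3 there is the coproduct case; and your argument for the second assertion leans on \rlem{quasifib}, which assumes a type-theoretic model category, whereas the lemma's hypothesis is only a right proper category with fibrations with path types (right properness here meaning that $\Pi_p$ exists and preserves fibrations).
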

\begin{proof}
The construction is essentially the same as the construction of pushouts in sections 4, 5, and 6 of \cite{lum-shul-hits}.
\end{proof}

\begin{lem}[pushouts-proper]
If $\scat{C}$ is a locally Cartesian closed right proper type-theoretic model category which has path types, then it satisfies the conditions of \rlem{pushouts}.
\end{lem}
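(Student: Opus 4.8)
The plan is to check the two hypotheses of \rlem{pushouts}. Path types are assumed outright, so the only thing to prove is the quasifibration condition: for the objects $A,B,C$ over $\Gamma$ carrying the data (which we may take to be fibrations over $\Gamma$, as they arise as types over $\Gamma$) and maps $f : A \to B$, $g : A \to C$ over $\Gamma$, the double mapping cylinder map $p : A \times \Delta^1 \amalg_{A \amalg A} (B \amalg C) \to \Gamma$ is a quasifibration. The idea is to rebuild this map out of coproducts of fibrations and cylinder inclusions, using \rlem{quasifib} for the coproduct pieces and right properness to control the behaviour under pullback.

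First I would pin down where local Cartesian closedness is used. For every $r : \Gamma' \to \Gamma$ the pullback functor $r^* : \scat{C}/\Gamma \to \scat{C}/\Gamma'$ has a right adjoint $\Pi_r$, hence preserves all colimits; in particular coproducts over $\Gamma$ are stable under pullback. This is exactly the hypothesis of \rlem{quasifib}, which then shows that $B \amalg C \to \Gamma$ (and likewise $A \amalg A \to \Gamma$) is a quasifibration.

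Next I would analyse the cylinder. Since $\Delta^1$ is contractible in the path-type structure and the pushout-product axiom holds for $1 \amalg 1 \to \Delta^1$, each endpoint inclusion $\partial_k : A \to A \times \Delta^1$ is a trivial cofibration over $\Gamma$, and $A \amalg A \to A \times \Delta^1$ is assembled from them. Because pushouts of trivial cofibrations are always trivial cofibrations, gluing the cylinder onto $B \amalg C$ along $f \amalg g$ is governed by the staged mapping-cylinder construction of \cite{lum-shul-hits}. I would therefore factor $p$ as a trivial cofibration $A \times \Delta^1 \amalg_{A \amalg A} (B \amalg C) \to N$ followed by a fibration $N \twoheadrightarrow \Gamma$, with $N$ built from the factorization of $B \amalg C \to \Gamma$ produced by the previous step.

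The hard part will be the relative clause in the definition of a quasifibration: for every $\Gamma' \to \Gamma$ the map induced on the pullbacks of $p$ and of $N \twoheadrightarrow \Gamma$ must remain a trivial cofibration. This is where right properness is indispensable. Pulling the whole construction back along $\Gamma' \to \Gamma$ commutes with the coproducts (by their universality) and with the product by the fixed object $\Delta^1$, and right properness guarantees that the pullbacks along the fibration $N \twoheadrightarrow \Gamma$ of the weak equivalences appearing in the staged construction stay weak equivalences; hence the pulled-back trivial cofibration is again a trivial cofibration. This pullback-stability is precisely the quasifibration argument carried out in \cite{lum-shul-hits} (Theorem~3.3 together with sections~4--6), and I expect it to be the main obstacle. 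The role of the present lemma is then only to verify that local Cartesian closedness (for universality of coproducts), right properness, and path types supply exactly the ingredients that this argument requires.
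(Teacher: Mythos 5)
Your proposal is correct and is essentially the paper's own argument: the paper proves this lemma simply by citing \cite[Theorem~4.2]{lum-shul-hits} and noting that the simplicial structure assumed there is used only through path types, which is exactly the substitution you make by running the cylinder through $\Delta^1$ from the path-type structure. Your road map (universality of coproducts from local Cartesian closedness feeding \rlem{quasifib}, right properness for pullback-stability of the trivial-cofibration part, the gluing argument itself deferred to \cite{lum-shul-hits}) is a summary of that cited proof rather than an alternative to it.
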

\begin{proof}
This was proved in \cite[Theorem~4.2]{lum-shul-hits}.
The category $\scat{C}$ is assumed to be a locally Cartesian closed simplicial type-theoretic model category there, but every such category has path types and the existence of path types is enough for this proof to go through.
\end{proof}

\begin{example}
If $\scat{M}$ is the category of simplicial presheaves on a small category with its Cisinski model structure \cite{cisinski-presheaf,cisinski} and $\scat{J}$ is a small category,
then the injective model structure on $\scat{M}^\scat{J}$ satisfies the conditions of \rlem{pushouts}, so it has stable dependent pushouts.
\end{example}

Finally, let us state the unstable version of \rlem{pushouts}:

\begin{lem}[unstable-pushouts]
Let $\scat{B}$ be a category with fibrations, let $\scat{C}$ be a category with fibrations which has path types, and let $F : \scat{B} \to \scat{C}$ be a functor between them.
Suppose that $\scat{C}$ has a full subcategory $\scat{C}'$ such that, for every object $X \in \scat{C}$, there is a map $\varepsilon_X : X' \to X$ such that $X' \in \scat{C}'$ and,
for every fibrant object $\Gamma \in \scat{B}$, every map of the form $F(\Gamma) \to X$ factors through $\varepsilon_X$ and the factorization is natural in $\Gamma$.
Suppose that, for every pair of maps $f : A \to B$ and $g : A \to C$ over an object $\Gamma$ of $\scat{C}'$, the map $A \times \Delta^1 \amalg_{A \amalg A} B \amalg C \to \Gamma$ is a quasifibration relative to $\scat{C}'$.
Then $F^*(\scat{C}_!)$ has pushouts.

If trivial cofibrations of $\scat{C}$ are closed under pullbacks along pullbacks of every fibration with a codomain in $\scat{C}'$, then the map $A \times \Delta^1 \amalg_{A \amalg A} B \amalg C \to \Gamma$ is a quasifibration relative to $\scat{C}'$.
\end{lem}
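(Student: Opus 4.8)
The plan is to prove the two clauses in turn, obtaining the interpretation by a modification of \rlem{pushouts} that is entirely parallel to the way \rprop{unstable-coproducts} modifies \rprop{coproducts}. For the first clause I would begin by running the universe construction of \rlem{pushouts} (and hence of \cite{lum-shul-hits}): from the universes of a span $f : A \to B$, $g : A \to C$ one assembles a universe object $V$ carrying the generic span, and the pushout type is obtained by factoring the generic pushout-data map $A \times \Delta^1 \amalg_{A \amalg A} B \amalg C \to V$ into a trivial cofibration followed by a fibration $E \twoheadrightarrow V$. In the stable case this map is a quasifibration over $V$; here I only know it is a quasifibration \emph{relative to} $\scat{C}'$, and $V$ need not lie in $\scat{C}'$, so this map cannot be factored directly.

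The key modification is to pull the whole generic picture back along $\varepsilon_V : V' \to V$, landing over $V' \in \scat{C}'$. Since $V' \in \scat{C}'$, the span obtained by pulling back the generic span lives over an object of $\scat{C}'$, so by hypothesis the corresponding map $A' \times \Delta^1 \amalg_{A' \amalg A'} B' \amalg C' \to V'$ is a quasifibration relative to $\scat{C}'$ and in particular factors as a trivial cofibration followed by a fibration $p' : E' \twoheadrightarrow V'$. This $p'$ is the universe of the pushout type. For a concrete span in a context $\Gamma$ the classifying map $F(\Gamma) \to V$ factors through $\varepsilon_V$, by the factorization hypothesis, as $F(\Gamma) \to V' \xrightarrow{\varepsilon_V} V$; composing the first leg with $p'$ gives the interpretation of the pushout. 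The constructors $\fs{inl}$, $\fs{inr}$, $\fs{glue}$ and the eliminator are then transported from the proof of \rlem{pushouts} along this pullback, exactly as in \rprop{unstable-coproducts}.

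The main obstacle is reindexing stability, and it is the only place where the naturality of $\varepsilon$ is used in an essential way. I must check that the interpretation just described is stable under substitution of base terms, i.e.\ that reindexing along $r : \Delta \to \Gamma$ commutes with passing from $F(\Gamma) \to V$ to its lift $F(\Gamma) \to V'$. This is guaranteed precisely because the factorization $F(\Gamma) \to V' \to V$ is natural in the fibrant object $\Gamma$; the verification is identical to the corresponding step in \rprop{unstable-coproducts}. As in that proposition, the construction is \emph{not} expected to be stable under substitution within a fixed fibre $\scat{C}(\Gamma)$, which is why only the unstable version of pushouts is obtained.

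For the second clause I would verify the relative quasifibration condition straight from its definition. Factor $f : A \times \Delta^1 \amalg_{A \amalg A} B \amalg C \to \Gamma$, with $\Gamma \in \scat{C}'$, as a trivial cofibration $i : X \to Y$ followed by a fibration $p : Y \twoheadrightarrow \Gamma$. For any $Z' \to \Gamma$ with $Z' \in \scat{C}'$, the leg $Y' \to Y$ of the pullback of $p$ along $Z' \to \Gamma$ arises as a pullback of the fibration $p$, whose codomain $\Gamma$ lies in $\scat{C}'$; hence, by the closure hypothesis, the pullback $i' : X' \to Y'$ of the trivial cofibration $i$ along $Y' \to Y$ is again a trivial cofibration. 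This is exactly what is required for $f$ to be a quasifibration relative to $\scat{C}'$, which completes the second clause.
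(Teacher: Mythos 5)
Your first clause is essentially the paper's own proof: the paper disposes of it in one line (``the proof is the same as the proof of \rlem{pushouts} with adjustments as in \rprop{unstable-coproducts}''), and what you describe --- pulling the generic pushout data back along $\varepsilon_V : V' \to V$, factoring the relative quasifibration over $V' \in \scat{C}'$, inserting $\varepsilon$ again at the eliminator stage, and using naturality of the factorizations for stability under reindexing --- is exactly those adjustments.

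Your second clause, however, contains a genuine error. In the pullback square of $p : Y \twoheadrightarrow \Gamma$ along $Z' \to \Gamma$, the leg that is a pullback of the fibration $p$ is $Y' \to Z'$; the leg $Y' \to Y$, which is the one you must pull the trivial cofibration $i$ back along, is a pullback of the \emph{arbitrary} map $Z' \to \Gamma$. The closure hypothesis --- trivial cofibrations pull back along pullbacks of fibrations with codomain in $\scat{C}'$ --- therefore says nothing about $Y' \to Y$, and this step fails. A telling symptom is that your argument nowhere uses the specific shape $A \times \Delta^1 \amalg_{A \amalg A} B \amalg C$ of the map: were the argument valid, \emph{every} map into an object of $\scat{C}'$ would be a quasifibration relative to $\scat{C}'$, which is false (it is essentially a properness assertion) and would render the quasifibration hypotheses of \rlem{pushouts} and \rlem{quasifib} vacuous. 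The intended argument is the relative analogue of \cite[Theorem~4.2]{lum-shul-hits}, run through the unlabelled lemma closing the quasifibrations subsection of Section~\ref{sec:colimits}: factor $Z' \to \Gamma$ into a trivial cofibration $Z' \to Z''$ followed by a fibration $Z'' \twoheadrightarrow \Gamma$ and form the resulting $3 \times 3$ diagram. The closure hypothesis does give that $X'' \to Y''$ and $Y' \to Y''$ are trivial cofibrations (there one really is pulling trivial cofibrations back along pullbacks of the fibrations $Z'' \twoheadrightarrow \Gamma$ and $p$, both with codomain $\Gamma \in \scat{C}'$); but the remaining map $X' \to X''$ needs a separate input, namely that pulling the pushout-data map back along the trivial cofibration $Z' \to Z''$ yields a trivial cofibration. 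This is where the colimit structure finally enters: the map $Z' \times_\Gamma (A \times \Delta^1 \amalg_{A \amalg A} B \amalg C) \to Z'' \times_\Gamma (A \times \Delta^1 \amalg_{A \amalg A} B \amalg C)$ decomposes as a pushout of pullbacks of $Z' \to Z''$ along pullbacks of the fibrations $A, B, C \twoheadrightarrow \Gamma$, each a trivial cofibration by the closure hypothesis, and one concludes by gluing and then the 2-out-of-3 argument on the square formed by $X' \to X'' \to Y''$ and $X' \to Y' \to Y''$.
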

\begin{proof}
The proof is the same as the proof of \rlem{pushouts} with adjustments as in \rprop{unstable-coproducts}.
\end{proof}

\section{Localization}
\label{sec:loc}

In this section, we consider the following problem.
Let $\scat{C}$ be a category and let $\Fib$ and $\Fib'$ be two classes of fibrations of $\scat{C}$ such that $\Fib' \subseteq \Fib$.
If we know that a contextually indexed contextual category of the form $F^*((\scat{C},\Fib)_!)$ has some categorical constructions, when does $F^*((\scat{C},\Fib')_!)$ also have these constructions?

\subsection{Identity types}

Let $(\scat{C},\Fib)$ be a category with fibrations and let $\Fib'$ be a subclass of $\Fib$.
We will say that $\Fib'$ is \emph{closed under identity types} if every map in $((\scat{C},\Fib')/\Gamma)_\fs{f}$ factors into a trivial cofibration of $(\scat{C},\Fib)$ followed by a fibration in $\Fib'$.

\begin{lem}[subfib]
Let $(\scat{C},\Fib)$ be a category with fibrations and let $\Fib'$ be a subclass of $\Fib$.
Then $\Fib'$ is closed under identity types if and only if the following conditions hold:
\begin{enumerate}
\item \label{it:subfib-factor} Every map in $((\scat{C},\Fib')/\Gamma)_\fs{f}$ factors into a trivial cofibration of $(\scat{C},\Fib')$ followed by a fibration in $\Fib'$.
\item \label{it:subfib-we} A map in $((\scat{C},\Fib')/\Gamma)_\fs{f}$ is a trivial cofibration of $(\scat{C},\Fib')$ if and only if it is a trivial cofibration of $(\scat{C},\Fib)$.
\end{enumerate}
\end{lem}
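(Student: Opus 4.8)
The statement is an equivalence: $\Fib'$ is closed under identity types (meaning every map in $((\scat{C},\Fib')/\Gamma)_\fs{f}$ factors as a $(\scat{C},\Fib)$-trivial cofibration followed by a map in $\Fib'$) if and only if conditions \ref{it:subfib-factor} and \ref{it:subfib-we} hold. The plan is to prove both directions of the biconditional. The backward direction should be nearly immediate: if \ref{it:subfib-factor} holds, any map in $((\scat{C},\Fib')/\Gamma)_\fs{f}$ factors as a $(\scat{C},\Fib')$-trivial cofibration followed by a fibration in $\Fib'$, and by \ref{it:subfib-we} that $(\scat{C},\Fib')$-trivial cofibration is already a $(\scat{C},\Fib)$-trivial cofibration, giving exactly the required factorization. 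So the content lies in the forward direction.

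**Forward direction — deriving the factorization \ref{it:subfib-factor}.** Assume $\Fib'$ is closed under identity types. Take a map $i : A \to B$ in $((\scat{C},\Fib')/\Gamma)_\fs{f}$. By hypothesis it factors as a $(\scat{C},\Fib)$-trivial cofibration $j : A \to C$ followed by a map $p : C \to B$ in $\Fib'$. To get \ref{it:subfib-factor} I must upgrade $j$ to a $(\scat{C},\Fib')$-trivial cofibration, i.e.\ show $j$ has the left lifting property against all of $\Fib'$. But this is automatic: every fibration in $\Fib'$ is a fibration in $\Fib$ (since $\Fib' \subseteq \Fib$), and $j$ lifts against all of $\Fib$ by definition of being a $(\scat{C},\Fib)$-trivial cofibration, hence in particular against $\Fib'$. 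So $j$ is a $(\scat{C},\Fib')$-trivial cofibration and $p \in \Fib'$, giving \ref{it:subfib-factor}.

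**Forward direction — deriving the characterization \ref{it:subfib-we}.** The nontrivial half is \ref{it:subfib-we}. One inclusion is the lifting argument just used: any $(\scat{C},\Fib)$-trivial cofibration lifts against $\Fib \supseteq \Fib'$, so it is a $(\scat{C},\Fib')$-trivial cofibration. For the converse, suppose $i : A \to B$ in $((\scat{C},\Fib')/\Gamma)_\fs{f}$ is a $(\scat{C},\Fib')$-trivial cofibration; I want to show it is a $(\scat{C},\Fib)$-trivial cofibration. The plan is the standard retract argument: apply the closed-under-identity-types factorization to $i$ itself, writing $i = p \circ j$ with $j : A \to C$ a $(\scat{C},\Fib)$-trivial cofibration and $p : C \to B$ in $\Fib'$. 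Since $i$ lifts against $p \in \Fib'$, a lift in the square with $i$ on the left and $p$ on the right exhibits $i$ as a retract of $j$. As trivial cofibrations (classes defined by a left lifting property) are closed under retracts, $i$ is a $(\scat{C},\Fib)$-trivial cofibration. This retract step — and checking the squares genuinely commute so the retract diagram is valid — is the one place requiring care, and I expect it to be the main obstacle, though it is a routine application of the retract-of-factorization technique.

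**Assembly.** Combining these, the forward direction yields both \ref{it:subfib-factor} and \ref{it:subfib-we}, and the backward direction follows from the observation in the first paragraph. The whole argument rests only on $\Fib' \subseteq \Fib$ (so that $\Fib$-trivial cofibrations are $\Fib'$-trivial cofibrations) and the closure of lifting-property classes under retracts; no properness or exponentiability hypotheses are needed here.
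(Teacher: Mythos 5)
Your proof is correct and follows essentially the same route as the paper's: the backward direction assembles the factorization from conditions \ref{it:subfib-factor} and \ref{it:subfib-we}, the easy halves of the forward direction come from the containment $\Fib' \subseteq \Fib$ (so $(\scat{C},\Fib)$-trivial cofibrations are $(\scat{C},\Fib')$-trivial cofibrations), and the nontrivial half of \ref{it:subfib-we} is the same retract-of-factorization argument the paper invokes as ``the standard argument.'' No gaps; the retract square you flag as the delicate step does commute exactly as you describe.
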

\begin{proof}
First, suppose that $\Fib'$ is closed under identity types.
Since every trivial cofibration of $(\scat{C},\Fib)$ is a trivial cofibration of $(\scat{C},\Fib')$, the first condition and the ``if'' part of the second condition are obvious.
Let $f$ be a trivial cofibration in $((\scat{C},\Fib')/\Gamma)_\fs{f}$.
Factor it into a map $i$ which is a trivial cofibration of $(\scat{C},\Fib)$ followed by a fibration in $\Fib'$.
Since $f$ has the left lifting property with respect to $\Fib'$, the standard argument implies that it is a retract of $i$, hence also is a trivial cofibration of $(\scat{C},\Fib)$.

To prove the converse, it is enough to factor a map into a trivial cofibration in $((\scat{C},\Fib')/\Gamma)_\fs{f}$ followed by a fibration in $\Fib'$.
Then the second condition implies that the first map is also a trivial cofibration of $(\scat{C},\Fib)$.
\end{proof}

\begin{lem}[subfib-model-cats]
Let $(\scat{C},\Fib)$ be either a type-theoretic semi-fibration category or a model category and let $\Fib'$ be a subclass of $\Fib$ which is closed under retracts, compositions, and pullbacks and contains all trivial fibrations.
Then the following conditions are equivalent:
\begin{enumerate}
\item \label{it:subfib-model-id} The class $\Fib'$ is closed under identity types.
\item \label{it:subfib-model-fib} A map in $((\scat{C},\Fib')/\Gamma)_\fs{f}$ belongs to $\Fib$ if and only if it belongs to $\Fib'$.
\item \label{it:subfib-model-path} For every fibration $p : Y \twoheadrightarrow \Gamma$ in $\Fib'$,
the diagonal $Y \to Y \times_\Gamma Y$ factors as a trivial cofibration $Y \to P(Y)$ of $\scat{C}$ followed by a fibration $P(Y) \twoheadrightarrow Y \times_\Gamma Y$ in $\Fib'$.
\end{enumerate}
\end{lem}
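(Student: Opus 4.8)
The plan is to prove the cycle $\ref{it:subfib-model-id}\Rightarrow\ref{it:subfib-model-fib}\Rightarrow\ref{it:subfib-model-path}\Rightarrow\ref{it:subfib-model-id}$; in each biconditional the inclusion $\Fib'\subseteq\Fib$ makes one direction automatic, so only the substantive direction needs an argument. Throughout I would work inside the category of fibrant objects $(\scat{C}/\Gamma)_\fs{f}$, which by \rremark{semi-fib} and \rremark{semi-over} (or, in the model-category case, directly) is well-behaved enough that weak equivalences satisfy two-out-of-three, pullbacks of trivial fibrations are trivial fibrations, and trivial cofibrations are weak equivalences. I will use repeatedly that a fibration which is a weak equivalence is a trivial fibration and hence lies in $\Fib'$ by hypothesis, together with the closure of $\Fib'$ under pullbacks, compositions, and retracts.

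For $\ref{it:subfib-model-id}\Rightarrow\ref{it:subfib-model-fib}$, let $f\colon X\to Y$ be a map in $((\scat{C},\Fib')/\Gamma)_\fs{f}$ that lies in $\Fib$. Condition~\ref{it:subfib-model-id} lets me factor $f=p\circ i$ with $i$ a trivial cofibration of $(\scat{C},\Fib)$ and $p\in\Fib'$; since $i$ has the left lifting property against $f\in\Fib$, I get a retraction $k$ with $ki=\id_X$ and $fk=p$, exhibiting $f$ as a retract of $p$, so $f\in\Fib'$ by closure under retracts. For $\ref{it:subfib-model-fib}\Rightarrow\ref{it:subfib-model-path}$, let $p\colon Y\twoheadrightarrow\Gamma$ be in $\Fib'$; then $Y\times_\Gamma Y\to\Gamma$ is in $\Fib'$, and I factor the diagonal as $Y\xrightarrow{j}P(Y)\xrightarrow{\pi}Y\times_\Gamma Y$ with $j$ a trivial cofibration of $(\scat{C},\Fib)$ and $\pi\in\Fib$. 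The key observation is that the first-projection composite $d_1=\pi_1\circ\pi\colon P(Y)\to Y$ is a fibration satisfying $d_1 j=\id_Y$, hence a weak equivalence, hence a trivial fibration lying in $\Fib'$; therefore $P(Y)\to\Gamma$ is in $\Fib'$, so both source and target of $\pi$ are $\Fib'$-fibrant over $\Gamma$ and condition~\ref{it:subfib-model-fib} upgrades $\pi$ to a map in $\Fib'$, giving the desired factorization.

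For $\ref{it:subfib-model-path}\Rightarrow\ref{it:subfib-model-id}$, given $g\colon A\to B$ in $((\scat{C},\Fib')/\Gamma)_\fs{f}$ I would build the mapping path space $P_g=A\times_{g,B,d_1}P(B)$ from the path object $(d_1,d_2)\colon P(B)\to B\times_\Gamma B$ supplied by condition~\ref{it:subfib-model-path}, whose reflexivity section $r\colon B\to P(B)$ is a trivial cofibration. Pulling $(d_1,d_2)$ back along $g\times_\Gamma\id_B$ and composing with a projection shows the structural map $\rho\colon P_g\to B$ lies in $\Fib'$, while $q\colon P_g\to A$ is a pullback of $d_1$, which is a trivial fibration because $d_1 r=\id_B$ with $r$ a weak equivalence; thus $q$ is a trivial fibration with a section $\sigma=\langle\id_A,rg\rangle$ satisfying $\rho\sigma=g$. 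The main obstacle is that $\sigma$, being merely a section of a trivial fibration, need not itself be a trivial cofibration, so I cannot directly read off the factorization $g=\rho\sigma$. I would circumvent this by factoring $\sigma=p'\circ i$ with $i$ a trivial cofibration of $(\scat{C},\Fib)$ and $p'\in\Fib$; then $qp'i=\id_A$ forces $p'$ to be a weak equivalence by two-out-of-three, hence a trivial fibration in $\Fib'$, so $\rho p'\in\Fib'$ and $g=(\rho p')\circ i$ is the required factorization into a trivial cofibration of $(\scat{C},\Fib)$ followed by a fibration in $\Fib'$. This re-factoring device — replacing an ill-behaved section by the fibration part of its own factorization — is the crux of the argument and the step I expect to demand the most care.
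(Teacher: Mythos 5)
Your proof is correct and follows the paper's argument essentially step for step: the same cycle of implications, the same retract argument for the first implication, the same use of condition (2) to upgrade the path-object fibration $P(Y) \twoheadrightarrow Y \times_\Gamma Y$ after checking that $P(Y)$ is $\Fib'$-fibrant over $\Gamma$, and the same mapping-path-space factorization for the last implication. The ``re-factoring device'' you single out as the crux---factoring the section of the trivial fibration and using 2-out-of-3 to see that its fibration part is a trivial fibration, hence lies in $\Fib'$---is exactly how the paper concludes as well.
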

\begin{proof}
\eqref{it:subfib-model-id} $\implies$ \eqref{it:subfib-model-fib}
Since $\Fib' \subseteq \Fib$, we just need to prove that every map in $((\scat{C},\Fib')/\Gamma)_\fs{f}$ which belongs to $\Fib$ also belongs to $\Fib'$.
Let $f : X \to Y$ be such a map.
Factor it into a trivial cofibration $i : X \to Z$ followed by a fibration $p : Z \twoheadrightarrow Y$ in $\Fib'$.
Since $i$ has the left lifting property with respect to $f$, the standard argument implies that $f$ is a retract of $p$.
Since $\Fib'$ is closed under retracts, $f$ belongs to $\Fib'$.

\eqref{it:subfib-model-fib} $\implies$ \eqref{it:subfib-model-path}
Let $p : Y \twoheadrightarrow \Gamma$ be a fibration in $\Fib'$.
Factor the diagonal $Y \to Y \times_\Gamma Y$ into a trivial cofibration $Y \to P(Y)$ of $\scat{C}$ followed by a fibration $q : P(Y) \to Y \times_\Gamma Y$ in $\Fib$.
Since $\Fib'$ is closed under pullbacks, the projection $\pi_1 : Y \times_\Gamma Y \to Y$ belongs to $\Fib'$.
Since $\Fib'$ contains all trivial fibrations, the map $\pi_1 \circ q$ also belongs to $\Fib'$.
It follows that $q$ belongs to $\Fib'$.

\eqref{it:subfib-model-path} $\implies$ \eqref{it:subfib-model-id}
Let $f : X \to Y$ be a map in $((\scat{C},\Fib')/\Gamma)_\fs{f}$.
We can factor $f$ as usual as $X \to Z \xrightarrow{q} X \times_\Gamma Y \xrightarrow{\pi_2} Y$, where $q$ is a pullback of $P(Y) \twoheadrightarrow Y \times_\Gamma Y$:
\[ \xymatrix{ Z \ar[r] \ar@{->>}[d]_q \pb                       & P(Y) \ar@{->>}[d] \\
              X \times_\Gamma Y \ar[r] \ar@{->>}[d]_{\pi_1} \pb & Y \times_\Gamma Y \ar@{->>}[d]^{\pi_1} \\
              X \ar[r]_f                                        & Y
            } \]
Since $\Fib'$ is closed under pullbacks, $q$ belongs to $\Fib'$.
The projection $\pi_2 : X \times_\Gamma Y \to Y$ also belongs to $\Fib'$ since it is a pullback of the map $X \twoheadrightarrow \Gamma$ which belongs to $\Fib'$.
Since $\Fib'$ is closed under compositions, the map $\pi_2 \circ q$ belongs to $\Fib'$.
Thus, we just need to prove that the map $X \to Z$ factors into a trivial cofibration followed by a fibration in $\Fib'$.
Since $\pi_1 \circ q$ is a pullback of the trivial fibration $P(Y) \twoheadrightarrow Y$, it is also a trivial fibration.
Thus, $X \to Z$ is a weak equivalence by 2-out-of-3.
Factor it into a trivial cofibration $X \to Z'$ followed by a fibration $Z' \to Z$.
By 2-out-of-3, $Z' \to Z$ is a trivial fibration.
Hence, it belongs to $\Fib'$.
\end{proof}

\begin{prop}[localized-fib]
Let $(\scat{C},\Fib)$ be a type-theoretic semi-fibration category and let $\Fib'$ be a subclass of $\Fib$ which contains identity morphisms and closed under compositions, pullbacks, and identity types.
Then $(\scat{C},\Fib')$ is also a type-theoretic semi-fibration category.
Moreover, $(\scat{C},\Fib')$ is closed under identity types in the sense that identity types in this category are equivalent to identity types in $(\scat{C},\Fib)$.
\end{prop}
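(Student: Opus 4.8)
The plan is to verify the five conditions of \rdefn{ttsfc} for the pair $(\scat{C},\Fib')$ and then to identify the resulting identity types with those of $(\scat{C},\Fib)$. The one subtlety to keep in mind is that the notion of trivial cofibration depends on the ambient class of fibrations: a trivial cofibration of $(\scat{C},\Fib)$ lifts against all of $\Fib$ and is therefore also a trivial cofibration of $(\scat{C},\Fib')$, but a priori not conversely. The hypothesis that $\Fib'$ is closed under identity types is exactly what repairs this, and I would feed it in through \rlem{subfib}, which gives two facts: every map between objects that are fibrant over a base in $\Fib'$ factors as a trivial cofibration of $(\scat{C},\Fib')$ followed by a fibration in $\Fib'$, and such a map is a trivial cofibration of $(\scat{C},\Fib')$ if and only if it is a trivial cofibration of $(\scat{C},\Fib)$. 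The second statement is the workhorse: it lets me transport trivial cofibrations between the two structures, provided the objects at hand are fibrant over the base in $\Fib'$, and this fibrancy is preserved by the pullbacks and composites occurring in \rdefn{ttsfc} because $\Fib'$ is closed under those operations.

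With this in hand, the first two conditions are immediate, since $\Fib'$-fibrations are $\Fib$-fibrations (hence exponentiable) and the closure of $\Fib'$ under identities, compositions, and pullbacks is assumed; the factorization condition is literally the first clause of \rlem{subfib}. For the two pullback-stability conditions, I would take a trivial cofibration $i$ of $(\scat{C},\Fib')$ between $\Fib'$-fibrant objects over $\Gamma$, use \rlem{subfib} to reread it as a trivial cofibration of $(\scat{C},\Fib)$, apply the corresponding condition of the ambient structure $(\scat{C},\Fib)$ to obtain a trivial cofibration of $(\scat{C},\Fib)$ after pulling back, and finally check that the new objects are still $\Fib'$-fibrant over the relevant base (using closure of $\Fib'$ under pullbacks and compositions) so that \rlem{subfib} converts the result back into a trivial cofibration of $(\scat{C},\Fib')$. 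This closes the verification that $(\scat{C},\Fib')$ is a type-theoretic semi-fibration category.

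For the \emph{moreover} clause I would exhibit a single path object that serves in both structures. The identity type of a fibration $Y \twoheadrightarrow \Gamma$ in $\Fib'$ is built from a factorization of the diagonal $Y \to Y \times_\Gamma Y$, and by the defining property of closure under identity types this diagonal factors as a trivial cofibration $Y \to P(Y)$ of $(\scat{C},\Fib)$ followed by a fibration $P(Y) \twoheadrightarrow Y \times_\Gamma Y$ in $\Fib'$. This factorization is simultaneously a legitimate path object for $(\scat{C},\Fib')$ (the first map is then a trivial cofibration of $(\scat{C},\Fib')$, and the second lies in $\Fib'$) and for $(\scat{C},\Fib)$ (the second map lies in $\Fib' \subseteq \Fib$). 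Since the same $P(Y)$ represents the identity type in each structure and identity types are determined up to equivalence by any such path object, the identity types of $(\scat{C},\Fib')$ are equivalent to those of $(\scat{C},\Fib)$.

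I expect the main obstacle to be purely a matter of bookkeeping rather than of new ideas: in the two pullback-stability conditions one must be careful that \rlem{subfib} applies only to maps between objects that are fibrant over the base \emph{in $\Fib'$}, so at each pullback or composite I must confirm that fibrancy in $\Fib'$, and not merely in $\Fib$, has been retained. Once this is tracked, every step collapses to an invocation of \rlem{subfib} together with the corresponding condition for $(\scat{C},\Fib)$, and no further homotopical input is needed.
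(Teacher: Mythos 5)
Your proposal is correct and takes essentially the same approach as the paper: it likewise deduces the five axioms for $(\scat{C},\Fib')$ from \rlem{subfib} (your detailed bookkeeping is just the expansion of the paper's ``easily follows''), and it proves the \emph{moreover} clause exactly as the paper does, by factoring the diagonal into a trivial cofibration of $(\scat{C},\Fib)$ followed by a fibration in $\Fib'$, noting that this single factorization interprets identity types in both structures, and invoking uniqueness of identity types up to equivalence. One minor simplification: the ``conversion back'' step in your verification of the two pullback-stability conditions is automatic, since any trivial cofibration of $(\scat{C},\Fib)$ lifts against $\Fib' \subseteq \Fib$ and is therefore already a trivial cofibration of $(\scat{C},\Fib')$, so no fibrancy check is needed at that stage.
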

\begin{proof}
The fact that $(\scat{C},\Fib')$ is a type-theoretic semi-fibration category easily follows from \rlem{subfib}.
The interpretation of identity types is defined as a factorization of the map $E_A \to E_A \times_{V_A} E_A$ into a trivial cofibration followed by a fibration.
We can factor this map into a trivial cofibration of $(\scat{C},\Fib)$ followed by a fibration in $\Fib'$.
Then this type is an interpretation of the identity type in both $(\scat{C},\Fib)$ and $(\scat{C},\Fib')$.
Since identity types are unique up to an equivalence, this implies that they are equivalent in these categories.
\end{proof}

\begin{cor}[localized-con-fib]
Let $F$ be a left relative Quillen functor between type-theoretic semi-fibration categories $\scat{B}$ and $\scat{C}$ such that $F^*(\scat{C}_!)$ has extensional identity types.
Let $\Fib'$ be a subclass of fibrations of $\scat{C}$ which contains identity morphisms and closed under compositions, pullbacks, and identity types.
Then $F^*((\scat{C},\Fib')_!)$ is a contextually indexed contextual category with unit types, $\Sigma$-types, and extensional identity types.
\end{cor}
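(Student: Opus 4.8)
The plan is to obtain unit types, $\Sigma$-types and identity types from results already in hand and then to verify extensionality by checking that the relevant comparison map is unchanged when we pass from $\Fib$ to $\Fib'$. Since $\Fib'$ contains identity morphisms and is closed under compositions, pullbacks, and identity types, \rprop{localized-fib} shows that $(\scat{C},\Fib')$ is again a type-theoretic semi-fibration category and that its identity types agree with those of $(\scat{C},\Fib)$; concretely, a diagonal $E_A \to E_A \times_{V_A} E_A$ may be factored as a trivial cofibration of $(\scat{C},\Fib)$ followed by a fibration in $\Fib'$, and this single object interprets the identity type in both structures. Applying the first part of \rprop{indexed-locally-small} to $F : \scat{B} \to (\scat{C},\Fib')$---the part that does not use right properness---then gives that $F^*((\scat{C},\Fib')_!)$ has unit types, $\Sigma$-types, and identity types.

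Next I would invoke \rprop{localized-hom} to transfer the dependent $\Hom$-types. The assumption that $F^*(\scat{C}_!)$ has \emph{extensional} identity types already presupposes that it has dependent $\Hom$-types, for otherwise the comparison map $\Id_{\Hom(\Delta.B)}(f,g) \to \Hom(\Delta.\,\Id_B(f\,\overline{x},g\,\overline{x}))$ could not even be written down. Hence \rprop{localized-hom} applies, and by its proof the dependent $\Hom$-types of $F^*((\scat{C},\Fib')_!)$ are \emph{literally} those of $F^*((\scat{C},\Fib)_!)$: every type of $(\scat{C},\Fib')_!$ is a type of $(\scat{C},\Fib)_!$, and the base types, base terms, and indexed terms of the two indexings coincide.

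It remains to prove extensionality, i.e.\ that the comparison map above is an equivalence for every indexed context $\Delta$, type $B$, and terms $f,g$. Here the key observation is that all of its data can be taken to agree in the two indexings: the dependent $\Hom$-types $\Hom(\Delta.B)$ and $\Hom(\Delta.\,\Id_B(\ldots))$ are inherited from $\Fib$, while the identity type $\Id_B$ may be chosen as the common object produced by \rprop{localized-fib}, and the outer identity type $\Id_{\Hom(\Delta.B)}(f,g)$ is an identity type of a \emph{base} type, hence built from a path object of $\scat{B}$, whose fibrations do not change. Since the comparison map is canonical---definable by the identity eliminator from these common type formers---it is the very same map of base types in $F^*((\scat{C},\Fib')_!)$ as in $F^*((\scat{C},\Fib)_!)$.

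Finally, because this comparison map is a map of \emph{base} types, the assertion that it is an equivalence is a statement purely about the contextual category $\scat{B}_!$ and its identity types, and $\scat{B}$ is the same in both situations. By hypothesis the map is an equivalence in $F^*((\scat{C},\Fib)_!)$, so it is an equivalence in $F^*((\scat{C},\Fib')_!)$ as well, and the identity types are extensional. The step I expect to require the most care is the one in the previous paragraph: making precise, via the proofs of \rprop{localized-fib} and \rprop{localized-hom}, that the endpoints and hence the canonical comparison map genuinely coincide in the two indexings, so that its equivalence can be read off from the base alone rather than re-derived.
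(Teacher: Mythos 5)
Your proposal is correct and follows essentially the same route as the paper, whose entire proof is the citation ``This follows from Proposition~\ref{prop:indexed-locally-small} and Proposition~\ref{prop:localized-fib}'': one applies \rprop{localized-fib} to get that $(\scat{C},\Fib')$ is a type-theoretic semi-fibration category with the same identity types, \rprop{indexed-locally-small} for unit types, $\Sigma$-types, and identity types, and then transfers extensionality because the interpreting data coincide. Your explicit appeal to \rprop{localized-hom} for the dependent $\Hom$-types and your careful argument that the comparison map is literally the same map of base types are exactly the details the paper leaves implicit.
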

\begin{proof}
This follows from \rprop{indexed-locally-small} and \rprop{localized-fib}.
\end{proof}

The following theorem provides a large class of examples of contextually indexed contextual categories.

\begin{thm}
Let $\scat{C}$ be a right proper combinatorial type-theoretic semi-fibration category in which all objects are cofibrant and let $F : \scat{J} \to \scat{I}$ be a functor between small categories.
Let $\Fib$ be a class of injective fibrations which contains identity morphisms and closed under compositions, pullbacks, and identity types.
Then $F^*((\scat{C}^\scat{J},\Fib)_!)$ is a locally small contextually indexed contextual category with dependent $\Hom$-types, unit types, $\Sigma$-types, and extensional identity types.
\end{thm}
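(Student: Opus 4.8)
The plan is to deduce the theorem by first establishing all the required constructions for the full injective structure on $\scat{C}^\scat{J}$ and then transferring them down to the subclass $\Fib$ using the localization results of Section~\ref{sec:loc}. Throughout, the change-of-base functor is the restriction functor $F^* : \scat{C}^\scat{I} \to \scat{C}^\scat{J}$, so the base contextual category is $\scat{C}^\scat{I}$ and the target is $\scat{C}^\scat{J}$, both with their injective structures. The standing assumption that all objects of $\scat{C}$ are cofibrant presupposes a model structure, so $\scat{C}$ is in fact a right proper combinatorial type-theoretic model category. By \rcor{injective-combinatorial}, $(\scat{C}^\scat{J}, \text{injective})$ is then a right proper extensional combinatorial type-theoretic semi-fibration category, and by the example following that corollary the functor $F^*$ is a left Quillen functor whose adjunction $F^* \dashv \fs{Ran}_F$ is extensional. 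In particular $F^*$ is a left relative Quillen functor.

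First I would apply \rprop{indexed-locally-small} to the functor $F^*$ and the injective structure on $\scat{C}^\scat{J}$. Since $(\scat{C}^\scat{J}, \text{injective})$ is right proper, part~(1) gives $\Pi$-types and part~(2) gives dependent $\Hom$-types and hence local smallness for the injective-structure model $F^*((\scat{C}^\scat{J})_!)$; the proposition supplies unit types, $\Sigma$-types, and identity types unconditionally. Because $(\scat{C}^\scat{J}, \text{injective})$ is extensional and the relative adjunction $F^* \dashv \fs{Ran}_F$ is extensional, the same proposition yields that these identity types are extensional. Thus, at the level of the full injective structure, $F^*((\scat{C}^\scat{J})_!)$ is a locally small contextually indexed contextual category with dependent $\Hom$-types, unit types, $\Sigma$-types, and extensional identity types.

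Next I would descend to $\Fib$. The hypothesis that $\Fib$ is a subclass of the injective fibrations containing identity morphisms and closed under compositions, pullbacks, and identity types is exactly what \rcor{localized-con-fib} requires of its subclass $\Fib'$. Applying that corollary with base $\scat{C}^\scat{I}$, ambient category $(\scat{C}^\scat{J}, \text{injective})$, change-of-base functor $F^*$, and $\Fib' = \Fib$---and using the extensional identity types established in the previous step---gives that $F^*((\scat{C}^\scat{J}, \Fib)_!)$ has unit types, $\Sigma$-types, and extensional identity types. For the remaining two properties I would invoke \rprop{localized-hom} with the inclusion $\Fib \subseteq \text{injective}$: since $F^*((\scat{C}^\scat{J})_!)$ with the injective structure is locally small and has dependent $\Hom$-types, so does $F^*((\scat{C}^\scat{J}, \Fib)_!)$. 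Combining these two applications covers every assertion of the theorem.

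The work here is organizational rather than computational: the single real obstacle is to keep the two fibration classes and the two uses of the letter $F$ straight, matching the roles of $\scat{B}$, $\scat{C}$, and the change-of-base functor across \rprop{indexed-locally-small}, \rcor{localized-con-fib}, and \rprop{localized-hom}. The most delicate input is that the ``all objects cofibrant'' hypothesis is precisely what supplies both the extensionality of $(\scat{C}^\scat{J}, \text{injective})$ via \rcor{injective-combinatorial} and the extensionality of the Quillen adjunction $F^* \dashv \fs{Ran}_F$; without these two facts the extensional-identity-type conclusion---which is itself a hypothesis of \rcor{localized-con-fib}---would not be available, and the descent to $\Fib$ would break down.
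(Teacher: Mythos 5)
Your proposal is correct and follows essentially the same route as the paper's own proof: establish the injective-structure model via \rcor{injective-combinatorial} and \rprop{indexed-locally-small}, then descend to $\Fib$ using \rcor{localized-con-fib} for unit, $\Sigma$, and extensional identity types and \rprop{localized-hom} for dependent $\Hom$-types and local smallness. You merely spell out details the paper leaves implicit, such as reading the ``all objects cofibrant'' hypothesis as supplying a model structure so that the extensionality clauses of \rcor{injective-combinatorial} and of the adjunction $F^* \dashv \fs{Ran}_F$ apply.
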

\begin{proof}
By \rcor{injective-combinatorial} and \rprop{indexed-locally-small}, $(\scat{C}^\scat{J},\Fib_\fs{inj})$ is a type-theoretic semi-fibration category with dependent $\Hom$-types, unit types, $\Sigma$-types, and extensional identity types.
By \rcor{localized-con-fib}, $F^*((\scat{C}^\scat{J},\Fib)_!)$ is a contextually indexed contextual category with unit types, $\Sigma$-types, and extensional identity types.
By \rprop{localized-hom}, it has dependent $\Hom$-types.
\end{proof}

\subsection{Localization of model categories}

Let $S$ be a class of maps of a model category $\scat{C}$.
Then the class of fibrations that have the right lifting property with respect to $S$ is not necessarily closed under identity types, but if $S$ satisfies certain closure conditions, then this is true.
If $\Gamma$ is an object of $\scat{C}$, we will write $S/\Gamma$ for the class of maps $\scat{C}/\Gamma$ which consists of maps such that their underlying maps in $\scat{C}$ belong to $S$.
We will say that $S$ is closed under \emph{coidentity types} if, for every object $\Gamma$ and every map $i : U \to V$ in $S/\Gamma$, there exist cylinder objects $C(U)$ for $U$ and $C(V)$ for $V$
and a map of cylinder objects $C(i) : C(U) \to C(V)$ such that the map $i' : C(U) \amalg_{(U \amalg U)} (V \amalg V) \to C(V)$
has the left lifting property with respect to every fibrant object of $\scat{C}/\Gamma$ which has the right lifting property with respect to $S$ as a map in $\scat{C}$.

\begin{remark}[coidentity]
Let $S$ be a class of maps of a category $\scat{C}$.
We will write $\Iinj[S]$ for the class of maps of $\scat{C}$ that have the right lifting property with respect to $S$.
We will write $\Icof[S]$ for the class of maps of $\scat{C}$ that have the left lifting property with respect to $\Iinj[S]$.

If $\J$ is the class of trivial cofibrations of $\scat{C}$ and $S$ is a class of maps of $\scat{C}$ such that, for every $i \in S$, there exists a map $i'$ as described above which belongs to $\Icof[(S \cup \J)]$, then $S$ is closed under coidentity types.
\end{remark}

\begin{lem}[coidentity]
Let $S$ be a class of cofibrations of a model category $\scat{C}$ closed under coidentity types.
Suppose that either the domains of maps in $S$ are cofibrant or $\scat{C}$ is left proper.
Then the class of fibrations that have the right lifting property with respect to $S$ is closed under identity types.
\end{lem}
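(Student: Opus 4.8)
The plan is to apply \rlem{subfib-model-cats} to $(\scat{C},\Fib)$ with $\Fib' = \Fib \cap \Iinj[S]$, the class of fibrations having the right lifting property against $S$. First I would check that $\Fib'$ meets that lemma's hypotheses: as an intersection of two classes each defined by a lifting property, $\Fib'$ is closed under retracts, compositions, and pullbacks; and since every map in $S$ is a cofibration, every trivial fibration lies in $\Iinj[S]$, so $\Fib'$ contains all trivial fibrations. It therefore suffices to verify condition~\eqref{it:subfib-model-path}: for every fibration $p : Y \twoheadrightarrow \Gamma$ in $\Fib'$, the diagonal $Y \to Y \times_\Gamma Y$ factors as a trivial cofibration $Y \to P(Y)$ of $\scat{C}$ followed by a map in $\Fib'$.

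To produce such a factorization I would take any path object, that is, factor the diagonal in the model category $\scat{C}$ as a trivial cofibration $r : Y \to P(Y)$ followed by a fibration $q = (q_0,q_1) : P(Y) \to Y \times_\Gamma Y$; by two-out-of-three the endpoint projections $q_0,q_1 : P(Y) \to Y$ are trivial fibrations and hence already lie in $\Fib'$. The real content is to show $q \in \Iinj[S]$, i.e.\ that $q$ lifts against each $i : U \to V$ in $S$. In a lifting square for $q$ against $i$, the two components of $V \to Y \times_\Gamma Y$ are maps $b_0,b_1 : V \to Y$ over $\Gamma$, so $i$ acquires the structure of a map in $S/\Gamma$ and $p$ is a fibrant object of $\scat{C}/\Gamma$ lying in $\Fib'$; the coidentity hypothesis then supplies cylinder objects $C(U)$, $C(V)$ and a cylinder map $C(i)$ whose pushout-corner $i' : C(U)\amalg_{U\amalg U}(V\amalg V) \to C(V)$ lifts against $Y \to \Gamma$ over $\Gamma$.

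The bridge between the two lifting problems is a transposition between right homotopies into $P(Y)$ and left homotopies out of the cylinders $C(U)$, $C(V)$. The top map $a : U \to P(Y)$ of the square is a right homotopy between $b_0 i$ and $b_1 i$; I would convert it into a left homotopy $h : C(U) \to Y$ with the same endpoints, assemble $h$ with $b_0,b_1$ into a map $\phi : C(U)\amalg_{U\amalg U}(V\amalg V) \to Y$ over $\Gamma$, and feed $\phi$ to the coidentity lift of $i'$ to obtain a left homotopy $\ell : C(V) \to Y$ from $b_0$ to $b_1$ that restricts along $C(i)$ to $h$. Transposing $\ell$ back across $q$ (using that an endpoint inclusion $V \to C(V)$ is a trivial cofibration while $q$ is a fibration) yields $\tilde a : V \to P(Y)$ with $q\tilde a = (b_0,b_1)$. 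The hypothesis that the domains of $S$ are cofibrant (so $U$, and since $i$ is a cofibration also $V$, are cofibrant) is exactly what makes these left/right homotopy conversions valid for cofibrant source and fibrant target; when the domains need not be cofibrant, left properness of $\scat{C}$ is what keeps the relevant pushouts along the cylinder inclusions homotopically well behaved.

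The step I expect to be the main obstacle is making the back-transposition \emph{relative}: the naive conversion of $\ell$ only guarantees $q\tilde a = (b_0,b_1)$, whereas a genuine lift also requires $\tilde a \circ i = a$. I would address this by carrying out the whole transposition relative to $i$ from the start, building the homotopy $C(V) \to P(Y)$ as a lift relative to a chosen homotopy $C(U) \to P(Y)$ that extends $a$, so that compatibility over $U$ is preserved by construction; since $j_1^{V} i = C(i)\, j_1^{U}$, this forces $\tilde a \circ i = a$. It is precisely here that the interplay between the cylinder map $C(i)$, the endpoint inclusions, and the cofibrancy/left-properness hypothesis must be used with care. Once relativeness is secured, $\tilde a$ solves the original square, $q$ lies in $\Fib'$, and condition~\eqref{it:subfib-model-path} of \rlem{subfib-model-cats} holds, completing the proof.
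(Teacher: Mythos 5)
Your reduction via \rlem{subfib-model-cats} (with $\Fib' = \Fib \cap \Iinj[S]$) and your three-stage outline — convert the right homotopy $a : U \to P(Y)$ into left-homotopy data, extend over $C(V)$ using the coidentity lifting property of $i'$, then transpose back into $P(Y)$ — is exactly the route the paper takes. But there is a genuine gap, and it sits precisely at the step you yourself flag as ``the main obstacle'': you never construct the relative back-transposition, you only promise it can be done ``with care.'' The paper's resolution is concrete: it forms the pushout $C(U) \amalg_U V$ (gluing $V$ onto one end of the cylinder on $U$ along $i$), produces a map $h' : C(U) \amalg_U V \to P(Y)$ extending $a$ by lifting against the trivial fibration $p_0 : P(Y) \twoheadrightarrow Y$, and then takes the final lift against the map $j : C(U) \amalg_U V \to C(V)$ into the fibration $P(Y) \twoheadrightarrow Y \times_\Gamma Y$. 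The whole proof hinges on showing $j$ is a trivial cofibration, and \emph{this} is the unique place where the hypothesis enters: $V \to C(U) \amalg_U V$ is the pushout of the weak equivalence $U \to C(U)$ along the cofibration $i$, hence a weak equivalence either because $U$ is cofibrant (making $U \to C(U)$ a trivial cofibration) or by left properness; 2-out-of-3 then handles $j$. Without identifying this map and proving it is a trivial cofibration, your plan of ``building $C(V) \to P(Y)$ as a lift relative to a chosen homotopy on $C(U)$'' has no lifting axiom to invoke. Note in particular that $i'$ itself cannot supply this lift: its lifting property is only against fibrant objects of $\scat{C}/\Gamma$ having the right lifting property with respect to $S$, i.e.\ against maps to $\Gamma$ such as $Y \to \Gamma$, not against the path-space fibration $P(Y) \twoheadrightarrow Y \times_\Gamma Y$, which is what endpoint control requires.

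There is a second, smaller defect: your forward conversion of $a$ into a left homotopy $h : C(U) \to Y$ invokes the standard left/right homotopy comparison, which requires $U$ cofibrant; in the left-proper branch of the hypothesis that step is unjustified. The paper sidesteps this entirely: it never converts $a$ into a homotopy valued in $Y$, but instead builds $h'$ valued in $P(Y)$ by lifting $[t_0, a]$ (with $t_0$ the degenerate path at $y_0$) against the trivial fibration $p_0$, which needs only that $V \amalg U \to C(U) \amalg_U V$ is a cofibration — and that is automatic, being a cobase change of the cofibration $U \amalg U \to C(U)$. So both branches of the hypothesis are handled uniformly, with cofibrancy/properness consumed in exactly one lemma-sized statement about $j$. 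In short: your skeleton is the right one, but the load-bearing construction — the specific trivial cofibration over which the relative lift is taken — is missing, and it is not a detail that ``care'' alone fills in.
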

\begin{proof}
Let $P(Y)$ be a path object for a fibration $Y \twoheadrightarrow \Gamma$ which has the right lifting property with respect to $S$.
By \rlem{subfib-model-cats}, we just need to prove that the fibration $P(Y) \twoheadrightarrow Y \times_\Gamma Y$ also has this property.
Suppose that we have a commutative square as below, where $i \in S$.
\[ \xymatrix{ U \ar[r]^-h \ar[d]_i                  & P(Y) \ar@{->>}[d] \\
              V \ar[r]_-{\langle y_0, y_1 \rangle}  & Y \times_\Gamma Y
            } \]
We need to construct a lift in this square.

Let $U \amalg U \xrightarrow{[i_0^U,i_1^U]} C(U) \xrightarrow{s^U} U$ and $V \amalg V \xrightarrow{[i_0^V,i_1^V]} C(V) \xrightarrow{s^V} V$ be cylinder objects,
let $W = C(U) \amalg_{(U \amalg U)} (V \amalg V)$, and let $i' : W \to C(V)$ be the map described in the definition of the closure under coidentity types.
Consider the following diagram:
\[ \xymatrix{                            & U \ar[d] \ar[r]^i        & V \ar[d]                                             &                                            &                \\
                                         & U \amalg U \ar[r] \ar[d] & \po V \amalg U \ar[r]^-{[t_0, h]} \ar[d]             & P(Y) \ar@{->>}[r]^{p_1} \ar@{->>}[d]^{p_0} & Y \ar@{->>}[d] \\
              U \ar[r]^-{i^U_1} \ar[d]_i & C(U) \ar[r]              & \po C(U) \amalg_U V \ar[r] \ar[d] \ar@{-->}[ur]^{h'} & Y \ar@{->>}[r]                             & \Gamma         \\
              V \ar[rr]                  &                          & \po W \ar[r]_-{i'} & C(V) \ar[ur]                    &                                            &
            } \]
where the vertical map $U \to C(U)$ is $i_0^U$, the map $C(V) \to \Gamma$ is the composite $C(V) \xrightarrow{s^V} V \to \Gamma$,
the map $C(U) \to Y$ is the composite $C(U) \xrightarrow{s^U} U \xrightarrow{i} V \xrightarrow{y_0} Y$, and $t_0 : V \to P(Y)$ is the composite $V \xrightarrow{y_0} Y \to P(Y)$.
Since $V \amalg U \to C(U) \amalg_U V$ is a cofibration and $p_0 : P(Y) \twoheadrightarrow Y$ is a trivial fibration, we have a lift $h' : C(U) \amalg_U V \to P(Y)$.
The map $p_1 \circ h'$ extends to a map $h'' : W \to Y$ such that its composition with $V \to W$ equals to $y_1$:
\[ \xymatrix{                                      & P(Y) \ar@{->>}[r]^{p_1}            & Y \ar@{->>}[d] \\
              C(U) \amalg_U V \ar[d] \ar[ur]^{h'}  &                                    & \Gamma         \\
              W \ar[r]_-{i'} \ar@{-->}[uurr]^{h''} & C(V) \ar[ur] \ar@{-->}[uur]^{h'''} &
            } \]
Since $Y \twoheadrightarrow \Gamma$ is a fibration which has the right lifting property with respect to $S$, we have a lift $h''' : C(V) \to Y$ by the definition of $i'$.
Now, consider the following diagram:
\[ \xymatrix{ U \ar[r]^{i_1^U} \ar[d]_i & C(U) \ar[r]   & C(U) \amalg_U V \ar[r]^-{h'} \ar[d]_j                            & P(Y) \ar@{->>}[d] \\
              V \ar[r]                  & W \ar[r]_{i'} & C(V) \ar[r]_-{\langle y_0 \circ s, h''' \rangle} \ar@{-->}[ur]^q & Y \times_\Gamma Y
            } \]
The map $j$ is the composite of $C(U) \amalg_U V \to W$ and $i' : W \to C(V)$.
Both of these maps are cofibrations.
Moreover, $j$ is a weak equivalence.
To prove this, it is enough to prove that the map $V \to C(U) \amalg_U V$ is a weak equivalence since the map $i_0^V : V \to C(V)$ is a weak equivalence.
The map $V \to C(U) \amalg_U V$ is the pushout of $U \to C(U)$ along $i : U \to V$.
If $U$ is cofibrant, then $i_0^U : U \to C(U)$ is a trivial cofibration, so $V \to C(U) \amalg_U V$ is also a trivial cofibration.
If $\scat{C}$ is left proper, then $V \to C(U) \amalg_U V$ is a weak equivalence since it is a pushout of a weak equivalence along a cofibration.

Since $j$ is a trivial cofibration and $P(Y) \to Y \times_\Gamma Y$ is a fibration, we have a lift $q : C(V) \to P(Y)$ in the diagram above.
The map $V \to W \xrightarrow{i'} C(V) \xrightarrow{q} P(Y)$ is a lift in the original square.
\end{proof}

A \emph{cosimplicial resolution} of an object $X$ of $\scat{C}$ is a cofibrant replacement of the constant functor on $X$ in the Reedy model structure on $\scat{C}^\Delta$.
Let $\Delta$ be a fibrant cofibrant replacement of the terminal object of $\scat{C}^\Delta$.
That is, $\Delta$ is a functor $\Delta \to \scat{C}$ such that $\Delta^n \to 1$ is a trivial fibration for all $n$ and $\partial \Delta^n \to \Delta^n$ is a cofibration in $\scat{C}$.
Then $n \mapsto Q(X) \times \Delta^n$ is a cosimplicial resolution of $X$, where $Q(X)$ is any cofibrant replacement of $X$.

A \emph{cofibrant cosimplicial resolution} of a map $f : X \to Y$ consists of cosimplicial resolution of its domain and codomain and a cofibration between them such that the obvious square commutes.
For a class $S$ of maps of a model category, a class $\Lambda(S)$ of \emph{horns on $S$} \cite[Definition~3.3.8]{hirschhorn} is defined as the class of maps of the form
\[ X \otimes \Delta^n \amalg_{X \otimes \partial \Delta^n} Y \otimes \partial \Delta^n \to Y \otimes \Delta^n, \]
where $X \otimes \Delta^n \to Y \otimes \Delta^n$ is a cofibrant cosimplicial resolution of a map $X \to Y$ from $S$.

\begin{lem}[coidentity-lambda]
Let $S$ be a class of maps of a model category $\scat{C}$.
Then any class of horns $\Lambda(S)$ on $S$ is closed under coidentity types and consists of cofibrations between cofibrant objects.
\end{lem}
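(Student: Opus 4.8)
The plan is to handle the two assertions separately, reducing both to standard facts about Reedy cofibrations and framings from \cite{hirschhorn}. Fix a map $f \colon X \to Y$ in $S$ together with the chosen cofibrant cosimplicial resolution of it, i.e.\ a Reedy cofibration $\widetilde{f} \colon \widetilde{X} \to \widetilde{Y}$ between Reedy cofibrant objects of $\scat{C}^{\Delta}$. Under the co-Yoneda identifications $X \otimes \Delta^n = \widetilde{X}^n$ and $X \otimes \partial \Delta^n = L_n \widetilde{X}$ (the latching object), where $\Delta^n$ and $\partial \Delta^n$ denote the simplicial $n$-simplex and its boundary, the horn in the statement is exactly the $n$-th relative latching map $i_n \colon \widetilde{X}^n \amalg_{L_n \widetilde{X}} L_n \widetilde{Y} \to \widetilde{Y}^n$ of $\widetilde{f}$.

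The assertion that each horn is a cofibration between cofibrant objects is then pure Reedy theory. Reedy cofibrancy of $\widetilde{X}$ and $\widetilde{Y}$ makes all of $\widetilde{X}^n$, $\widetilde{Y}^n$, $L_n\widetilde{X}$, $L_n\widetilde{Y}$ cofibrant and all latching maps cofibrations, so the codomain $\widetilde{Y}^n$ is cofibrant and the domain $\widetilde{X}^n \amalg_{L_n \widetilde{X}} L_n \widetilde{Y}$, a pushout of cofibrant objects along a cofibration, is cofibrant too; and $i_n$ is a cofibration because the relative latching maps of a Reedy cofibration are cofibrations.

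For closure under coidentity types I would invoke \rremark{coidentity}: it suffices to produce, for each horn $i \colon U \to V$, cylinder objects of $U$ and $V$ together with the associated codiagonal map $i'$ lying in $\Icof[(\Lambda(S) \cup \J)]$. Since $U$ and $V$ are cofibrant, I take the cylinders $C(U) = U \otimes \Delta^1$ and $C(V) = V \otimes \Delta^1$ given by tensoring with the $1$-simplex; these are genuine cylinder objects because the framing tensoring $\scat{C} \times \sSet \to \scat{C}$ is a left Quillen bifunctor, so $U \otimes \partial \Delta^1 \to U \otimes \Delta^1$ is a cofibration and $U \otimes \Delta^1 \to U$ a weak equivalence (and likewise for $V$). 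With these choices $i'$ is precisely the corner map $i \mathbin{\widehat{\otimes}} (\partial \Delta^1 \hookrightarrow \Delta^1)$.

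The key step is to recognize this corner map as a cellular construction on horns. Since $i = \widetilde{f} \mathbin{\widehat{\otimes}} (\partial \Delta^n \hookrightarrow \Delta^n)$, associativity of the framing bifunctor gives $i' = \widetilde{f} \mathbin{\widehat{\otimes}} \bigl( (\partial \Delta^n \to \Delta^n) \mathbin{\widehat{\times}} (\partial \Delta^1 \to \Delta^1) \bigr)$, where $\mathbin{\widehat{\times}}$ is the pushout-product in $\sSet$. The latter simplicial map is a monomorphism, and every monomorphism of simplicial sets lies in the saturated class generated by the boundary inclusions $\partial \Delta^m \hookrightarrow \Delta^m$; since $\widetilde{f} \mathbin{\widehat{\otimes}} (-)$ preserves pushouts, transfinite compositions, and retracts in the simplicial variable, it carries a cellular presentation of $(\partial\Delta^n \to \Delta^n) \mathbin{\widehat{\times}} (\partial\Delta^1 \to \Delta^1)$ to one of $i'$ built from the horns $\widetilde{f} \mathbin{\widehat{\otimes}} (\partial \Delta^m \to \Delta^m) \in \Lambda(S)$. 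Hence $i' \in \Icof[\Lambda(S)] \subseteq \Icof[(\Lambda(S) \cup \J)]$, and \rremark{coidentity} finishes the argument. The main obstacle I anticipate is the associativity $(\widetilde{X} \otimes K) \otimes L \cong \widetilde{X} \otimes (K \times L)$ used to collapse $i'$ into a single corner map: for a general model category a framing is only coherently associative, so one must either work with a simplicial frame (equivalently the Reedy structure on bicosimplicial objects) where this holds strictly, or run the cellular decomposition directly in the two simplicial coordinates. Everything else is routine.
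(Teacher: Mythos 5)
Your proposal is correct and takes essentially the same route as the paper: both reduce to \rremark{coidentity}, choose the cylinders obtained by tensoring with $\Delta^1$ in the simplicial coordinate, identify $i'$ with the corner map of the resolution $\widetilde{f}$ against the monomorphism $\partial(\Delta^n \times \Delta^1) \to \Delta^n \times \Delta^1$, and conclude that this map lies in $\Icof[\Lambda(S)]$ --- the paper by citing \cite[Corollary~16.3.11]{hirschhorn} and \cite[Proposition~16.4.5]{hirschhorn}, where you re-derive the same facts from Reedy theory and a cellular induction. The associativity issue you flag is resolved exactly as you suggest and as the paper does: the tensoring is never iterated, since one works directly with $Q(U) \otimes (K \times \Delta^1)$ in the single bifunctor $\scat{C}^\Delta \times \sSet \to \scat{C}$.
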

\begin{proof}
The class $\Lambda(S)$ consists of cofibrations between cofibrant objects by \cite[Corollary~16.3.11]{hirschhorn}.
Let $i : Q(U) \otimes \Delta^n \amalg_{Q(U) \otimes \partial \Delta^n} Q(V) \otimes \partial \Delta^n \to Q(V) \otimes \Delta^n$ be a map in $\Lambda(S)$.
Note that $(X \otimes K) \amalg (X \otimes K) \simeq X \otimes (K \times \partial \Delta^1)$ and that $X \otimes (K \times \Delta^1)$ is a cylinder object for $X \otimes K$.
The second statement follows from the first one and \cite[Lemma~16.4.4]{hirschhorn}.
This lemma and \cite[Proposition~16.5.6]{hirschhorn} imply that maps $Q(U) \otimes K \to Q(U) \otimes L$ and $Q(U) \otimes K \to Q(V) \otimes K$ are cofibrations between cofibrant objects for all cofibrations $K \to L$.
This implies that $Q(U) \otimes (\Delta^n \times \Delta^1) \amalg_{Q(U) \otimes (\partial \Delta^n \times \Delta^1)} Q(V) \otimes (\partial \Delta^n \times \Delta^1)$ is a cylinder object
for $Q(U) \otimes \Delta^n \amalg_{Q(U) \otimes \partial \Delta^n} Q(V) \otimes \partial \Delta^n$.

By \rremark{coidentity}, it is enough to prove that $i'$ belongs to $\Icof[S]$.
By \cite[Proposition~16.4.3]{hirschhorn}, $X \otimes -$ preserves colimits.
It follows that $i'$ is isomorphic to the map $Q(U) \otimes (\Delta^n \times \Delta^1) \amalg_{(Q(U) \otimes \partial(\Delta^n \times \Delta^1))} Q(V) \otimes \partial(\Delta^n \times \Delta^1) \to Q(V) \otimes (\Delta^n \times \Delta^1)$,
where $\partial(\Delta^n \times \Delta^1) = \partial \Delta^n \times \Delta^1 \amalg_{(\partial \Delta^n \times \partial \Delta^1)} \Delta^n \times \partial \Delta^1$.
Since $\partial(\Delta^n \times \Delta^1) \to \Delta^n \times \Delta^1$ is a cofibration, \cite[Proposition~16.4.5]{hirschhorn} implies that $i'$ belongs to $\Icof[S]$.
\end{proof}

\begin{remark}
Let $K$ be a class of objects of a type-theoretic model category $\scat{C}$.
A map $X \to Y$ of $\scat{C}$ is \emph{$K$-local equivalence} if the induced map of homotopy function complexes $\fs{map}(Y,W) \to \fs{map}(X,W)$ is a weak equivalence for every $W \in K$.
Let $\scat{C}'$ be the category with the same class of cofibrations as $\scat{C}$ and $K$-local equivalences as weak equivalences.
In general, $\scat{C}'$ is not a model category, but it is always a type-theoretic semi-fibration category.
Indeed, by \cite[Proposition~17.8.5]{hirschhorn} and \cite[Proposition~17.8.14]{hirschhorn}, the class of $K$-local equivalences is closed under function $\Lambda(-)$.
Now, \rprop{localized-fib}, \rlem{coidentity}, and \rlem{coidentity-lambda} imply that $\scat{C}'$ is a type-theoretic semi-fibration category.
\end{remark}

\begin{example}[cisinski]
If $F : \scat{B} \to \scat{C}$ is a left relative Quillen functor and $\scat{C}$ is a Cisinski model category, then 
the contextually indexed contextual category $F^*(\scat{C}_!)$ has dependent $\Hom$-types, unit types, $\Sigma$-types, and extensional identity types.
This example was discussed in \rexample{cisinski-simp}.
The only new part is that $F^*(\scat{C}_!)$ has extensional identity types which follows from the previous remark.
\end{example}

\section{Quasicategories}
\label{sec:quasicats}

In this section, we describe a contextually indexed contextual category based on quasicategories.

Consider the functor $\Id : \sSet_K \to \sSet_J$, where $\sSet_K$ is equipped with the Kan model structure and $\sSet_J$ is equipped with the Joyal model structure.
This functor is not a left Quillen functor, but it is a relative left Quillen functor.
To construct its relative right adjoint, we define an auxiliary notion.
Let $x : \Delta^1 \to X$ be a 1-simplex of a simplicial set $X$.
We will call such a simplex an \emph{edge} of $X$.
We will say that $x$ is invertible if it extends to a map $\bDelta^1 \to X$, where $\bDelta^1$ is the nerve of the groupoid with two objects and a unique arrow between any pair of objects.

For every simplicial set $X$, let $G(X)$ be the subset of $X$ consisting of simplices in which every edge is invertible.
We will denote the inclusion map $G(X) \to X$ by $\varepsilon_X$.
Clearly, $G : \sSet \to \sSet$ is a functor.
Moreover, it is a right adjoint to $\Id$ relative to Kan complexes.
Since $\varepsilon_X$ is a monomorphism, to prove this, it is enough to show that any map $K \to X$ factors through $\varepsilon_X$ if $K$ is a Kan complex.
Every edge $\Delta^1 \to K$ extends to a map $\bDelta^1 \to K$ since $\Delta^1 \to \bDelta^1$ is a trivial cofibration in the Kan model structure.
It follows that every simplex of the form $\Delta^n \to K \to X$ belongs to $G(X)$, that is $K \to X$ factors through $\varepsilon_X$.

We already saw that every edge of a Kan complex is invertible.
The following lemma shows that this is also true for objects of the form $G(Y)$:

\begin{lem}
For every simplicial set $X$, the following conditions are equivalent:
\begin{enumerate}
\item Every edge of $X$ is invertible.
\item $X$ has the right lifting property with respect to the map $\Delta^1 \to \bDelta^1$.
\item $G(X) = X$.
\item $X$ is isomorphic to $G(Y)$ for some simplicial set $Y$.
\end{enumerate}
\end{lem}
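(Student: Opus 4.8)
The plan is to prove the cycle $(1)\Rightarrow(3)\Rightarrow(4)\Rightarrow(1)$ together with the separate equivalence $(1)\Leftrightarrow(2)$. The equivalence $(1)\Leftrightarrow(2)$ is pure unwinding: since $\Delta^1\to\bDelta^1$ is a monomorphism, the terminal map $X\to 1$ has the right lifting property against it exactly when every edge $\Delta^1\to X$ admits an extension $\bDelta^1\to X$, which is the definition of every edge of $X$ being invertible.

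For $(1)\Rightarrow(3)$, I would recall that $G(X)$ is the simplicial subset consisting of those simplices $\sigma\colon\Delta^n\to X$ all of whose edges $\sigma|_{[ij]}\colon\Delta^1\to X$ are invertible. Assuming (1), every edge of $X$ is invertible, so every simplex $\sigma$ lies in $G(X)$, whence $G(X)=X$. The step $(3)\Rightarrow(4)$ is immediate: take $Y=X$.

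The substantive step is $(4)\Rightarrow(1)$, which is exactly the idempotency of $G$, i.e.\ the assertion that every edge of $G(Y)$ is invertible in $G(Y)$. Given an edge $e\colon\Delta^1\to G(Y)$, its image $\varepsilon_Y\circ e$ is a $1$-simplex of $Y$ lying in $G(Y)$, hence invertible in $Y$, so it extends to some $\tilde e\colon\bDelta^1\to Y$. The key observation --- and the crux of the whole proof --- is that $\bDelta^1$ itself satisfies condition (1): being the nerve of a groupoid, each of its edges $[ij]\hookrightarrow\bDelta^1$ is a morphism of a groupoid and so extends to an endofunctor, i.e.\ a map $\bDelta^1\to\bDelta^1$. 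Consequently, for every simplex $\tau\colon\Delta^n\to\bDelta^1$ each edge of $\tilde e\circ\tau$ is the $\tilde e$-image of an invertible edge of $\bDelta^1$ and hence is invertible in $Y$; thus $\tilde e\circ\tau$ lands in $G(Y)$. Therefore $\tilde e$ corestricts to a map $\hat e\colon\bDelta^1\to G(Y)$, unique since $\varepsilon_Y$ is a monomorphism, which extends $e$ and exhibits $e$ as invertible in $G(Y)$.

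I expect the main obstacle to be precisely this last step: upgrading invertibility in the ambient $Y$ to invertibility inside the subobject $G(Y)$. The entire difficulty is absorbed into the fact that $\bDelta^1$ is fixed by $G$ (all of its own edges are invertible), which is what allows the witnessing extension $\tilde e\colon\bDelta^1\to Y$ to be pushed into $G(Y)$ rather than merely into $Y$.
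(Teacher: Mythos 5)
Your proof is correct and takes essentially the same route as the paper: the same cycle of implications, with the crux of $(4)\Rightarrow(1)$ being that the witnessing extension $\bDelta^1 \to Y$ corestricts through $\varepsilon_Y$ to a map $\bDelta^1 \to G(Y)$, using that $\varepsilon_Y$ is a monomorphism to see the corestriction still extends the given edge. The only cosmetic difference is that you verify that every edge of $\bDelta^1$ is invertible directly from the groupoid structure (extension to an endofunctor of the indiscrete groupoid), whereas the paper invokes that $\bDelta^1$ is a Kan complex together with its earlier observation that any map from a Kan complex into $Y$ factors through $\varepsilon_Y$; the two justifications amount to the same thing.
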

\begin{proof}
The equivalence of the first two condition follows immediately from the definition of an invertible edge.
If every edge of $X$ is invertible, then every simplex of $X$ belongs to $G(X)$, so $G(X) = X$.
Clearly, $G(X) = X$ implies that $X$ is of the form $G(Y)$.
Thus, we just need to prove that every edge of $G(Y)$ is invertible.
Indeed, by definition of $G(Y)$, for every simplex $u : \Delta^1 \to G(Y)$, there is a map $v : \bDelta^1 \to Y$ such that the following square commutes:
\[ \xymatrix{ \Delta^1 \ar[r]^-u \ar[d]         & G(Y) \ar[d]^{\varepsilon_Y} \\
              \bDelta^1 \ar[r]_-v \ar@{-->}[ur] & Y
            } \]
Since $\bDelta^1$ is a Kan complex, $v$ factors through a map $\bDelta^1 \to G(Y)$, that is there is a diagonal arrow in the diagram above such that the bottom triangle commutes.
Since $\varepsilon_Y$ is a monomorphism, the upper triangle also commutes.
\end{proof}

Simplicial sets in which all edges are invertible often can replace Kan complexes.
For example, every categorical fibration with a Kan complex as a codomain is a Cartesian fibration.
We will prove in \rprop{G-Cart} that this is also true for categorical fibrations with a codomain in which all edges are invertible.
The general theory of localizations of model categories implies that a map between Kan complexes factors into a categorical trivial cofibration followed by a Kan fibration.
We will prove in \rprop{G-Kan} that this is also true for maps between simplicial sets in which all edges are invertible.
To prove these propositions, we need to define an auxiliary construction.

Let $S$ be the set of non-degenerate edges of a simplicial set $X$.
Then we define a simplicial set $b(X)$ as the following pushout:
\[ \xymatrix{ \coprod\limits_S \Delta^1 \ar[r] \ar[d]  & X \ar[d] \\
              \coprod\limits_S \bDelta^1 \ar[r]        & \po b(X)
            } \]
Note that $b(\Delta^1) = \bDelta^1$.
It is easy to see that $b : \sSet \to \sSet$ is a functor which preserves cofibrations.

\begin{lem}[b-cof]
Every map of the form $b(\Lambda^n_k) \to b(\Delta^n)$ is a categorical trivial cofibration.
\end{lem}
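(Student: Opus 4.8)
The plan is to show that $b(\iota)$, for the horn inclusion $\iota : \Lambda^n_k \hookrightarrow \Delta^n$, is simultaneously a monomorphism and a categorical (Joyal) weak equivalence; since the categorical cofibrations are exactly the monomorphisms, this identifies it as a categorical trivial cofibration. That $b(\iota)$ is a monomorphism is immediate from the already-noted fact that $b$ preserves cofibrations, so all the work is in the weak equivalence. The virtue of this route is that it treats inner and outer horns uniformly — the outer horns being the genuinely new case, since there $\iota$ itself is not categorically trivial.

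First I would record two facts about $b$. (a) For every $X$ the canonical map $X \to b(X)$ is a \emph{Kan} trivial cofibration: it is a pushout of $\coprod_S \Delta^1 \to \coprod_S \bDelta^1$, a coproduct of copies of $\Delta^1 \to \bDelta^1$, which the paper already observes is a Kan trivial cofibration, and Kan trivial cofibrations are stable under coproducts and pushouts. (b) Every edge of $b(X)$ is invertible: each non-degenerate edge of $X$ acquires an extension over $\bDelta^1$ from the glued cell, the edges contributed by the copies of $\bDelta^1$ extend inside those cells, and degenerate edges are always invertible; by the earlier lemma characterizing such simplicial sets this says $b(X)$ has the right lifting property against $\Delta^1 \to \bDelta^1$. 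Applying (a) to $X = \Lambda^n_k$ and $X = \Delta^n$, and using that $\iota$ is Kan-anodyne, the two-out-of-three property forces $b(\iota)$ to be a Kan weak equivalence between simplicial sets all of whose edges are invertible.

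It then remains to upgrade this Kan equivalence to a categorical one, which I would isolate as a key lemma: \emph{a Kan weak equivalence between simplicial sets in which every edge is invertible is a categorical weak equivalence}. Testing categorical equivalences by mapping into quasicategories $Q$, it suffices to identify, for a simplicial set $A$ with all edges invertible, the Joyal homotopy function complex $G(Q^A)$ (the core of the functor quasicategory) with the Kan homotopy function complex $(GQ)^A$ into the Kan complex $GQ$. Concretely I would prove $G(Q^A) = (GQ)^A$ as subobjects of $Q^A$: an $m$-simplex $g : A \times \Delta^m \to Q$ lies in $G(Q^A)$ iff it sends each edge of the form $(\text{vertex of } A,\ \text{edge of } \Delta^m)$ to an invertible edge, whereas it lies in $(GQ)^A$ iff it sends \emph{every} edge of $A \times \Delta^m$ to an invertible edge; these conditions coincide because a general edge $(e,\alpha)$ factors, through a canonical degenerate $2$-simplex of $A \times \Delta^m$, as a composite of an edge of the first type with the image of $e$ under $A \times \{j\} \to Q$, and that image is invertible precisely because $e$ extends over $\bDelta^1$ inside $A$. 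Granting this, a Kan equivalence $A \to B$ induces Kan equivalences $(GQ)^B \to (GQ)^A$ for every $Q$ (as $GQ$ is a Kan complex and $A,B$ are cofibrant), hence equivalences of the corresponding Joyal function complexes, hence is a categorical equivalence; applying this to $b(\iota)$ finishes the proof.

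The main obstacle is precisely the identification $G(Q^A) = (GQ)^A$ for all-edges-invertible $A$ — i.e. that the Joyal mapping space into $Q$ agrees with the Kan mapping space into the core $GQ$. Its verification quietly relies on two standard facts about quasicategories that I would invoke: equivalences in a functor quasicategory are detected pointwise, and equivalences are closed under composition. Everything else (stability of Kan trivial cofibrations, two-out-of-three, and the detection of weak equivalences via homotopy function complexes into fibrant objects) is routine.
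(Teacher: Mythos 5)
Your proof is correct, and it reaches the conclusion by a genuinely different route than the paper. Writing $\iota : \Lambda^n_k \to \Delta^n$ for the horn inclusion, the paper reduces the statement to contractibility: since $\Lambda^n_k$ and $\Delta^n$ are both weakly contractible, it suffices to show that $b(X)$ is categorically equivalent to $\Delta^0$ for every contractible $X$, which it does by passing to a Joyal fibrant replacement $R(b(X))$ and invoking Joyal's theorem that a quasicategory all of whose edges are equivalences is a Kan complex; every edge of $R(b(X))$ is a composite of edges of $b(X)$, which are equivalences by construction, so $R(b(X))$ is a contractible Kan complex and the lemma follows by 2-out-of-3. You instead isolate a reusable key lemma: a Kan weak equivalence between simplicial sets all of whose edges are invertible is a categorical equivalence, proved by identifying $G(Q^A)$ with $(GQ)^A$ inside $Q^A$ for each quasicategory $Q$; combined with your observations that $X \to b(X)$ is a Kan trivial cofibration (so $b(\iota)$ is a Kan equivalence by 2-out-of-3) and that every edge of $b(X)$ is invertible, this yields the result. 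The trade-off is as follows. The paper's argument is shorter, but its weakest step is the claim that every edge of $R(b(X))$ is a composite of edges coming from $b(X)$, which tacitly requires taking a fibrant replacement built from inner-horn fillings (or a separate argument about generation of the homotopy category); your argument avoids fibrant replacements entirely and, as you note, treats inner and outer horns uniformly. Moreover, your key lemma --- in effect, that the Kan and Joyal homotopy theories agree on simplicial sets with all edges invertible --- is of independent use and is close in spirit to \rprop{G-Cart} and \rprop{G-Kan} later in the paper. The cost is two extra standard inputs beyond what the paper uses: the detection of categorical equivalences by cores of mapping quasicategories (Joyal, Dugger--Spivak) and the pointwise detection of natural equivalences. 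Your auxiliary verifications (invertibility of all edges of $b(X)$, and the identification $G(Q^A) = (GQ)^A$ via the degenerate $2$-simplex factorization of a general edge $(e,\alpha)$ of $A \times \Delta^m$) are sound.
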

\begin{proof}
This map is a cofibration since $b$ preserves cofibrations.
To prove that it is a categorical equivalence, it is enough to show that if $X$ is a contractible simplicial set, then $b(X)$ is categorically equivalent to $\Delta^0$.
Let $R(b(X))$ be the fibrant replacement of $b(X)$ in the Joyal model structure.
Since $R(b(X))$ is contractible, it is enough to show that it is a Kan complex.
By \cite[Proposition~1.2.5.1]{lurie-topos}, to prove that $R(b(X))$ is a Kan complex, it is enough to show that every edge of $R(b(X))$ is an equivalence.
This follows from the fact that every edge of $R(b(X))$ is a composition of edges that belong to $b(X)$ and every edge that belongs to $b(X)$ is an equivalence by the definition of $b$.
\end{proof}

\begin{prop}[G-Cart]
If every edge of a simplicial set $Y$ is invertible, then every categorical fibration $p : X \to Y$ is a Cartesian fibration.
\end{prop}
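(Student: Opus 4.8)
The statement to prove is that a categorical fibration $p : X \to Y$ is a Cartesian fibration whenever every edge of $Y$ is invertible. Recall that a Cartesian fibration is an inner fibration $p$ with the property that every edge $\bar f : a \to b$ of $Y$, together with a vertex $x$ of $X$ lying over $b$, admits a $p$-Cartesian lift $f$ with target $x$. A categorical fibration is in particular an inner fibration (it lifts against the inner horn inclusions, which are categorical trivial cofibrations), so the first condition is automatic. The plan is therefore to produce, for each such $\bar f$ and $x$, a $p$-Cartesian lift. The idea is to obtain the lift from the invertibility of $\bar f$ via the isofibration property of $p$, and then to recognise it as Cartesian precisely because it is itself an invertible, hence an equivalence, edge of $X$.

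First I would construct the lift. Since every edge of $Y$ is invertible, $\bar f$ extends along the canonical edge $e : \Delta^1 \to \bDelta^1$ to a map $q : \bDelta^1 \to Y$ with $q \circ e = \bar f$. The inclusion $\{1\} \hookrightarrow \bDelta^1$ is a monomorphism, and it is a categorical equivalence, because $\bDelta^1$ is the nerve of the groupoid with two objects and a unique isomorphism between them and is thus categorically equivalent to $\Delta^0$; hence $\{1\} \hookrightarrow \bDelta^1$ is a categorical trivial cofibration. Since $p$ is a categorical fibration, the square with top edge $x : \{1\} \to X$, bottom edge $q : \bDelta^1 \to Y$, left edge $\{1\} \hookrightarrow \bDelta^1$, and right edge $p$ (which commutes, as $p(x) = \bar f(1) = q(1)$) has a diagonal filler $\tilde q : \bDelta^1 \to X$ with $p\tilde q = q$ and $\tilde q(1) = x$. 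Setting $f := \tilde q \circ e$ produces an edge of $X$ with $pf = \bar f$ and $f(1) = x$; moreover $f$ extends to $\tilde q : \bDelta^1 \to X$, so $f$ is an invertible edge of $X$, and in particular an equivalence, since its image in the homotopy category is an isomorphism.

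The remaining and main point is that this invertible lift $f$ is $p$-Cartesian. For any inner fibration every equivalence edge of the total space is a Cartesian edge \cite[Proposition~2.4.1.5]{lurie-topos}, and this holds with no fibrancy hypothesis on the total space, which matters here because $Y$ need not be fibrant and so $X$ need not be a quasicategory. Granting this, $f$ is $p$-Cartesian, and together with the previous paragraph every edge of $Y$ acquires a $p$-Cartesian lift with prescribed target, so $p$ is a Cartesian fibration. I expect the delicate step to be exactly this recognition of Cartesian-ness in the possibly non-fibrant setting; an alternative route that avoids invoking the equivalence criterion directly is to pull $p$ back along $q : \bDelta^1 \to Y$, obtaining a categorical fibration over the Kan complex $\bDelta^1$, which is a Cartesian fibration by the already-known case of a Kan-complex base, and then to observe that its Cartesian lift of $e$ is an equivalence (being Cartesian over an equivalence) whose image in $X$ is again an equivalence and hence $p$-Cartesian. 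Either route reduces the proposition to the single fact that equivalence edges are Cartesian; by contrast, the functor $b$ and \rlem{b-cof} are the tools I would reserve for the companion statement \rprop{G-Kan} rather than for this one.
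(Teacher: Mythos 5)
Your construction of the invertible lift is fine: $\{1\} \hookrightarrow \bDelta^1$ is indeed a categorical trivial cofibration, so the isofibration property of $p$ produces an edge $f$ over $\bar f$ with prescribed target that extends to $\bDelta^1$. The gap is the step you yourself flag as delicate and then assert: that an invertible edge of $X$ is $p$-Cartesian. HTT Proposition~2.4.1.5 is stated and proved for inner fibrations \emph{between $\infty$-categories}; here $Y$, and hence $X$, need not be a quasicategory, and your claim that the result ``holds with no fibrancy hypothesis on the total space'' is offered without proof. In fact that claim, in this setting, is essentially equivalent to the proposition you are trying to prove: it becomes true only \emph{after} one knows that $p$ is a Cartesian fibration (at which point Cartesian and locally Cartesian edges coincide). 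Using it as an ingredient is circular. Your alternative route has the same hole in a different place: pulling back along $q : \bDelta^1 \to Y$ and invoking the Kan-base case shows only that $f$ is \emph{locally} $p$-Cartesian (Cartesian-ness in a pullback does not push forward to Cartesian-ness for $p$, since the relevant lifting problems for $p$ involve simplices of $Y$ that do not factor through $\bDelta^1$). A locally Cartesian fibration need not be Cartesian.

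This is precisely why the paper's proof takes the longer road. It first shows $p$ is a locally Cartesian fibration by pulling back to $\bDelta^1$, where Lurie's results legitimately apply because the base is a Kan complex and the pulled-back total space is therefore a quasicategory; it then identifies the locally $p$-Cartesian edges with the invertible ones; and finally it verifies the hypothesis of HTT Proposition~2.4.2.8, namely that locally $p$-Cartesian edges are closed under composition. That last step is the real content: given a $2$-simplex of $X$ whose edges over $\Delta^{\{0,1\}}$ and $\Delta^{\{1,2\}}$ are invertible, one factors it through $\Delta^2 \amalg_{\Lambda^2_1} b(\Lambda^2_1)$, uses invertibility of the edges of $Y$ to factor the base through $b(\Delta^2)$, and lifts against the categorical trivial cofibration $\Delta^2 \amalg_{\Lambda^2_1} b(\Lambda^2_1) \to b(\Delta^2)$ supplied by \rlem{b-cof}. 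So the functor $b$ and \rlem{b-cof}, which you set aside as tools for \rprop{G-Kan}, are exactly what is needed to close the gap in this proposition as well.
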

\begin{proof}
Let $\Delta^1 \to Y$ be an edge of $Y$.
This map factors through a map $\bDelta^1 \to Y$.
Since $\bDelta^1$ is a Kan complex, the pullback of $p : X \to Y$ along $\bDelta^1 \to Y$ is a Cartesian fibration by \cite[Proposition~.3.3.1.8]{lurie-topos}.
It follows that $p : X \to Y$ is a locally Cartesian fibration.

Let us prove that an edge of $X$ is locally $p$-Cartesian if and only if it is invertible.
Let $e : \Delta^1 \to X$ be an edge of $X$.
The map $p \circ e$ factors through the map $\Delta^1 \to \bDelta^1$.
Let $p' : X' \to \bDelta^1$ be the pullback of $p$ along the map $\bDelta^1 \to Y$ and let $e' : \Delta^1 \to X'$ be the edge induced by $e$.
By \cite[Remark~2.4.1.12]{lurie-topos}, $e$ is a locally $p$-Cartesian if and only if $e'$ is locally $p'$-Cartesian.
By \cite[Proposition~2.4.2.8]{lurie-topos}, $e'$ is locally $p'$-Cartesian if and only if it is $p'$-Cartesian since $p'$ is a Cartesian fibration.
By \cite[Proposition~2.4.1.5]{lurie-topos}, $e'$ is $p'$-Cartesian if and only if it is invertible.
Thus, if $e$ is locally $p$-Cartesian, then $e'$ is invertible and hence $e$ is also invertible.
Conversely, suppose that $e$ is invertible.
Then $e$ factors through the map $\Delta^1 \to \bDelta^1$.
Let $f : \bDelta^1 \to X$ be the map through which it factors.
Let $p' : X' \to \bDelta^1$ be the pullback of $p$ along $p \circ f$.
Then $f$ induces a section $s : \bDelta^1 \to X'$ of $p'$.
It follows that $e'$ factors through $s$ and hence it is invertible.
Thus, $e$ is locally $p$-Cartesian.

By \cite[Proposition~2.4.2.8]{lurie-topos}, $p : X \to Y$ is a Cartesian fibration if locally $p$-Cartesian edges are closed under composition.
Let $s : \Delta^2 \to X$ be a simplex such that edges induced by $\Delta^{\{0,1\}} \to \Delta^2$ and $\Delta^{\{1,2\}} \to \Delta^2$ are invertible.
We need to show that the third edge is also invertible.
The map $s$ factors through the map $\Delta^2 \to \Delta^2 \amalg_{\Lambda^2_1} b(\Lambda^2_1)$.
Let $t : \Delta^2 \amalg_{\Lambda^2_1} b(\Lambda^2_1) \to X$ be the map through which it factors.
Since edges of $Y$ are invertible, the map $p \circ t$ factors through the map $i : \Delta^2 \amalg_{\Lambda^2_1} b(\Lambda^2_1) \to b(\Delta^2)$.
Thus, we have the following commutative square:
\[ \xymatrix{ \Delta^2 \amalg_{\Lambda^2_1} b(\Lambda^2_1) \ar[r]^-t \ar[d]_i   & X \ar[d]^p \\
              b(\Delta^2) \ar[r]                                                & Y
            } \]

To finish the proof, we need to show that this square has a lift.
Since $p$ is a categorical fibration and $i$ is a cofibration, we just need to prove that $i$ is a categorical equivalence.
Consider the sequence of arrows $b(\Lambda^2_1) \to \Delta^2 \amalg_{\Lambda^2_1} b(\Lambda^2_1) \xrightarrow{i} b(\Delta^2)$.
The first arrow is the pushout of the map $\Lambda^2_1 \to \Delta^2$ and hence it is a categorical equivalence.
The composite is a categorical equivalence by \rlem{b-cof}.
Thus, $i$ is a categorical equivalence by 2-out-of-3.
\end{proof}

\begin{prop}[G-Kan]
If edges of simplicial sets $X$ and $Y$ are invertible, then every map $X \to Y$ factors into a categorical trivial cofibration $X \to Z$ followed by a Kan fibration $Z \to Y$.
\end{prop}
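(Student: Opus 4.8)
The plan is to produce the factorization from the Kan model structure on $\sSet$ and then verify that its trivial cofibration is already a \emph{categorical} one. Concretely, I would factor the given map $f : X \to Y$ in the Kan model structure as an anodyne map $j : X \to Z$ followed by a Kan fibration $p : Z \to Y$. Since $p$ is exactly the Kan fibration we want and $j$ is automatically a monomorphism, everything reduces to showing that $j$ is a categorical equivalence.

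Before that, I would observe that $Z$ again has invertible edges. The map $\Delta^1 \to \bDelta^1$ is a monomorphism between weakly contractible simplicial sets, so it is a trivial cofibration in the Kan model structure; hence the Kan fibration $p$ has the right lifting property against it. Given any edge of $Z$, its image in $Y$ extends to a map $\bDelta^1 \to Y$ because $Y$ has invertible edges, and a lift along $p$ extends the original edge to $\bDelta^1 \to Z$. Thus every edge of $Z$ is invertible, which is the analogue in this setting of the Cartesian-fibration phenomenon of \rprop{G-Cart}.

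The heart of the argument is that $j$ is a categorical equivalence, and the key lemma I would prove is that a simplicial set $S$ with invertible edges has a Kan complex as its Joyal fibrant replacement $R(S)$. Indeed $\fs{Ho}(R(S)) \cong \fs{Ho}(S)$ is generated by the edges of $S$, and each such edge becomes invertible in $\fs{Ho}(S)$ since it extends to $\bDelta^1 \to S$; so $\fs{Ho}(R(S))$ is a groupoid and $R(S)$ is a Kan complex by \cite[Proposition~1.2.5.1]{lurie-topos} (exactly as in the proof of \rlem{b-cof}). Applying this to $X$ and to $Z$, the replacements $R(X)$ and $R(Z)$ are Kan complexes and the replacement maps are categorical---hence also Kan---equivalences. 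Two-out-of-three in the Kan model structure then forces $R(j) : R(X) \to R(Z)$ to be a Kan equivalence between Kan complexes; since the identity functor $\sSet_K \to \sSet_J$ is right Quillen, $R(j)$ is therefore a categorical equivalence. A final two-out-of-three in the Joyal model structure yields that $j$ itself is a categorical equivalence, so $j$ is a categorical trivial cofibration and $p$ is a Kan fibration, as required. The main obstacle is precisely this key lemma together with the careful comparison of Kan and categorical equivalences; once one knows that objects with invertible edges have Kan-complex replacements, the problem collapses to the already-understood case of maps between Kan complexes.
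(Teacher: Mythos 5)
Your proof is correct, but it takes a genuinely different route from the paper's. The paper never factors in the Kan model structure: it runs the small object argument on the custom set $J = \{b(\Lambda^n_k) \to b(\Delta^n)\}$, so that $X \to Z$ is a relative $J$-cell complex and hence a categorical trivial cofibration directly from \rlem{b-cof} (plus closure of categorical trivial cofibrations under pushouts and transfinite composition); invertibility of edges of $Z$ is then proved by transfinite induction on the cell structure, and $Z \to Y$ is shown to be a Kan fibration by pushing an arbitrary horn-lifting problem through $b(-)$, using invertibility of edges in both $Z$ and $Y$, and solving it by $J$-injectivity. You instead take the standard anodyne/Kan-fibration factorization, transfer invertibility of edges to $Z$ by lifting $\Delta^1 \to \bDelta^1$ against the Kan fibration (correct, since that map is a Kan trivial cofibration), and then do the real work a posteriori: your key lemma that a simplicial set with invertible edges has a Kan complex as its Joyal fibrant replacement, combined with two-out-of-three and the localization facts that categorical equivalences are Kan equivalences and that Kan equivalences between Kan complexes are categorical equivalences. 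Note that your key lemma is proved by the same groupoid-generation argument the paper uses inside \rlem{b-cof} (via \cite[Proposition~1.2.5.1]{lurie-topos}), so the two proofs share their combinatorial core; what differs is the packaging. Your route buys a reusable conceptual statement---simplicial sets with invertible edges are categorically equivalent to Kan complexes---and reduces the proposition to the model-categorical comparison of the Kan and Joyal structures, whereas the paper's route stays self-contained given \rlem{b-cof} (which it needs anyway for \rprop{G-Cart}) and produces $Z$ with an explicit cell structure. Two small points to tighten: the map $R(j)$ should be produced either by a functorial fibrant replacement or by lifting $X \to Z \to R(Z)$ against the Joyal trivial cofibration $X \to R(X)$; and $\fs{Ho}(R(S)) \cong \fs{Ho}(S)$ should be an equivalence of categories rather than an isomorphism, which suffices since being a groupoid is invariant under equivalence.
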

\begin{proof}
Let $J$ be the set of maps of the form $b(\Lambda^n_k) \to b(\Delta^n)$ for all $0 \leq k \leq n$.
Let $f : X \to Y$ be a map.
We can apply the small object argument to the set $J$ to factor $f$ into a map $X \to Z$ in $\Icell[J]$ followed by a map $Z \to Y$ in $\Iinj[J]$.
The class $\Icell[J]$ consists of relative $J$-cell complexes, that is transfinite compositions of pushouts of maps in $J$.

Let us prove that every edge of $Z$ is invertible.
By definition, $Z = X_\lambda$ for some ordinal $\lambda$.
We prove by induction on $\alpha$ that every edge in $X_\alpha$ is invertible.
Every edge in $X_0 = X$ is invertible by assumption.
If $\alpha$ is a limit ordinal, then every edge of $X_\alpha$ belongs to $X_\beta$ for some $\beta < \alpha$.
Thus, it is invertible by the induction hypothesis.
If $\alpha = \alpha' + 1$, then every edge of $X_\alpha$ belongs either to $X_{\alpha'}$ or to $b(\Delta^n)$ and it is invertible in both of these cases.

By \rlem{b-cof}, the map $X \to Z$ is a categorical trivial cofibrations.
Now, let us prove that the map $Z \to Y$ is a Kan fibration.
Consider a commutative square of the form
\[ \xymatrix{ \Lambda^n_k \ar[r] \ar[d] & Z \ar[d] \\
              \Delta^n \ar[r]           & Y
            } \]
Since every edge of $Z$ is invertible, the map $\Lambda^n_k \to Z$ factors through the map $\Lambda^n_k \to b(\Lambda^n_k)$.
Let $D$ be the pushouts of $\Lambda^n_k \to b(\Lambda^n_k)$ and $\Lambda^n_k \to \Delta^n$.
The map $D \to Y$ factors through the map $D \to b(\Delta^n)$ since every edge of $Y$ is invertible.
Thus, we can define a lift as follows:
\[ \xymatrix{ \Lambda^n_k \ar[r] \ar[d] & b(\Lambda^n_k) \ar[rr] \ar[d] &                                   & Z \ar[d] \\
              \Delta^n \ar[r]           & \po D \ar[r]                  & b(\Delta^n) \ar@{-->}[ur] \ar[r]  & Y
            } \]
\end{proof}

Now, we can prove that $G$ is a right relative Quillen functor:

\begin{lem}[G-fib]
The functor $G$ maps categorical fibrations to Kan fibrations.
\end{lem}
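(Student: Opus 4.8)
The plan is to show that $G(p) : G(X) \to G(Y)$ has the right lifting property against every horn inclusion $j : \Lambda^n_k \to \Delta^n$ with $0 \le k \le n$, which is exactly what it means to be a Kan fibration. The whole point is that, although $p$ is only a categorical fibration and hence need not fill outer horns, the auxiliary functor $b$ turns each horn inclusion into a map $b(\Lambda^n_k) \to b(\Delta^n)$ which is a \emph{categorical trivial cofibration} by \rlem{b-cof}, and a categorical fibration does lift against those. So I would reduce a horn-lifting problem for $G(p)$ to a lifting problem for $p$ along $b(j)$.

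Concretely, I would start from a commutative square with top $a : \Lambda^n_k \to G(X)$ and bottom $b : \Delta^n \to G(Y)$. Composing with the inclusions $\varepsilon$ gives a square over $p$ with top $\varepsilon_X a$ and bottom $\varepsilon_Y b$, commutative by naturality of $\varepsilon$. Since $a$ lands in $G(X)$, every edge in the image of $\varepsilon_X a$ is invertible, so this map factors as $\Lambda^n_k \xrightarrow{\eta} b(\Lambda^n_k) \xrightarrow{\alpha} X$, where $\eta$ is the canonical map $W \to b(W)$ and $\alpha$ records a choice of extension of each edge to $\bDelta^1$. For the same reason $\varepsilon_Y b$ has invertible-edge image.

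The main obstacle is producing the bottom map $\beta : b(\Delta^n) \to Y$ making the square over $p$ commute, that is with $\beta \circ b(j) = p\alpha$ and $\beta \circ \eta = \varepsilon_Y b$; the naturality identities only force $p\alpha$ and the sought $\beta$ to agree after precomposition with $\eta$, and $\eta$ is not an epimorphism. I would resolve this exactly as in the proof of \rprop{G-Kan}: the maps $\varepsilon_Y b : \Delta^n \to Y$ and $p\alpha : b(\Lambda^n_k) \to Y$ agree on $\Lambda^n_k$, so they glue to a map $\delta : D \to Y$ out of the pushout $D = \Delta^n \amalg_{\Lambda^n_k} b(\Lambda^n_k)$. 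Passing from $D$ to $b(\Delta^n)$ only adjoins a copy of $\bDelta^1$ along each non-degenerate edge of $\Delta^n$ not already present in $\Lambda^n_k$, and each such edge maps under $\varepsilon_Y b$ to an invertible edge of $Y$ because $b$ lands in $G(Y)$; choosing extensions for these edges factors $\delta$ through $D \to b(\Delta^n)$, yielding the required $\beta$.

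With the square over $p$ in hand, \rlem{b-cof} together with the hypothesis that $p$ is a categorical fibration gives a lift $\gamma : b(\Delta^n) \to X$ with $\gamma \circ b(j) = \alpha$ and $p\gamma = \beta$. I would then set $c = \gamma \circ \eta : \Delta^n \to X$. Every edge of $\Delta^n$ is invertible in $b(\Delta^n)$, so its $\gamma$-image is invertible in $X$; hence $c$ has invertible-edge image and factors through $G(X)$ as some $\bar c : \Delta^n \to G(X)$. Checking the two triangles is then a short diagram chase from $\gamma \circ b(j) = \alpha$, $p\gamma = \beta$, $\beta \circ \eta = \varepsilon_Y b$, and the fact that $\varepsilon_X$ and $\varepsilon_Y$ are monomorphisms, which exhibits $\bar c$ as the desired lift and shows that $G(p)$ is a Kan fibration.
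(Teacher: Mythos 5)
Your proof is correct, but it takes a genuinely different route from the paper's. The paper argues by a retract: it applies \rprop{G-Kan} to $G(f) : G(X) \to G(Y)$ (legitimate since both have invertible edges) to factor it as a categorical trivial cofibration $i : G(X) \to Z$ followed by a Kan fibration $p : Z \to G(Y)$, then lifts $\varepsilon_X$ along $i$ against the categorical fibration $f$ to get $j : Z \to X$, observes that $Z$ has invertible edges (being Kan-fibered over $G(Y)$), so $j$ factors through $\varepsilon_X$ as a retraction $k : Z \to G(X)$, and concludes that $G(f) = p \circ i$ is a retract of the Kan fibration $p$. You instead verify the right lifting property of $G(f)$ against each horn $\Lambda^n_k \to \Delta^n$ directly, converting the horn problem into a lifting problem for $f$ itself against $b(\Lambda^n_k) \to b(\Delta^n)$, which \rlem{b-cof} makes solvable; the gluing step through $D = \Delta^n \amalg_{\Lambda^n_k} b(\Lambda^n_k)$, the extension to $b(\Delta^n)$ using invertibility of edges of $G(Y)$, and the final factorization of the lift through $\varepsilon_X$ are all sound (your verification of the two triangles via monomorphy of $\varepsilon_X$, $\varepsilon_Y$ and naturality of $\eta$ goes through). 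In effect you re-prove the relative content of \rprop{G-Kan} inline rather than invoking it as a black box: your argument is more self-contained and exhibits the horn fillers explicitly, needing only \rlem{b-cof} and the $b$-construction, while the paper's is shorter given that \rprop{G-Kan} is already available and trades the hands-on filler construction for closure of Kan fibrations under retracts. Both arguments ultimately rest on the same mechanism, namely that $b$ turns arbitrary horns into categorical trivial cofibrations.
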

\begin{proof}
Let $f : X \to Y$ be a categorical fibration.
By \rprop{G-Kan}, the map $G(f) : G(X) \to G(Y)$ factors into a categorical trivial cofibration $i : G(X) \to Z$ followed by a Kan fibration $p : Z \to G(Y)$.
Since $f$ is a categorical fibration, we have a lift in the following diagram:
\[ \xymatrix{ G(X) \ar[r]^-{\varepsilon_X} \ar[d]_i & X \ar[dd]^f   \\
              Z \ar[d]_p \ar@{-->}[ur]^j            &               \\
              G(Y) \ar[r]_-{\varepsilon_Y}          & Y
            } \]
Since $p$ is a Kan fibration and every edge of $G(Y)$ is invertible, this is also true for $Z$.
It follows that the map $j : Z \to X$ factors through $\varepsilon_X$.
Let $k : Z \to G(X)$ be the map through which it factors.
Since $\varepsilon_X \circ k \circ i = j \circ i = \varepsilon_X$ and $\varepsilon_X$ is a monomorphism, $k \circ i = \id_{G(X)}$.
Also $\varepsilon_Y \circ p \circ i \circ k = f \circ j \circ i \circ k = f \circ \varepsilon_X \circ k = f \circ j = \varepsilon_Y \circ p$.
Since $\varepsilon_Y$ is a monomorphism, $p \circ i \circ k = p$.
Thus, $G(f) = p \circ i$ is a retract of $p$:
\[ \xymatrix{ G(X) \ar[r]^i \ar[dr]_{G(f)}  & Z \ar[d]^p \ar[r]^k   & G(X) \ar[ld]^{G(f)} \\
                                            & G(Y)                  &
            } \]
Since $p$ is a Kan fibration, $G(f)$ is also a Kan fibration.
\end{proof}

Now, we are ready to prove that $\Id^*((\sSet_J)_!)$ has various categorical constructions:

\begin{thm}
Let $\Id : \sSet_K \to \sSet_J$ be the identity functor, where $\sSet_K$ is equipped with the Kan model structure and $\sSet_J$ is equipped with the Joyal model structure.
Then the contextually indexed contextual category $\Id^*((\sSet_J)_!)$ is locally small and Cartesian closed.
Moreover, it has $\Pi$-types in the empty context and dependent $\Hom$-types.
It also has $\Sigma$-types, unit types, extensional identity types, dependent products, stable dependent coproducts, strict initial types, stable binary coproducts, unstable pushouts, and (indexed) extension types with respect to cofibrations.
\end{thm}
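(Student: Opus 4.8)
The plan is to feed each listed construction into the matching general result of the earlier sections, specialized to $F = \Id : \sSet_K \to \sSet_J$ and $\scat{C} = \sSet_J$. First I would record the standing facts. Since $\sSet$ is a topos and the Joyal structure has the monomorphisms as cofibrations, $\sSet_J$ is a Cisinski model category in which every map (in particular every fibration) is exponentiable, so by the example following \rdefn{ttsfc} it is a type-theoretic semi-fibration category; crucially it is \emph{not} right proper, which is why the relative machinery built around $G$ is needed. The functor $\Id$ is a left relative Quillen functor whose relative right adjoint $G$ (relative to Kan complexes) preserves fibrations by \rlem{G-fib}. Finally, let $\scat{C}'\subseteq\sSet_J$ be the full subcategory of simplicial sets all of whose edges are invertible, with the counit inclusion $\varepsilon_X : G(X)\to X$; because every base context is a Kan complex and any map out of a Kan complex factors through $\varepsilon_X$ (uniquely, hence naturally, as $\varepsilon_X$ is a monomorphism), $(\scat{C}',\varepsilon)$ supplies the factorization data demanded by \rlem{cartesian-closed} and \rlem{unstable-pushouts}.

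Several of the listed items then follow by citation. Unit types, $\Sigma$-types, dependent $\Hom$-types, local smallness, and extensional identity types all come from \rexample{cisinski}, applied to the left relative Quillen functor $\Id$ into the Cisinski category $\sSet_J$. Strict initial types follow from \rprop{initial}: the initial object $\emptyset$ is strict because $\sSet$ is a topos, and it is fibrant since any Joyal trivial cofibration out of $\emptyset$ has empty codomain, hence it is quasifibrant. Stable binary coproducts follow from \rprop{binary-coproducts} by the argument of the example following it, the generating trivial cofibrations of $\sSet_J$ being chosen with connected domains, so that $A\amalg B\to\Gamma$ is again a categorical fibration. Indexed extension types along a cofibration $j$ follow from \rprop{indexed-extension-types}: $\Id$ preserves binary products, and $\sSet_J$ satisfies the pushout-product axiom with respect to $j$ because it is a Cartesian closed model category and $j$ is a monomorphism; the same observation applied to $1\amalg 1\to\bDelta^1$ furnishes the path types used below. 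Stable dependent coproducts follow from \rprop{coproducts}: its one nontrivial hypothesis, that the composite of a categorical fibration with an $\Id$-fibration be a quasifibration, holds cleanly because every $\Id$-fibration is a pullback of a Kan fibration and hence itself a Kan fibration, Kan fibrations are categorical fibrations, and categorical fibrations are closed under composition, so the composite is a categorical fibration, a fortiori a quasifibration.

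The remaining three items all reduce to a single kind of base-change stability for the non-right-proper Joyal structure. For Cartesian closedness (equivalently, $\Pi$-types in the empty indexed context) I apply \rlem{cartesian-closed}(2) with $\scat{C}'$ and $\varepsilon$ as above; its one outstanding hypothesis is that $\Pi_f$ preserves categorical fibrations when the base lies in $\scat{C}'$, equivalently that Joyal trivial cofibrations are stable under pullback along a categorical fibration with codomain in $\scat{C}'$. Having Cartesian closedness, \rlem{locally-small} re-derives dependent $\Hom$-types and local smallness via the fibration-preserving relative right adjoint $G$. For dependent products I apply \rprop{products}: $\Id$ preserves all pullbacks and every $\Id$-fibration is a Kan fibration, so the missing hypothesis is that $\Pi_g$ preserves categorical fibrations for $g$ a Kan fibration, i.e.\ that Joyal trivial cofibrations are stable under pullback along Kan fibrations. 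For unstable pushouts I apply \rlem{unstable-pushouts}, using the path types and the pair $(\scat{C}',\varepsilon)$; its closing criterion asks that Joyal trivial cofibrations be stable under pullback along pullbacks of categorical fibrations with codomain in $\scat{C}'$, the same statement as for Cartesian closedness.

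The main obstacle is therefore exactly these properness statements for the Joyal model structure: that base change preserves Joyal trivial cofibrations along Kan fibrations and along categorical fibrations whose codomain lies in $\scat{C}'$. The decisive tool is \rprop{G-Cart}: a categorical fibration whose codomain has only invertible edges is automatically a Cartesian fibration, and pullbacks of Cartesian fibrations are Cartesian, so the second statement reduces to the properness of Cartesian fibrations, while the first is the analogous properness of Kan fibrations. Verifying that pullback along these proper maps preserves categorical equivalences is where the real work lies; once it is in hand, all of the general propositions invoked above apply formally.
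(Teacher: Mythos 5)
Your proposal is correct in outline and takes essentially the same route as the paper: the same division of labor among \rexample{cisinski}, \rprop{initial}, \rprop{binary-coproducts}, \rprop{coproducts}, \rprop{products}, \rlem{cartesian-closed}, \rlem{locally-small}, \rlem{unstable-pushouts}, and \rprop{extension-types}/\rprop{indexed-extension-types}; the same choice of $\scat{C}'$ (simplicial sets with invertible edges) together with $\varepsilon_X : G(X) \to X$ and \rlem{G-fib}; and the same use of \rprop{G-Cart} to reduce the outstanding properness hypotheses to Cartesian fibrations. The one place you stop short is precisely the step you flag as ``where the real work lies'': that pullback along a Cartesian fibration preserves categorical equivalences. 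In the paper this requires no new work at all --- it is exactly \cite[Proposition~3.3.1.3]{lurie-topos} --- and the Kan-fibration case that you treat as a separate, analogous statement is subsumed by the same citation, since every Kan fibration is a right fibration and hence a Cartesian fibration (the paper uses this observation to dispatch \rprop{products}). So your reduction is complete and matches the paper's; but as written the proof is open exactly at its load-bearing step, and the thing to know is that this properness statement is a known, citable theorem of Lurie rather than something demanding a fresh argument. The remaining minor discrepancies (deriving dependent $\Hom$-types from \rlem{locally-small} after Cartesian closedness rather than directly from \rexample{cisinski}, and justifying closure of categorical fibrations under coproducts via connected domains of generating trivial cofibrations, which the paper simply asserts) are harmless and at the same level of rigor as the paper itself.
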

\begin{proof}
First, let us show that $\Id^*((\sSet_J)_!)$ has $\Pi$-types in the empty context.
This follows from \rlem{cartesian-closed}.
We can take $\scat{C}'$ to be the subcategory of $\sSet$ consisting of simplicial sets with invertible edges.
Let $\varepsilon_X$ be the map $\varepsilon_X : G(X) \to X$.
The last condition follows from the fact that $G$ is a right adjoint to $\Id$ relative to Kan complexes.
Thus, we just need to prove that $\Pi_f$ maps categorical fibrations over $A$ to categorical fibrations over $B$ for every categorical fibration $f : A \to B$ such that $B$ has invertible edges.
This is true if and only if categorical equivalences are closed under pullbacks along pullbacks of $f$.
By \rprop{G-Cart}, pullbacks of $f$ are Cartesian fibrations.
By \cite[Proposition~3.3.1.3]{lurie-topos}, categorical equivalences are closed under pullbacks along Cartesian fibrations.

The fact that $\Id^*((\sSet_J)_!)$ has dependent $\Hom$-types, $\Sigma$-types, unit types, and extensional identity types follows from \rexample{cisinski}.
The existence of dependent products follows from \rprop{products}.
Indeed, $\Pi_g$ maps categorical fibrations over $A$ to categorical fibrations over $B$ for every Kan fibration $g : A \to B$ since every Kan fibration is a Cartesian fibration.

The existence of strict initial types and stable binary coproducts follows from \rprop{initial} and \rprop{binary-coproducts} since the initial type is fibrant and fibrations are closed under coproducts in $\sSet_J$.
The existence of stable dependent coproducts follows immediately from \rprop{coproducts}.
The existence of unstable pushouts follows from \rlem{unstable-pushouts}.
Finally, (indexed) extension types with respect to cofibrations exist by \rprop{extension-types} and \rprop{indexed-extension-types}.
\end{proof}

\bibliographystyle{amsplain}
\bibliography{ref}

\end{document}